\newcommand{\ccr}{\ding{57}}
\def\R{\ensuremath\mathbb{R}}
\def\bG{\ensuremath\mathbb{G}}
\def\bS{\ensuremath\mathbb{S}}
\def\Z{\ensuremath\mathbb{Z}}
\def\F{\ensuremath\mathbb{F}}
\def\K{\ensuremath\mathbb{K}}
\def\D{\ensuremath\mathcal{D}}
\def\O{\ensuremath\mathcal{O}}
\def\X{\ensuremath\mathcal{X}}
\def\a{\ensuremath\alpha}
\def\b{\ensuremath\beta}
\def\g{\ensuremath\gamma}
\def\s{\ensuremath\sigma}
\def\Om{\ensuremath\Omega}
\def\P{\ensuremath\mathcal{P}}
\def\H{\ensuremath\mathbb{H}}
\def\L{\ensuremath\Lambda}
\def\M{\ensuremath\mathcal{M}}
\def\x{\ensuremath\times}
\def\tensor{\ensuremath\otimes}
\def\to{\ensuremath\rightarrow}
\def\sset{\ensuremath\subseteq}
\def\<{\ensuremath\langle}
\def\>{\ensuremath\rangle}
\def\D{\ensuremath\Delta}
\def\quotient#1#2{%
    \raise1ex\hbox{$#1$} \big /\lower1ex\hbox{$#2$}%
}
\DeclareMathAlphabet{\mathcalligra}{T1}{calligra}{m}{n}
\DeclareMathOperator{\rank}{rank}
\DeclareMathOperator{\im}{im}
\DeclareMathOperator{\Hom}{Hom}
\DeclareMathOperator{\aut}{Aut}
\DeclareMathOperator{\Sy}{Sym}
\DeclareMathOperator{\ind}{Ind}
\DeclareMathOperator{\pd}{proj.dim}
\newtheorem{thm}{Theorem}[section]
\newtheorem{lemma}[thm]{Lemma}
\newtheorem{prop}[thm]{Proposition}
\newtheorem{cor}[thm]{Corollary}
\theoremstyle{definition}
\newtheorem{defn}[thm]{Definition}
\newtheorem{conj}[thm]{Conjecture}
\newtheorem{ex}[thm]{Example}
\newcommand{\va}{\ensuremath{\mathbf{a}}\xspace}
\newcommand{\vg}{\ensuremath{\mathbf{g}}\xspace}
\newcommand{\vh}{\ensuremath{\mathbf{h}}\xspace}
\newcommand{\vk}{\ensuremath{\mathbf{k}}\xspace}
\newcommand{\vp}{\ensuremath{\mathbf{p}}\xspace}
\newcommand{\vs}{\ensuremath{\mathbf{s}}\xspace}
\newcommand{\vt}{\ensuremath{\mathbf{t}}\xspace}
\newcommand{\vu}{\ensuremath{\mathbf{u}}\xspace}
\newcommand{\vw}{\ensuremath{\mathbf{w}}\xspace}
\newcommand{\vx}{\ensuremath{\mathbf{x}}\xspace}
\newcommand{\vy}{\ensuremath{\mathbf{y}}\xspace}
\newcommand{\vz}{\ensuremath{\mathbf{z}}\xspace}
\newcommand{\sA}{\mathcal{A}}
\newcommand{\sB}{\mathcal{B}}
\newcommand{\sC}{\mathcal{C}}
\newcommand{\sD}{\mathcal{D}}
\newcommand{\sE}{\mathcal{E}}
\newcommand{\sF}{\mathcal{F}}
\newcommand{\sG}{\mathcal{G}}
\newcommand{\sH}{\mathcal{H}}
\newcommand{\sM}{\mathcal{M}}
\newcommand{\sL}{\mathcal{L}}
\newcommand{\sN}{\mathcal{N}}
\newcommand{\sQ}{\mathcal{Q}}
\newcommand{\sP}{\mathcal{P}}
\renewcommand{\S}{\mathcal{S}ph(S)}
\newcommand{\sT}{\mathcal{T}}
\newcommand{\sX}{\mathcal{X}}
\newcommand{\mA}{\mathfrak{A}}
\newcommand{\mD}{\mathfrak{D}}
\newcommand{\mH}{\mathfrak{H}\mathfrak{u}\mathfrak{l}\mathfrak{l}}
\newcommand{\vsub}{\rotatebox[origin=c]{90}{$\subseteq$}}
\newcommand{\Irr}{{\mathrm I}{\mathrm r}{\mathrm r}}
\newcommand{\IrA}{\Irr (\F A)}
\newcommand{\Ch}{{\mathrm C}{\mathrm s}{\mathrm h}_G(\sX_\bullet)}
\newcommand{\ChA}{{\mathrm C}{\mathrm s}{\mathrm h}_{G,A}(\sX_\bullet)}
\newcommand{\ChD}{{\mathrm C}{\mathrm s}{\mathrm h}_{G,A}(\sD_\bullet)}
\newcommand{\ChAc}{{\mathrm C}{\mathrm s}{\mathrm h}_{G,A,\chi}(\sX_\bullet)}
\newcommand{\dCh}{{{\mathrm C}{\mathrm s}{\mathrm h}_G^{\circ}(\sX_\bullet)}}
\newcommand{\dChA}{{{\mathrm C}{\mathrm s}{\mathrm h}_{G,A}^{\circ}(\sX_\bullet)}}
\newcommand{\dChAc}{{{\mathrm C}{\mathrm s}{\mathrm h}_{G,A,\chi}^{\circ}(\sX_\bullet)}}
\newcommand{\Dn}{{\mathrm D}{\mathrm i}{\mathrm a}{\mathrm g}_n}
\newcommand{\Sh}{{\mathrm S}{\mathrm h}_G(\sX_\bullet)}
\newcommand{\Cn}{{\mathrm C}_n}
\newcommand{\Sn}{{\mathrm S}_n}
\newcommand{\GL}{{\mathrm G}{\mathrm L}_n(\K)}
\newcommand{\PGL}{{\mathrm P}{\mathrm G}{\mathrm L}_n(\K)}
\newcommand{\dSh}{{{\mathrm S}{\mathrm h}_G^{\circ}(\sX_\bullet)}}
\def\Vl{\ensuremath\underline{\underline{V}}}
\def\Vt{\ensuremath\utilde{\utilde{V}}}
\def\VtI{\ensuremath\utilde{\utilde{V^\sG}}}
\def\VlI{\ensuremath\underline{\underline{V^\sG}}}
\def\VtC{\ensuremath\utilde{\utilde{V_\sG}}}
\def\VlC{\ensuremath\underline{\underline{V_\sG}}}
\begin{document}
\title[Kac-Moody Groups \& Cosheaves]{Kac-Moody Groups and Cosheaves on Davis Building}
\author{Katerina Hristova}
\address{Department of Mathematics, University of Warwick, Coventry, CV4 7AL, UK}
\email{K.Hristova@warwick.ac.uk}

\author{Dmitriy Rumynin}
\address{Department of Mathematics, University of Warwick, Coventry, CV4 7AL, UK\newline
\hspace*{0.31cm}  Associated member of Laboratory of Algebraic Geometry, National
Research University Higher School of Economics, Russia}
\email{D.Rumynin@warwick.ac.uk}

\thanks{The research was partially supported
  by the Russian Academic Excellence Project `5--100' and by Leverhulme Foundation.
  The authors are indebted to Inna Capdeboscq for constant attention to
  our efforts. The authors would like to thank Peng Xu for valuable lessons
  on the work of Schneider and Stuhler
and Robert Kurinczuk for valuable correspondence.}
\date{April 12, 2018}
\subjclass{Primary  20G44; Secondary 22D12}
\keywords{Kac-Moody group, smooth representation, building, Davis
realisation, Hecke algebra, projective resolution, projective dimension, duality}

\begin{abstract}
We investigate
smooth representations of complete Kac-Moody groups. 
We approach representation theory via geometry, in particular, 
the group action on the Davis realisation of its Bruhat-Tits building.
Our results include an estimate on projective dimension, localisation
theorem,
unimodularity and homological duality.
\end{abstract}

\maketitle

Our investigation of representation theory of Kac-Moody groups
aims to combine two known lines of inquiry. Bernstein in his 1992
lectures in Harvard \cite{Bern} proposed to look at representation
theory of $p$-adic groups through a geometric prism, \`{a} la Klein.
A $p$-adic group $H$ acts on a space, its Bruhat-Tits building $\sB\sT$.
A careful study of this action brings new, useful insights into
representation theory of $H$. 
This approach culminated in the 1997 seminal work 
by Schneider and Stuhler \cite{SS2} where they developed 
a systematic approach for passing from representations 
to equivariant objects on $\sB\sT$, an ultrametric rendition 
of the Beilinson-Bernstein localisation. 

The second line comes from the 2002 influential work by 
Dymara and Januszkiewicz. They 
pioneered a method for computing cohomology of 
a Kac-Moody group $G$ by
studying cohomology of $\sB\sT$   
and its Davis realisation $\sD$ \cite{DyTa}.

In the present paper we examine the smooth representations of a
Kac-Moody group $G$ by localising them over   
$\sD$. In a certain sense, we unify the two lines of inquiry described above.
A natural question is whether it is possible to use $\sB\sT$ rather than
$\sD$.
It is possible only for those Kac-Moody groups $G$ that are hyperbolic
in the following sense: 
any proper Dynkin subdiagram is of finite type.
In particular, affine Dynkin diagrams are hyperbolic,
so our results are applicable to algebraic groups over local fields
and their Bruhat-Tits buildings. 

Let us now explain the content of the present paper.
We strive to cover the correct generality in our results, the
generality where our proofs work. The price we pay for this
is that the different sections of the paper have different
assumptions.
Let us go section by section explaining our results and our
assumptions.

In Section 1 we collect useful results about
Haar measure on a locally compact totally disconnected topological group $G$.
Most of this section is covered by Vigneras' book \cite[I.1-I.2]{Vig}
but we find it essential to set up the notation and
review some facts for the benefit of the reader.
A criterion for unimodularity (Proposition~\ref{unimod_cr}) is new.
Another accessible source is the book by Bushnell and Henniart
\cite{LL}, although they
assume unimodularity and work in characteristic zero.

In Section 2 we keep the same assumptions on the group $G$ as in
Section 1, in particular, $G$ is not necessarily unimodular.
In perspective, we would like to cover 
the group $\GL$ over a local
field $\K$. When $\GL$ acts on $\sB\sT$, 
the stabilisers are not  compact,  
just compact modulo centre. So we choose a central subgroup $A$ of  $G$,
modulo which we can effectively describe geometry and representation
theory.  In particular, we introduce 
the abelian category $\M_A (G)$ of 
$A$-semisimple smooth representations of $G$
over a field $\F$.
We
show that $\M_A (G)$
is equivalent to a category of representations
of a Hecke algebra (Proposition~\ref{equiv2}).
The pay-off is existence of enough projectives in $\M_A (G)$
(Corollary~\ref{enough_pro}). 

We study these projectives in Section 3 and 
contemplate projective resolutions. 
If $(P_\bullet, d_\bullet)$ is a resolution of
the trivial module, then  
$(P_\bullet \otimes V, d_\bullet \otimes  I_V)$
is a projective resolution of any module $V$ 
(Lemma~\ref{projres}).
At this point we prove our first main theorem \`{a} la Bernstein
(Theorem~\ref{mainthm}): 
if $G$ acts on a contractible simplicial set $\sX_\bullet$,
the projective dimension of $\M_A (G)$ is bounded above by 
the dimension of $\sX_\bullet$.
Interestingly enough, we could not find a discussion of group action
on a simplicial set in the literature, so we feel compelled
to include some deliberations on this topic.

In Section 4
we adopt the assumptions coming from Theorem~\ref{mainthm}:
the group (as before) $G$ acts on a simplicial (not necessarily
contractible)
set $\sX_\bullet$.
We investigate $G$-equivariant cosheaves and sheaves 
(also known as coefficient systems in homology
and cohomology) on $\sX_\bullet$. 
We prove our second main theorem \`{a} la Schneider-Stuhler
(Theorem~\ref{local}). 
It is a localisation theorem clarifying the interface between 
$\M_A (G)$ 
and $G$-equivariant cosheaves on $\sX_\bullet$. 

Since $\M (G)$ is a noetherian category, a finitely generated module
admits a finitely generated projective resolution.
However, the resolution $(P_\bullet, d_\bullet)$
in Theorem~\ref{mainthm} is not finitely generated.
The goal of Section 5 is to chase a construction of a finitely generated
resolution.
Such resolution for $p$-adic algebraic groups is constructed
by Schneider and Stuhler by choosing a suitable cosheaf
on the Bruhat-Tits building $\sB\sT_\bullet$.
A convenient abstract machinery for assembling such a cosheaf
comprises systems of idempotents,
is introduced by Meyer and Solleveld \cite{MeSo}. 
Inspired by these two approaches,
we  propose a similar construction in Conjecture~\ref{cat0thm},
proving only the 1-dimensional case in Theorem~\ref{tree}.
We lack several crucial tools available to Schneider and Stuhler.
Firstly, the Davis 
building $\sD_\bullet$ 
of a general type is not as well behaved as an affine $\sB\sT_\bullet$.
Secondly, we lack Bernstein's Theorem that certain subcategories
of $\M_A (G)$ are closed under subquotients \cite[Th. I.3]{SS2}.
To overcome these difficulties, we propose to utilise the metric
properties of  
$|\sD|$, which  is a CAT(0)-space by Davis' Theorem.
This controls the assumptions of Section 5: we work with a locally compact
totally disconnected $G$ acting on a simplicial set $\sX_\bullet$
whose geometric realisation $|\sX|$ admits a CAT(0)-metric.

Our assumptions naturally evolve in Section 6.
We assume that $G$ is {\em a topological group of Kac-Moody type},
i.e., it admits a generalised BN-pair with certain topological properties.
The main result of the section is Theorem~\ref{open},
a description of the Davis building $\sD_\bullet$ for such a group $G$.
Consequently, all results from previous sections are applicable to $G$.
Another important result is Theorem~\ref{unimod}:
a topological group of Kac-Moody type is unimodular. 

Notice that the Davis building is often called 
{\em the Davis realisation of Bruhat-Tits building} in the literature.
Our terminology is justified:
$\sB\sT_\bullet$ and $\sD_\bullet$ are distinct simplicial sets.
They are homotopic if the Dynkin diagram has no connected components
of finite type, but they are both homotopic to a point in this case.
Both of them can be obtained from the same chamber system,
yet by different means. 
While
$\sD$ is intimately connected with $\sB\sT$,
we feel they are quite distinct objects.

The Kac-Moody groups emerge in   the penultimate Section 7.
Given a root datum $\mD$,
we explain how the corresponding Kac-Moody group over a finite field
$G_\mD (\F_q)$
leads to a topological group of Kac-Moody type.
Further details and proofs are available
in a paper by Capdeboscq and Rumynin \cite{CaRu}. 

The final  Section 8 has  similar assumptions to Section 6.
We initiate the study of the homological duality for smooth $G$-modules.
Origins of homological duality go back to Hartshorne \cite{Har}.
For $p$-adic groups the duality was first introduced by Bernstein
and Zelevinsky \cite{BernZ}.
In our approach we are influenced by the work of Yekutieli 
on the duality for modules over noncommutative rings
\cite{Yek}
as well as Bernstein's lecture notes \cite{Bern}.
We formulate two conjectures \ref{conj_H} and \ref{conj_IM}
on homological duality at the end of this paper. 
We will address these conjectures in future research.

\section{Haar Measure for Totally Disconnected Groups} \label{zero}
Let $G$ be a locally compact totally disconnected topological group.
If $K$ is a compact open subgroup, we can choose a left Haar measure
$\mu_K$
on $G$
with $\mu_K(K)=1$. We denote the modular function by 
$\Delta:G\rightarrow \R^\times_{>0}$.

Now let $I$ be the set of indices $|K:C|$
of all compact open subgroups $C\leq K$.
Let $\Z_{(K)}$ be the ring of fractions on $\Z$ obtained by inverting
all numbers $n\in I$.
\begin{lemma} (cf. \cite[Lemma 2.4]{Vig}) 
\label{mu_value}
If $A\subseteq G$ is a Borel set, then 
$\mu_K (A)\in \Z_{(K)}\cup\{\infty\}$. Moreover, $\Delta (\vx)\in \Z_{(K)}$ for all $\vx\in G$.
\end{lemma}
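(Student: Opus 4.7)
The plan is to establish the measure claim in stages—first for compact open subgroups, then for compact open subsets, and finally for arbitrary Borel sets—and to obtain the assertion on $\Delta$ as a direct corollary of the first stage. For the base case $A = L$ a compact open subgroup of $G$, the intersection $K \cap L$ is a compact open subgroup of $G$ contained in $K$, so by definition of $I$ we have $[K : K \cap L] \in I$ and therefore $\mu_K(K \cap L) = 1/[K : K \cap L] \in \Z_{(K)}$. Since $L$ is compact and $K \cap L$ is open in $L$, the index $[L : K \cap L]$ is finite; writing $L$ as the disjoint union of the corresponding left cosets of $K \cap L$ and invoking left invariance of $\mu_K$ yields
\[\mu_K(L) = [L : K \cap L] / [K : K \cap L] \in \Z_{(K)}.\]

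Left invariance immediately extends this to any left translate $xL$, and every compact open subset $U \subseteq G$ is a finite disjoint union of such cosets for a single compact open subgroup $L'$: around each $u \in U$ pick a compact open subgroup neighborhood with $uL_u \subseteq U$, pass to a finite subcover by compactness of $U$, and set $L' = \bigcap_i L_{u_i}$. Finite additivity then places $\mu_K(U)$ in $\Z_{(K)}$. To reach an arbitrary Borel $A$ I would combine outer regularity by open sets (which decompose as countable disjoint unions of compact open cosets thanks to total disconnectedness) with inner regularity by compact subsets (themselves finite disjoint unions of compact open cosets), organising everything so that finite measure forces the value into $\Z_{(K)}$ while infinite measure lands on $\infty$. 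I expect this last step to be the main obstacle, since $\Z_{(K)}$ is not closed under limits: one has to produce an explicit finite coset decomposition realising $\mu_K(A)$ up to a null set whenever $\mu_K(A) < \infty$, rather than merely approximating it.

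For the modular function, consider $K^x := x^{-1} K x$, which is compact open because conjugation by $x$ is a homeomorphism of $G$. By definition of $\Delta$ and left invariance,
\[\Delta(x) = \mu_K(Kx)/\mu_K(K) = \mu_K(x K^x) = \mu_K(K^x),\]
so the base case applied to $L = K^x$ yields $\Delta(x) = [K^x : K \cap K^x] / [K : K \cap K^x] \in \Z_{(K)}$, giving the second claim as a direct corollary of the first step.
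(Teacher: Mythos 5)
Your base case for a compact open subgroup $L$, the extension to left cosets and then to compact open subsets via finite additivity, and the computation $\Delta(\vx) = \mu_K(\vx K^\vx) = \mu_K(K^\vx)$ with $K^\vx = \vx^{-1}K\vx$ compact open are all correct, and this matches the paper's argument (the paper writes $\Delta(\vx)=\mu_K(K\vx)$ and implicitly uses $K\vx = \vx K^\vx$).

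Your suspicion about the passage to arbitrary Borel sets is well founded; in fact the gap you flag cannot be closed, because the Borel clause of the lemma is false as stated. Take $G=K=\Z_p$: then $\Z_{(K)}=\Z[1/p]$, but $(\Z_p,\mu_K)$ is measure-isomorphic to $([0,1],\text{Lebesgue})$, so Borel sets of every measure in $[0,1]$ exist, including irrational ones outside $\Z[1/p]$. The paper's own proof asserts that a Borel set ``is a disjoint union of left cosets of various compact open subgroups,'' which fails for non-open Borel sets, and even for open sets the decomposition is merely countable, and a countable sum of elements of $\Z_{(K)}$ need not remain in $\Z_{(K)}$. The repair you propose---a finite coset decomposition of $A$ up to a null set whenever $\mu_K(A)<\infty$---cannot exist when $\mu_K(A)$ is irrational, and the auxiliary claim that compact subsets are finite disjoint unions of compact open cosets is also false (a Cantor-type compact subset of $\Z_p$ need not be open). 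The good news is that downstream the paper uses the lemma only to integrate compactly supported locally constant functions, which are finite $\F$-linear combinations of indicators of compact open cosets; for that purpose the compact-open case, which you proved correctly and which is what the paper's proof actually establishes, is all that is needed, and the lemma should be read (or restated) with ``Borel'' replaced by ``compact open'' (or ``open of finite measure,'' with the caveat above).
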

\begin{proof}
The topology admits a basis at $e$ consisting of compact open subgroups
\cite[II.7.7]{HR}.
If $C$ is a compact open subgroup, then it is commensurable to $K$,
hence
$$
\mu_K (C) = \frac{|C:(C\cap K)|}{|K:(C\cap K)|} \in \Z_{(K)}.
$$
Since $A$ is a disjoint union of left cosets
of various compact open subgroups, 
$\mu_K (A)\in \Z_{(K)}\cup\{\infty\}$.
Finally, $\Delta (\vx) = \mu_K (K\vx)\in \Z_{(K)}$.
\end{proof}

Let $\F$ be a field of characteristic $p$ (possibly $p=0$)
equipped with the discrete topology.
We say that the field (or its characteristic) 
is {\em $K$-modular}, if $p$ divides the order $|K|$.
Similarly, it is {\em $K$-ordinary}, if $p$ 
does not divide $|K|$.
Recall that the order $|K|$ of a profinite group $K$
is a supernatural number
$\prod_p p^{n_p}$ with $n_p\in \{0,1,\ldots \infty \}$
that is the least common multiple of orders of $K/H$
for various open subgroups $H\leq K$.

A continuous function $\Theta: G\rightarrow \F$ is locally constant
and, consequently, smooth.
In fact, the sets of smooth functions, continuous functions and
locally constant functions coincide.

If  the characteristic $p$ is $K$-ordinary, then 
there is a natural ring homomorphism $\Z_{(K)}\rightarrow \F$.
Thus, by Lemma~\ref{mu_value}
we may think that the measure $\mu_K$
and the modular function $\Delta$ take values in $\F$.
In particular, 
given a compactly supported smooth function $\Theta:G\rightarrow \F$,
one can compute its integral 
$\int_G \Theta(\vx) \mu_K (d\vx) \in \F$. 

The $\F$-vector space 
$\sH=\sH(G,\F,\mu_K)$ 
of  
all compactly supported smooth functions 
is a commutative algebra under pointwise multiplication $\bullet$
and {\em the Hecke algebra} 
under the convolution product \cite{Vig,LL}:
$$
\Psi \star \Theta (\vx) = \int_G \Psi (\vy) \Theta (\vy^{-1}\vx) \mu_K (d\vy).
$$
This multiplication depends on the choice of
the compact open subgroup $K$ such that the field is $K$-ordinary.
If no such $K$ exists, there is no Hecke algebra as defined here.
If two such subgroups $K$ and $K^\prime$ are chosen,
the measures 
are scalar multiples
of each other:
$\mu_K = \alpha\mu_{K^\prime}$.
Hence, the corresponding Hecke algebras 
$(\sH,\star)$ and $(\sH,\star^\prime)$ are isomorphic:
$$
f(\Psi \star \Theta) = f(\Psi) \star^\prime f(\Theta)
\ 
\mbox{ where }
\
f(\Psi) = \alpha \Psi .
$$
The Hecke algebra $(\sH,\star)$ is associative but 
contains no identity unless $G$
is discrete.
The identity should be the delta-function at $e\in G$ but it is not
well-defined. 
Instead $\sH$ contains a family of idempotents approximating identity.
For a compact open subset $U$
we define a function
$\Lambda_U \in \sH$ by $\Lambda_U (\vx) = 0$ if
$\vx\not\in U$ and $\Lambda_U (\vx)= 1 / \mu_{K} (U)$
if
$\vx\in U$.
Now take a basis of topology at $e$ consisting of all compact open 
subgroups. Then the functions $\Lambda_K$
as $K$ runs over this basis of topology 
form 
a family of idempotents approximating identity.

It is convenient for computations when the group $G$ is unimodular.
If $G$ is not unimodular, 
the modular function shows up in 
the change of variables 
$\vy=\vx^{-1}$ 
\begin{equation}
\label{eq1}
\mu_K (d \vx) 
\xlongequal{\vy=\vx^{-1}}
\Delta (\vy) \mu_K (d \vy).
\end{equation}
Further properties of the modular function can be found
in the Vigneras' book \cite[I.2.7]{Vig}.
One of the following standard properties
\begin{itemize}
\item $G$ is compact modulo centre (in particular, compact),
\item $G$ is perfect (in particular, simple),
\item $G$ is second countable and admits a lattice,
\item $G$ admits a Gelfand pair (in particular, abelian) \cite[Prop 6.1.2]{vD}
\end{itemize}
ensures that the group $G$ is unimodular.
We finish with the following technical fact,
useful as a unimodularity criterion, which we will
use later in Theorem~\ref{unimod}:
\begin{prop}
\label{unimod_cr}
Consider a compact open subgroup $H$ of $G$ and $\vx\in G$.  Then
$$\Delta (\vx) \cdot |H: H \cap \vx^{-1}H\vx| = |H: H \cap \vx H\vx^{-1}|.$$
\end{prop}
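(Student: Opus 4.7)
The plan is to compute the Haar measure of the double coset $H\vx H$ in two different ways, exploiting the fact that a left Haar measure $\mu_K$ satisfies $\mu_K(S\vx) = \Delta(\vx)\,\mu_K(S)$ (which is consistent with the identity $\Delta(\vx) = \mu_K(K\vx)$ established in the proof of Lemma~\ref{mu_value}).

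First, I would decompose $H\vx H$ into a disjoint union of left cosets of $H$. The stabiliser in $H$ (acting from the left on $\vx H$) of the coset $\vx H$ consists of those $h\in H$ with $\vx^{-1} h \vx \in H$, i.e.\ $H \cap \vx H \vx^{-1}$. Hence
$$H\vx H = \bigsqcup_i h_i \vx H, \qquad \text{where } i \text{ indexes } H/(H \cap \vx H \vx^{-1}).$$
Taking measures and using left invariance of $\mu_K$,
$$\mu_K(H\vx H) = |H : H \cap \vx H \vx^{-1}| \cdot \mu_K(\vx H) = |H : H \cap \vx H \vx^{-1}| \cdot \mu_K(H).$$

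Next, I would decompose $H\vx H$ into a disjoint union of right cosets of $H$. Two cosets $H\vx h_1$ and $H\vx h_2$ coincide precisely when $h_1 h_2^{-1} \in H \cap \vx^{-1} H \vx$, so
$$H\vx H = \bigsqcup_j H\vx h_j, \qquad \text{where } j \text{ indexes } H/(H \cap \vx^{-1} H \vx).$$
Since $\mu_K(H\vx h_j) = \mu_K(H\vx)$ by left invariance, and $\mu_K(H\vx) = \Delta(\vx)\,\mu_K(H)$, we obtain
$$\mu_K(H\vx H) = |H : H \cap \vx^{-1} H \vx| \cdot \Delta(\vx) \cdot \mu_K(H).$$

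Equating the two expressions and cancelling $\mu_K(H)$ (which is finite and nonzero because $H$ is compact open) yields the claim. The only thing to watch out for is verifying that both indices are finite, but this is automatic since $H$ and any conjugate $\vx H \vx^{-1}$ or $\vx^{-1} H \vx$ are commensurable compact open subgroups; this also ensures $\mu_K(H\vx H) < \infty$, so the cancellation step is legitimate. I do not foresee a serious obstacle — the identity really is just bookkeeping of left versus right translation on a double coset, with the modular function accounting for the asymmetry.
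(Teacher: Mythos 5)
Your proof is correct and takes a genuinely different route from the paper's. The paper never touches the double coset $H\vx H$: it writes $\Delta(\vx)\mu(H) = \mu(H\vx) = \mu(\vx^{-1}H\vx)$ (the second equality by left-translating by $\vx^{-1}$), applies the commensurability formula $\mu(C) = \frac{|C : C\cap H|}{|H : C\cap H|}\,\mu(H)$ from the proof of Lemma~\ref{mu_value} with $C = \vx^{-1}H\vx$, and then conjugates by $\vx$ to rewrite $|\vx^{-1}H\vx : H\cap\vx^{-1}H\vx|$ as $|H : H\cap\vx H\vx^{-1}|$. Your argument instead counts $\mu_K(H\vx H)$ once via its decomposition into left $H$-cosets and once via right $H$-cosets, which makes the index combinatorics more transparent, at the cost of introducing the double coset and verifying finiteness of the decomposition; the paper's version is shorter because it reuses a formula already established. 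One small but real misattribution in your write-up: the equality $\mu_K(H\vx h_j) = \mu_K(H\vx)$ is \emph{not} a consequence of left invariance, since $H\vx h_j$ is a right translate of $H\vx$. The correct justification is $\mu_K(H\vx h_j) = \Delta(\vx h_j)\mu_K(H) = \Delta(\vx)\Delta(h_j)\mu_K(H)$ together with $\Delta(h_j)=1$, the modular function being trivial on the compact subgroup $H$ (a continuous homomorphism from a compact group into $\R^\times_{>0}$ has trivial image). With that fix your argument is sound.
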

\begin{proof} 
Since $\mu (H)=\mu_K (H)$ is finite,
it suffices to observe that
$
\Delta (\vx) \mu (H)
=
\mu (H\vx)
=$
$$
=
\mu (\vx^{-1} H\vx)
=
\frac{|\vx^{-1}H\vx: H \cap \vx^{-1} H\vx|}{|H: H \cap \vx^{-1} H\vx|}
\cdot \mu(H)
=
\frac{|H: H \cap \vx H\vx^{-1}|}{|H: H \cap \vx^{-1} H\vx|} \cdot \mu(H).
$$
\end{proof}

\section{Category of Smooth Representations} 
\label{one}

We study representations of a locally compact totally disconnected
topological group $G$ over a field $\F$.   
A representation $(\pi, V)$ of $G$ is called \textit{smooth} if 
for all $v \in V$ there exists a compact open subgroup $K_v$ of $G$ such that $\pi(\vk)v=v$ for all $\vk \in K_v$. We denote the abelian category of all smooth representations of $G$ by $\M(G)$.

Fix a closed central subgroup $A\leq G$, which could be trivial. 
We want to study $A$-semisimple smooth representations of $G$. A simple representation of $A$
 is just a simple $\F$-representation of the group algebra $\F A$. Hence, it is determined by a field extension $\widetilde{\F} \supseteq \F$ and a character $\chi : A \rightarrow \widetilde{\F}^\times$ such that $\widetilde{\F}$ is generated as an $\F$-algebra by the image of $\chi$. We denote this representation by $\widetilde{\F}_\chi$ and the set of such characters by $\IrA$.

\begin{defn} An \textit{$A$-semisimple smooth representation} of $G$ 
is a smooth representation $(\pi, V)$ which is semisimple as a
representation of $A$. By $\M_A (G)$ 
we denote the abelian category of 
$A$-semisimple smooth representations of $G$. For each character $\chi\in\IrA$
we denote by $\M_{A,\chi} (G)$   
the  full subcategory  $\M_{A} (G)$ 
of those representations that are direct sums of 
$\widetilde{\F}_\chi$ as representations of $A$.
\end{defn}

Now let $H$ be a closed subgroup of $G$ with $A\leq H$.
Then $H$ is also locally compact and totally disconnected. 
There are several ways of inducing a representation 
from $H$ to $G$. We quickly recall them. 

Let $(\s, W) \in \M_A(H)$. 
Consider the $\F$-vector space $\widehat{W}$ of all 
$H$-equivariant functions 
$f: G \to W$. 
Equivariance means that
\begin{itemize}
\item[(i)] $f(\vh \vg)=\s(\vh)f(\vg)$, for all $\vh \in H$ 
and $\vg \in G$.
\end{itemize}
Consider the $\F$-vector subspace $\widetilde{W}\subseteq\widehat{W}$
of all ``smooth'' functions, i.e.,  
\begin{itemize}
\item[(ii)] $f \in \widetilde{W}$
if and only if  
there exists a compact
open subgroup $K_f$ of $G$ such that $f(\vg \vk)=f(\vg)$, 
for all $\vg \in G$ and $\vk \in K_f$.
\end{itemize}
Consider the homomorphism 
$\rho: G \to\mbox{Aut}_\F(\widehat{W})$ given by $[\rho(\vg)f](\vg')
= f(\vg'  \vg)$ for $\vg,\vg' \in G$ and $f \in \widehat{W}$. 
If $f \in \widehat{W}$ and $\va\in A$, then
$[\rho(\va)f](\vg)
= f(\vg\va)= f(\va\vg)=\s(\va)f(\vg)$ for all $\vg\in G$.
Writing $W=\oplus_i W_i$ as a direct sum of simple $A$-modules
$W_i = \widetilde{\F}_{\chi_i}$,
we can present  $f=\sum_i f_i$ as a sum of $A$-equivariant
smooth functions $f_i: G \to W_i$ so that  
$[\rho(\va)f](\vg)
= \sum_i \s(\va)f_i(\vg)= \sum_i [\chi_i(\va)f_i](\vg)$.
This proves that $(\rho, \widehat{W})$ is $A$-semisimple
(but not smooth). 
Its submodule $(\rho, \widetilde{W})$ is smooth and also
$A$-semisimple, hence it is in $\M_A(G)$. 
Following standard conventions in the literature, we call the pair 
$(\rho, \widetilde{W})$ the representation of $G$ 
\emph{smoothly induced by $\s$} and denote it  Ind$_{H} ^{G} (\s)$. 

If we restrict our attention to the subspace 
of $\widetilde{W}$ of compactly supported modulo $H$ functions, 
we obtain another representation of $G$ called 
\emph{compactly induced} and denoted by $c-\ind_H^G(\s)$.

The $\F H$-module $\F G \underset{\F H}{\otimes} W$ becomes an $\F
G$-module by setting $\vg (\vg' \tensor w) =\vg\vg' \tensor w$ for
$\vg, \vg' \in G$, $w \in W$. 
We call this representation of $G$ \emph{algebraically induced} and 
denote it $a-\ind_H^G(\s)$.
It is $A$-semisimple but not, in general, smooth.

Let $H$ be open, $A\leq H$. This guarantees smoothness of $a-\ind_H^G(\s)$. 
Now consider the map 
$\varphi:\F G \underset{\F H}{\tensor}  W \to
\text{Fun}_H (G_H, {}_{H}W)$ 
given by 
$$
\vg \tensor w \mapsto (f: \vg \vh^{-1} \mapsto \vh w),
\quad \vg \in G, \vh \in H, w \in W.  
$$
As $H$ is open,
$\varphi$ is an isomorphism from $a-\ind_H^G(\s)$ to $c-\ind_H^G(\s)$.
%
%
Let us summarise the observations above:
\begin{lemma} \label{functors}
  Let $G$ be a locally compact totally disconnected group.
  Suppose $H\geq A$ is a subgroup of $G$, closed and compact modulo $A$. The following hold:
\begin{enumerate}
\item $\ind_H^G$ and $c-\ind_H^G$ define functors from $\M_A(H)\ (\text{or}\ \M_{A,\chi}(H))$ to $\M_A(G)$
$ (\M_{A,\chi}(G)\ \text{correspondingly})$. 
\item In the case when $H$ is also open, $a-\ind_H^G$ also defines a functor from $\M_A(H)\ (\text{or}\ \M_{A,\chi}(H))$ to $\M_A(G)\ (\M_{A,\chi}(G)\  \text{correspondingly})$. 
\end{enumerate}
\end{lemma}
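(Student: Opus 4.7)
The bulk of the work has already been done in the paragraphs preceding the statement, so my plan is to organize those observations into a proof and fill in the small remaining gaps. For the object level of (1), the text establishes that $\widetilde W$ sits inside $\widehat W$, that $(\rho,\widetilde W)$ is smooth by definition of condition (ii) and $A$-semisimple by the decomposition $f=\sum_i f_i$ into $\chi_i$-isotypic summands, and thus lies in $\M_A(G)$. The compactly induced space is the $G$-stable subspace of $\widetilde W$ consisting of functions of compact support modulo $H$ (stability under right translation by $G$ is immediate, since translation does not enlarge the support modulo $H$). Being a subrepresentation of an object of $\M_A(G)$, it also lies in $\M_A(G)$.

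For the morphism level, given $\phi:(\s,W)\to(\s',W')$ in $\M_A(H)$, I would define $\ind_H^G(\phi)(f):=\phi\circ f$ for $f:G\to W$. The $H$-equivariance of $\phi\circ f$ follows from $\phi\circ\s=\s'\circ\phi$, smoothness is preserved since $K_f$ also works for $\phi\circ f$, compact support modulo $H$ is preserved since $\phi\circ f$ vanishes wherever $f$ does, and $G$-equivariance is the identity $[\rho'(\vg)(\phi\circ f)](\vg')=\phi(f(\vg'\vg))=\phi\circ[\rho(\vg)f](\vg')$. Functoriality ($\id\mapsto\id$ and composition) is then formal. Specialising to $\M_{A,\chi}(H)$: if every simple $A$-summand of $W$ is $\widetilde\F_\chi$, then in the decomposition $f=\sum f_i$ each $f_i$ takes values in $\widetilde\F_\chi$ and $\rho(\va)f=\chi(\va)f$, so $\widetilde W$ and its compactly supported subspace are $\chi$-isotypic as $A$-modules, placing them in $\M_{A,\chi}(G)$.

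For part (2), once $H$ is open the isomorphism $\varphi$ displayed before the lemma identifies $a\text{-}\ind_H^G(\s)$ with $c\text{-}\ind_H^G(\s)$, and a direct check shows $\varphi$ is natural in $W$, so functoriality transports from $c\text{-}\ind_H^G$ to $a\text{-}\ind_H^G$. The only mild subtlety, which I take to be the main (though still routine) point to verify carefully, is that $\varphi$ is well defined: one must check independence of the representative in the coset $\vg H$ when writing $\vg\vh^{-1}$, which uses the $H$-balanced relation $\vg\vh\otimes w=\vg\otimes\s(\vh)w$ in the tensor product. With that, the $\chi$-isotypic refinement of (2) is inherited from (1) via $\varphi$.
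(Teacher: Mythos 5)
Your proposal is correct and takes the same route as the paper, which in fact offers no separate proof at all: the lemma is preceded by the phrase ``Let us summarise the observations above,'' so the paper itself only asserts the object-level statements you recall. You have sensibly filled in what the paper leaves implicit, namely the action of the induction functors on morphisms, the routine checks (equivariance, smoothness, preservation of compact support, functoriality, the $\chi$-isotypic refinement), and the well-definedness and naturality of the isomorphism $\varphi$ for part (2); all of these checks are accurate.
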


\begin{lemma} \label{s-s}
Let $H$ be a subgroup of $G$, compact modulo
$A$. 
Suppose that the field $\F$ is
$H/A$-ordinary. 
Then the categories $\M_{A, \chi} (H)$ and $\M_A(H)$ are semisimple.
\end{lemma}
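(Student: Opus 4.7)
The plan is to adapt the classical Maschke argument to the totally disconnected setting, averaging over the compact quotient $H/A$.

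First I would reduce the statement for $\M_A(H)$ to that for $\M_{A,\chi}(H)$. Given $V\in\M_A(H)$, decompose $V$ as an $A$-module into isotypic components $V=\bigoplus_{\chi\in\IrA}V^\chi$. Since $A$ is central in $H$, each $V^\chi$ is $H$-stable, and every morphism in $\M_A(H)$ respects this decomposition, so $\M_A(H)=\prod_\chi\M_{A,\chi}(H)$ as abelian categories. It therefore suffices to prove that each $\M_{A,\chi}(H)$ is semisimple.

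Fix $\chi$ and a subrepresentation $W\hookrightarrow V$ in $\M_{A,\chi}(H)$. Since $A$ acts by scalars on $V$ and $W$, any $\widetilde{\F}$-linear projection $\pi_0:V\to W$ is automatically $A$-equivariant. I would then average $\pi_0$ over $H/A$, which is profinite by the compactness assumption, using the normalised Haar probability measure $\mu$:
$$
\pi(v) \;=\; \int_{H/A} h\,\pi_0(h^{-1}v)\,d\mu(h).
$$
The $A$-equivariance of $\pi_0$ together with the central character $\chi$ makes the integrand well defined as a function on $H/A$; translation invariance of $\mu$ gives $H$-equivariance of $\pi$; and $\pi|_W=\id_W$ is immediate. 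Then $\ker\pi$ is the desired $H$-stable complement to $W$, which automatically inherits membership of $\M_{A,\chi}(H)$ from $V$.

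The delicate point, and the place where the $H/A$-ordinariness hypothesis enters, is making sense of the integral in characteristic $p$. The plan is to exploit smoothness: since $v$ and the vectors in the image of $\pi_0$ are smooth, the integrand $h\mapsto h\pi_0(h^{-1}v)$ descends to a locally constant function on the compact group $H/A$, and so takes only finitely many values on a clopen partition $H/A=\bigsqcup U_i$. The integral reduces to the finite sum $\sum_i\mu(U_i)v_i$, where the $\mu(U_i)$ are reciprocals of indices $[H/A:KA/A]$ for compact open $K\leq H$. These divide the supernatural order $|H/A|$, hence are units in $\F$ exactly under the $H/A$-ordinary hypothesis. This last step is the main obstacle; once it is in place, the rest of the argument is a formal translation of the classical finite-group averaging into the totally disconnected setting.
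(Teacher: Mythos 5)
Your proof is correct and takes essentially the same approach as the paper: reduce to a single central character $\chi$, then Maschke-average an $\widetilde{\F}$-linear projection onto a subrepresentation over the compact quotient $H/A$, using the $H/A$-ordinariness hypothesis so that the resulting finite sum of measure values lies in $\F$. The only cosmetic difference is that the paper first observes (via smoothness and compactness mod $A$) that cyclic submodules are finite-dimensional and complements only those, expanding the integrand in a finite basis to check well-definedness, whereas you handle an arbitrary subrepresentation directly by appealing to local constancy of the integrand on the compact group $H/A$.
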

\begin{proof}
Let $V \in \M_A(H)$. Then by definition $V$ is $A$-semisimple and hence can be decomposed as $V=\underset{\chi}{\bigoplus} V_{\chi}$ with $V_\chi=\{ v \in V \ |\ \va v= \chi(\va)v \text{ for all }\va \in A \}$. In other words, $\M_A(H) = \underset{\chi}{\bigoplus} \M_{A, \chi}(H)$, so it is enough to prove the statement for $\M_{A, \chi}(H)$.

Let $V \in \M_{A, \chi}(H)$.
Then $V$ is an $\widetilde{\F}$-vector space with an $\widetilde{\F}$-linear $H$-action. Let $v \in V$. By smoothness there exists a compact open subgroup $K_v$ of $H$ such that $\vk v=v$ for all $\vk \in K_v$.
Let $V' \coloneqq \< Hv\>_{\widetilde{\F}}$.
Clearly, $H/AK_v$ is both compact and discrete.
Hence, $H/AK_v$ is finite and
$V'$ is a finite dimensional $\widetilde{\F}$-subspace of $V$.

We want to show that $V$ is $H$-semisimple.
It suffices to find a direct $\widetilde{\F}H$-complement in $V$
of a finite dimensional $H$-submodule $W$. 
Pick an $\widetilde{\F}$-linear projection $p: V \to W$. 
Since $W$ is finite dimensional, we can write 
$p(v)=\sum\limits_{i=1}^n p_i(v) e_i$ 
for some basis $e_1,\ldots ,e_n$ of $W$ and some
linear functions  $p_i: V \to \F$.

Pick a section $\vx\mapsto \dot{\vx}$ of the quotient
homomorphism $H\to H/A$.
Let $\mu=\mu_{H/A}$ be a Haar measure on $H/A$
with values in $\F$.
Define $\widehat{p}: V \to W$ by
$$ \widehat{p}(v) := \underset{H/A}{\int} \dot{\vx}^{-1} p( \dot{\vx} v) \mu(d\vx). $$

The map $\widehat{p}$ is well-defined:  
write $\dot{\vx}^{-1} p( \dot{\vx} v) = \sum\limits_{i} \sum
\limits_{j} \psi_{ij}(\vx^{-1})\varphi_j(\vx) e_i$ 
for some $\psi_{ij}, \varphi_i \in C^{\infty}(H, \widetilde{\F})$,
then integrate the functions.

Clearly, $\widehat{p}$ is a well-defined 
$\widetilde{\F}$-linear projection. Let us verify
that $\widehat{p}(\vy v) = \vy \widehat{p}(v)$ for all $\vy\in H$,
$v\in V$. Let $\overline{\vy}=\vy A\in H/A$. For the standard argument
we need a change of variable $\vz=\vx \overline{\vy}$. The group 
$H/A$ is compact, hence, unimodular and $\mu(d\vz)=\mu(d\vx)$.
Then $\dot{\vx}\vy = \va_\vx \dot{\vz}$ for some element $\va_\vx\in A$
depending on $\vx$ (we think that $\vy$ is fixed).
Furthermore, $\dot{\vx}^{-1} =  \va_\vx^{-1} \vy \dot{\vz}^{-1}$ and 
$$ \widehat{p}(\vy v) 
= \underset{H/A}{\int} \dot{\vx}^{-1} p( \dot{\vx} \vy v) \mu(d\vx)
= \underset{H/A}{\int} \va_\vx^{-1} 
\vy \dot{\vz}^{-1} p( \va_\vx \dot{\vz} v) \mu(d\vz)
=\vy \widehat{p}(v).
$$ 
The last equality holds because $\va_\vx$ acts via the scalar
$\gamma (\va_\vx) \in \widetilde{\F}$ and $p$ is 
$\widetilde{\F}$-linear.
This yields a decomposition $V=W \oplus \ker(\widehat{p})$, 
finishing the proof.
\end{proof}

If $A$ is trivial and hence $H$ is compact, then the category $\M(H)$ of smooth representations of $H$ is semisimple.

The Hecke algebra $\sH=\sH (G,\F, \mu_K)$, defined in the last section
is a $G-G$-bimodule, smooth on both left and right.
We turn these into two commuting with each other structures
of a left $G$-module:
$$
\,^\vx\psi (\vy) = \psi (\vx^{-1}\vy), \ \ 
\psi^\vx (\vy) = 
\psi (\vy\vx).
$$

Let $(M,*)$ be an $\sH$-module.  
$M$ is called \emph{smooth} if $\sH * M=M$. This is equivalent to saying that for every $m \in M$ there exists a compact open subgroup $K$ of $G$ such that $\Lambda_K * m=m$. All smooth $\sH$-modules form a category which we denote by $\M(\sH)$.

\begin{prop} \cite[I.4.4]{Vig}(cf. \cite[1.4.2]{LL}) 
  \label{equiv1}
  If $\sH$ exists (which follows from existence of a compact
  open subgroup $H$ such that the field $\F$ is $H$-ordinary),
  then the functor
  $$\sF: \M(G) \to \M(\sH), \
  \sF \Big( (\pi,V) \Big) \coloneqq (\varpi,V), \ 
\varpi(\Theta) v  
=  
\int_G \Theta(\vg) \pi(\vg) v \mu (d\vg)
$$
for all 
$\Theta \in \M(\sH)$, $v \in V$, 
is an equivalence of categories.
\end{prop}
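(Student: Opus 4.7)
The plan is to verify the three standard steps in Vigneras's proof, adapting them to the non-unimodular, possibly positive-characteristic setting here.

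First, I check that $\sF$ is well-defined. Given $(\pi,V)\in\M(G)$ and a compactly supported smooth $\Theta$, I choose a compact open subgroup $K$ small enough so that $\Theta$ is right $K$-invariant and $\mu_K(K) = 1$. Since $\Theta$ has compact support, only finitely many cosets $\vg_i K$ contribute, and $\varpi(\Theta)v = \sum_i \Theta(\vg_i)\mu_K(\vg_i K)\,\pi(\vg_i)v$ is a finite $\F$-linear combination, so the integral is defined without requiring $\F$ to have any topology. The identity $\varpi(\Psi\star\Theta) = \varpi(\Psi)\varpi(\Theta)$ follows by swapping sums (Fubini on a finite sum) and applying the change of variables $\vy\mapsto\vy^{-1}\vx$ inside $\pi$. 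Smoothness of the $\sH$-module $V$ is immediate: if $v\in V$ is fixed by $K$, then a direct computation gives $\varpi(\Lambda_K)v = v$, using $\Lambda_K(\vx) = 1/\mu_K(K) = 1$ on $K$. Functoriality (the fact that $G$-equivariant $\F$-linear maps become $\sH$-linear) is a one-line check.

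Second, I construct a quasi-inverse $\sG:\M(\sH)\to\M(G)$. Given $(M,*)$, for $v\in M$ pick any compact open subgroup $K$ with $\Lambda_K * v = v$; then declare
$$
\pi(\vg)v := \Lambda_{\vg K} * v.
$$
The key technical computation is that $\Lambda_{\vg K}\star\Lambda_K = \Lambda_{\vg K}$ and, more generally, if $K'\leq K$ then $\Lambda_{\vg K'}*v = \Lambda_{\vg K}*v$ whenever $\Lambda_{K'}*v = \Lambda_K*v = v$; this gives independence of the choice of $K$. To see that $\pi$ is a group homomorphism, I choose a common $K$ fixing $v$ and such that $\Lambda_{\vg' K}*v$ is fixed by a suitably shrunk compact open subgroup, then verify $\Lambda_{\vg\vg' K} = \Lambda_{\vg K_1}\star \Lambda_{\vg' K}$ for an appropriate $K_1$ by direct convolution. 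Smoothness of $(\pi,M)$ is built in: $v$ is fixed by $K$. Functoriality is again routine, once one notes that $\sH$-linear maps commute with all $\Lambda_{\vg K}$ and hence with each $\pi(\vg)$.

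Third, I exhibit natural isomorphisms $\sG\circ\sF\cong\id$ and $\sF\circ\sG\cong\id$, in both cases on the underlying space taking the identity map. For $\sG\sF$: starting from $(\pi,V)$, computing the new action on a $K$-fixed $v$ gives $\Lambda_{\vg K}*v = \varpi(\Lambda_{\vg K})v = \pi(\vg)v$ by a direct integration, recovering the original $\pi$. For $\sF\sG$: starting from $(M,*)$ and a smooth function $\Theta$ supported on finitely many cosets $\vg_i K$ where $K$ fixes $v$, I split $\Theta = \sum_i \Theta(\vg_i)\mu_K(\vg_i K)\Lambda_{\vg_i K}$ and use additivity in $\Theta$ and $\pi(\vg_i)v = \Lambda_{\vg_i K}*v$ to recover the original $*$-action.

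The main obstacle, and the only place one has to be careful, is the absence of a unit in $\sH$ and the non-unimodularity of $G$: the inverse functor must be constructed via the approximating idempotents $\Lambda_K$, and one must show independence of the choice of $K$ as well as the multiplicativity $\pi(\vg\vg') = \pi(\vg)\pi(\vg')$ without invoking a unit. Both reduce to convolution identities of the form $\Lambda_{\vg K}\star\Lambda_{\vg' K'} = \Lambda_{\vg\vg' K'}$ whenever $K'\subseteq \vg'^{-1}K\vg'$, which I verify by an elementary calculation using the definition of $\star$; the characteristic hypothesis on $K$ (so that $\mu_K$ takes values in $\F$) is exactly what makes these scalars meaningful in $\F$. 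Once these convolution identities are in hand, everything falls into place.
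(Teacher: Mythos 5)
The paper does not give its own proof of this proposition; it simply cites Vigneras' book~\cite[I.4.4]{Vig}, so the comparison here is with the standard argument rather than a paper-specific one. Your plan reproduces that standard argument in structure, and most of your steps are sound, including the careful treatment of the independence of $K$ in the definition of $\pi(\vg)v = \Lambda_{\vg K}*v$. However, the convolution identity you isolate as ``the key technical computation'' is stated incorrectly, and as written the multiplicativity step would fail. You claim
$$
\Lambda_{\vg K}\star\Lambda_{\vg' K'} = \Lambda_{\vg\vg' K'}
\qquad\text{whenever } K'\subseteq \vg'^{-1}K\vg'.
$$
Taking $\vg=\vg'=1$ and $K'\subsetneq K$ gives the assertion $\Lambda_K\star\Lambda_{K'}=\Lambda_{K'}$, which is false: a direct computation shows $\Lambda_K\star\Lambda_{K'}=\Lambda_K$, since the support of $\Lambda_K\star\Lambda_{K'}$ is $KK'=K$, not $K'$. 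More generally, with your stated hypothesis $\vg'K'\vg'^{-1}\subseteq K$ the support of $\Lambda_{\vg K}\star\Lambda_{\vg' K'}$ is $\vg K\vg'$, which is a union of $|K:\vg'K'\vg'^{-1}|$ right $K'$-cosets, so equality with $\Lambda_{\vg\vg'K'}$ holds only when $K=\vg'K'\vg'^{-1}$. The correct identity needed for $\pi(\vg)\pi(\vg')=\pi(\vg\vg')$ puts the \emph{smaller} subgroup in the \emph{left} factor of the convolution:
$$
\Lambda_{\vg K'}\star\Lambda_{\vg' K} = \Lambda_{\vg\vg' K}
\qquad\text{whenever } K'\subseteq \vg'K\vg'^{-1},
$$
combined with the observation (a special case of the same formula with $\vg=1$) that $\Lambda_{\vg'K}*v$ is fixed by $\vg'K\vg'^{-1}$. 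With this correction the rest of your argument, including the explicit computation of $\sF\circ\sG\cong\id$ and $\sG\circ\sF\cong\id$, goes through. Note also that the issue has nothing to do with unimodularity, which you flag as the main obstacle: the error is purely in which factor absorbs the other, and the modular function never appears.
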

Using this functor $\sF$, 
we define
$$
\M_{A, \chi}(\sH) \coloneqq \overline{\sF (\M_{A, \chi}(G))}, \ \ 
\M_{A}(\sH) \coloneqq \overline{\sF (\M_{A}(G))},
$$
i.e., the full subcategories of objects isomorphic to
the objects $\sF \Big( (\pi,V) \Big)$
with $(\pi,V)$ from the corresponding subcategories.
The following statement is a tautology, yet we articulate it because it is an
important
stepping stone.
\begin{prop} \label{equiv2}
  If $\sH$ exists,
  then $\M_{A} (G)$ is equivalent to $\M_{A}(\sH)$.
\end{prop}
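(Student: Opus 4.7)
The plan is to observe that this statement is essentially a formal consequence of Proposition~\ref{equiv1} combined with the definition of $\M_A(\sH)$ as the essential image $\overline{\sF(\M_A(G))}$. Since the author flags it as a tautology, the proof should be short and conceptual, and I would present it in exactly this way.

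First I would recall that Proposition~\ref{equiv1} gives that $\sF:\M(G)\to\M(\sH)$ is an equivalence of categories, so in particular $\sF$ is fully faithful. Now $\M_A(G)$ is by definition a full subcategory of $\M(G)$, so the restriction $\sF|_{\M_A(G)}:\M_A(G)\to\M(\sH)$ is still fully faithful (fully faithful functors restrict to fully faithful functors on full subcategories: the hom-sets are unchanged on both sides).

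Next I would check essential surjectivity onto $\M_A(\sH)$. By construction, $\M_A(\sH)=\overline{\sF(\M_A(G))}$ is the full subcategory of $\M(\sH)$ whose objects are those isomorphic (in $\M(\sH)$) to some $\sF((\pi,V))$ with $(\pi,V)\in\M_A(G)$. So every object of $\M_A(\sH)$ is isomorphic to an object in the image of $\sF|_{\M_A(G)}$, which is exactly essential surjectivity. Together with fully faithful, this gives an equivalence of categories $\M_A(G)\simeq\M_A(\sH)$.

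I do not foresee any real obstacle: the only thing worth double-checking is that the hom-sets in $\M_A(\sH)$ really are those inherited from $\M(\sH)$ (i.e.\ that $\M_A(\sH)$ is taken to be a full subcategory), which is indeed how the author has defined it via $\overline{\sF(\cdot)}$. Once this is noted, the proposition is immediate and can be dispatched in a couple of lines.
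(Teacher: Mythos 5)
Your reasoning is correct and coincides with the paper's view: the paper gives no proof at all, simply remarking that the statement is a tautology, and your spelled-out argument (fully faithful restriction to a full subcategory, essential surjectivity by the very definition of $\M_A(\sH)$ as $\overline{\sF(\M_A(G))}$) is exactly the formal justification behind that remark.
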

Pick a module $V \in \M(G)$.
Its (skew) coinvariants $V_{A, \chi}$
is a module in 
$\M_{A, \chi}(G)$: 
$$
V_{A,\chi} \coloneqq \widetilde{\F}\otimes_{\F A} V,\ \
\mbox{where the ring homomorphism is}\ \ 
\chi: \F A \rightarrow \widetilde{\F}.
$$
Observe that if $V \in \M_{A, \chi}(G)$, then
$V$ is naturally a vector space over
$\widetilde{\F}$
and
$V$ and $V_{A,\chi}$ are naturally isomorphic.
Furthermore, the skew coinvariants define a functor
$$
\M(G)\longrightarrow \M_{A, \chi}(G), \
V \mapsto V_{A, \chi}, \ 
\varphi \mapsto \varphi_{A, \chi} = 1 \otimes \varphi ,  
$$
left adjoint to the inclusion functor  $\M_{A, \chi}(G)\longrightarrow
\M(G)$.
Applying equivalence $\sF$ and $\sG$ from Proposition~\ref{equiv1},
we get a corresponding skew invariants functor
$\M(\sH)\longrightarrow \M_{A, \chi}(\sH)$,
left adjoint to the inclusion functor  
$\M_{A, \chi}(\sH)\longrightarrow\M(\sH)$.
We can use these functors  
to show that $\M_{A,\chi}(\sH)$ has enough projectives. 

\begin{lemma} \label{enoughproj}
The category $\M_{A,\chi}(\sH)$ has enough projectives.
\end{lemma}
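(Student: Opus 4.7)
The plan is to exhibit enough projectives in the ambient category $\M(\sH)$ and then push them into $\M_{A,\chi}(\sH)$ along the skew coinvariants functor $(-)_{A,\chi}$, which was noted above to be left adjoint to the inclusion $i:\M_{A,\chi}(\sH)\hookrightarrow\M(\sH)$.

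First, $\M(\sH)$ has enough projectives via the standard construction for non-unital algebras with an approximate identity. For each compact open subgroup $K$ of $G$ for which $\F$ is $K$-ordinary, $\Lambda_K$ is an idempotent in $\sH$, and the left ideal $\sH\star\Lambda_K$ is a smooth projective $\sH$-module: the tensor-hom adjunction yields a natural isomorphism $\Hom_{\sH}(\sH\star\Lambda_K,M)\cong \Lambda_K\star M$ for smooth $M$, and $M\mapsto \Lambda_K\star M$ is exact on smooth modules (right exactness holds because if $\Lambda_K\star m''=m''$ lifts to $m\in M'$, then $\Lambda_K\star m$ still lifts $m''$ and is fixed by $\Lambda_K$). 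Given any $M\in\M(\sH)$ and any $m\in M$, smoothness furnishes $K$ with $\Lambda_K\star m=m$, so $\psi\star\Lambda_K\mapsto\psi\star m$ gives an $\sH$-linear map $\sH\star\Lambda_K\to M$ hitting $m$; summing over a generating set of $M$ produces the required surjection from a direct sum of projectives.

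Second, the inclusion $i$ is exact: a subrepresentation or quotient of a smooth $G$-representation that is $\chi$-isotypic over $A$ remains $\chi$-isotypic over $A$, so $\M_{A,\chi}(G)$ is closed under subobjects and quotients in $\M(G)$, and the equivalence $\sF$ of Proposition~\ref{equiv1} transports this closure to $\M_{A,\chi}(\sH)\subseteq\M(\sH)$. Thus for any projective $P\in\M(\sH)$, the natural isomorphism $\Hom_{\M_{A,\chi}(\sH)}(P_{A,\chi},-)\cong\Hom_{\M(\sH)}(P,i(-))$ combined with exactness of $i$ shows $P_{A,\chi}$ is projective in $\M_{A,\chi}(\sH)$. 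Finally, given any $M\in\M_{A,\chi}(\sH)$, pick a surjection $P\twoheadrightarrow i(M)$ with $P$ a direct sum of modules $\sH\star\Lambda_K$ as above; applying the right exact functor $(-)_{A,\chi}$ yields a surjection $P_{A,\chi}\twoheadrightarrow (i(M))_{A,\chi}\cong M$, the last isomorphism being the remark after Proposition~\ref{equiv2} that $V_{A,\chi}\cong V$ when $V$ is already $\chi$-isotypic. No serious obstacle arises; the argument is a formal consequence of the two ingredients above, and the only mild subtlety is verifying projectivity of $\sH\star\Lambda_K$ in the non-unital setting, which the idempotency of $\Lambda_K$ handles.
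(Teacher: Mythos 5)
Your proposal is correct and follows essentially the same route as the paper: both use the projectivity of $\sH\star\Lambda_K$ in $\M(\sH)$ (you verify it via tensor-hom adjunction with the idempotent, the paper cites Renard) and then push these projectives into $\M_{A,\chi}(\sH)$ by the skew coinvariants functor, exploiting that a left adjoint to an exact inclusion preserves projectives. The only cosmetic difference is that you spell out the projectivity of $\sH\star\Lambda_K$ and the exactness of the inclusion explicitly, where the paper is terser and cites a reference for the former.
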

\begin{proof}
For $N \in \M_{A,\chi}(\sH)$, $n\in N$ 
we can define a map $\varphi:\sH \Lambda_H \to N$ 
by $\varphi (\Theta \Lambda_H) = \Theta * n$
once we choose a compact open subgroup $H$ such that
$\Lambda_H * n = n$.
The corresponding map
$\varphi_{A,\chi}:(\sH \Lambda_H)_{A,\chi} \to N$ 
has $n$ in its image.

It remains to observe that $(\sH \Lambda_H)_{A,\chi}$ is projective.
The module $\sH \Lambda_H$ is projective in $\M(\sH)$ \cite[I.5.2]{Ren}.
Hence, $(\sH \Lambda_H)_{A,\chi}$ is projective in $\M_{A,\chi}(\sH)$
because a functor (coinvariants in our case), left adjoint 
to a right exact functor (the embedding  in our case) takes projective objects
to projective objects.
%
\end{proof}

\begin{cor}
\label{enough_pro}
If $\sH$ exists, the categories $\M_{A, \chi}(G)$ and $\M_{A}(G)$ have enough projectives.
\end{cor}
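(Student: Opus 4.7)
The plan is to reduce to the already-established case and to use the decomposition into character components. By Proposition~\ref{equiv2} (and its obvious $\chi$-variant, which follows from exactly the same argument, since $\sF$ restricts to an equivalence $\M_{A,\chi}(G)\simeq \M_{A,\chi}(\sH)$), the equivalence of abelian categories preserves projectivity. Hence Lemma~\ref{enoughproj} immediately transfers to give enough projectives in $\M_{A,\chi}(G)$: for any $V \in \M_{A,\chi}(G)$, a surjection $Q \twoheadrightarrow \sF(V)$ with $Q$ projective in $\M_{A,\chi}(\sH)$ pulls back, via a quasi-inverse $\sG$ of $\sF$, to a surjection $\sG(Q)\twoheadrightarrow V$ with $\sG(Q)$ projective in $\M_{A,\chi}(G)$.

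Next I would handle $\M_A(G)$ by using the character decomposition already observed in the proof of Lemma~\ref{s-s}: any $V \in \M_A(G)$ splits, as an $A$-module, as $V = \bigoplus_{\chi\in\IrA} V_{A,\chi}$, and each summand $V_{A,\chi}$ is in fact a $G$-subrepresentation (since $A$ is central, so $G$ permutes isotypic $A$-components, trivially, each to itself). Thus $V_{A,\chi}\in\M_{A,\chi}(G)$.

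For each $\chi$ with $V_{A,\chi}\neq 0$, pick a projective surjection $P_\chi \twoheadrightarrow V_{A,\chi}$ in $\M_{A,\chi}(G)$, and set $P \coloneqq \bigoplus_\chi P_\chi$. The direct sum is a smooth $A$-semisimple $G$-representation, hence belongs to $\M_A(G)$, and the induced map $P\twoheadrightarrow V$ is surjective. Projectivity of $P$ in $\M_A(G)$ follows from the fact that each $\M_{A,\chi}(G)$ sits in $\M_A(G)$ as a summand (the inclusion has left and right adjoints given by skew coinvariants/invariants), so $\Hom_{\M_A(G)}(P_\chi,-) = \Hom_{\M_{A,\chi}(G)}(P_\chi,(-)_{A,\chi})$ is a composition of exact functors, and the product of exact functors is exact.

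There is no real obstacle here; the only point requiring the smallest bit of care is to check that $V_{A,\chi}$ is a $G$-subrepresentation (immediate from centrality of $A$) and that the category-level decomposition $\M_A(G)=\bigoplus_\chi \M_{A,\chi}(G)$ is compatible with projectivity. Both are routine, so the corollary follows by combining Proposition~\ref{equiv2} with Lemma~\ref{enoughproj} and the $A$-isotypic decomposition.
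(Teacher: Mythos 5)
Your proposal is correct and takes essentially the same route as the paper: use the equivalence from Proposition~\ref{equiv2} (restricted to the $\chi$-component) to transfer Lemma~\ref{enoughproj} to $\M_{A,\chi}(G)$, then invoke the decomposition $\M_A(G)=\bigoplus_\chi \M_{A,\chi}(G)$ to handle the full category. The paper states both steps in one line each; you have simply supplied the routine verifications (that the $\chi$-isotypic component is a $G$-subrepresentation, and that the direct sum of projectives drawn from each summand category is projective in $\M_A(G)$), which the paper leaves implicit.
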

\begin{proof}
The statement about $\M_{A, \chi}(G)$ is immediate.
The category 
$\M_A(G)$ is a direct sum $\oplus_\chi\M_{A, \chi}(G)$, hence, $\M_A(G)$ also has enough projectives.
\end{proof}

\section{Projective Dimension and Actions on Simplicial Sets}
\label{one+}

Let us now investigate the projective dimension of the category $\M_A(G)$.
As we have seen in the previous section,
induction and compact induction are useful functors.
\begin{lemma} (cf. \cite[I.5.9]{Vig})
Let $G$ be a locally compact totally disconnected group.  
Suppose $H\geq A$ is a subgroup of $G$, closed and compact modulo $A$.
Then $\ind_H^G$ takes injective objects to injective objects.
If $H$ is open
then $c-\ind_H^G$ takes projective objects to projective objects.

Moreover, if the field $\F$ is
$H/A$-ordinary and $(\s, W) \in \M_A(H)$, 
then $\ind_H^G (\s)$ is an injective object
and $c-\ind_H^G(\s)$ is a projective object, as soon as $H$ is open.
\end{lemma}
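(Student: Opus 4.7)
The plan is to derive both preservation statements from Frobenius reciprocity together with the standard adjoint-functor principle: a right adjoint of an exact functor preserves injectives, and a left adjoint of an exact functor preserves projectives. First I would show that $\ind_H^G$ is right adjoint to the restriction functor $\M_A(G)\to\M_A(H)$, and that when $H$ is open $c-\ind_H^G$ is left adjoint to restriction. Restriction is visibly exact, since kernels, images and cokernels of $G$-maps are computed at the level of the underlying vector spaces, while smoothness and $A$-semisimplicity both pass trivially to the restricted $H$-module. This immediately yields the first two assertions.

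To establish the adjunctions, for smooth induction I would exhibit the familiar bijection
$$\Hom_G(V,\ind_H^G W) \ \longleftrightarrow\ \Hom_H(V|_H,W), \quad \varphi\mapsto\bigl(v\mapsto \varphi(v)(e)\bigr),$$
with inverse sending $\psi$ to the map $v\mapsto(\vg\mapsto \psi(\vg v))$. The only point requiring care is that the latter function actually lies in $\widetilde{W}$ rather than merely in $\widehat{W}$, which holds because any compact open $K_v\leq G$ stabilising $v$ also right-stabilises $\vg\mapsto\psi(\vg v)$. For $c-\ind_H^G$ with $H$ open, I would invoke the isomorphism $c-\ind_H^G\cong a-\ind_H^G$ already recorded in the excerpt, reducing the adjunction to the standard tensor-hom isomorphism
$$\Hom_G(\F G\otimes_{\F H} W,V) \ \cong\ \Hom_H(W,V|_H).$$

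For the ``moreover'' half, $H$ is compact modulo $A$ and $\F$ is $H/A$-ordinary, so Lemma~\ref{s-s} tells us that $\M_A(H)$ is semisimple. In a semisimple abelian category every object is simultaneously injective and projective; in particular $(\s,W)$ enjoys both properties. Combining this with the preservation statements above yields that $\ind_H^G(\s)$ is injective and, when $H$ is additionally open, that $c-\ind_H^G(\s)$ is projective. No step presents a genuine obstacle; the only mildly delicate check is the smoothness of $\vg\mapsto\psi(\vg v)$ in the Frobenius reciprocity for $\ind_H^G$, and this is immediate from the definition of a smooth representation.
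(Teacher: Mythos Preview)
Your proposal is correct and follows essentially the same route as the paper: Frobenius reciprocity identifies $\ind_H^G$ and $c-\ind_H^G$ as right and left adjoints of the exact restriction functor, whence preservation of injectives and projectives, and then Lemma~\ref{s-s} supplies the semisimplicity of $\M_A(H)$ needed for the final clause. If anything, you have spelled out the adjunction maps and the smoothness check more explicitly than the paper does.
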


\begin{proof}
  Frobenius reciprocity for $\ind_H^G$ tells us that it is right adjoint to Res$_H^G$ and since any right adjoint to a left exact functor takes injective objects to injective objects.
  Similarly, by Frobenius reciprocity for compact induction from open $H$,
  $c-\ind_H^G$ is left adjoint to the restriction functor Res$_H^G$, which is exact. Any such functor takes projective objects to projective objects.

In the case when $\F$ is $H/A$-ordinary,
$\M_A(H)$ is  semisimple,
hence  $(\s, W) \in \M_A(H)$ 
is a semisimple $H$-module. In other words, $W$ is both injective and projective.
We are done by the first part. 
\end{proof}

Observe that $a-\ind_H^G \cong c-\ind_H^G$ for an open $H$. 
Therefore, we can deduce the following:

\begin{cor}
\label{projective}
Let $G$ be a locally compact totally disconnected group.
Suppose $H\geq A$ is a subgroup of $G$, open and compact modulo $A$.
Further suppose that the field $\F$ is $H/A$-ordinary. 
If $(\s, W)$ is a representation in $\M_A(H)$, then $\F G \underset{\F H}{\otimes} W$ is a projective object in $\M_A(G)$. The statement is also true if we replace $\M_A(H)$ with $\M_{A,\chi}(H)$.
\end{cor}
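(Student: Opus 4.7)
The plan is to chain together three facts that have already been assembled in the excerpt. First, I would invoke Lemma~\ref{s-s}: since $\F$ is $H/A$-ordinary and $H$ is compact modulo $A$, the category $\M_A(H)$ (respectively $\M_{A,\chi}(H)$) is semisimple. In a semisimple abelian category every object is both projective and injective, so $(\s, W)$ is automatically a projective object of $\M_A(H)$.

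Next I would apply the previous lemma: since $H$ is open in $G$, the functor $c\text{-}\ind_H^G$ is left adjoint to the exact restriction functor $\text{Res}_H^G$, and therefore sends projective objects to projective objects. Combining this with Step~1, $c\text{-}\ind_H^G(\s)$ is projective in $\M_A(G)$ (and similarly in $\M_{A,\chi}(G)$ if we work with $\M_{A,\chi}(H)$).

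Finally, I would identify the algebraic induction with the compact induction. This is exactly the isomorphism $\varphi: \F G \otimes_{\F H} W \to c\text{-}\ind_H^G(\s)$ constructed in Section~\ref{one} for open $H$, so $\F G \otimes_{\F H} W \cong c\text{-}\ind_H^G(\s)$ as objects of $\M_A(G)$. The conclusion that $\F G \otimes_{\F H} W$ is projective in $\M_A(G)$ then follows at once, and the same chain of reasoning applied with $\M_{A,\chi}$ in place of $\M_A$ throughout gives the parenthetical strengthening.

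There is essentially no obstacle here; every ingredient has been prepared. The only thing to watch is that all three functors ($a\text{-}\ind$, $c\text{-}\ind$, and the isomorphism $\varphi$) really do land in the correct subcategory $\M_A(G)$ or $\M_{A,\chi}(G)$, which is guaranteed by Lemma~\ref{functors} once $H$ is assumed open and contains $A$. Hence the corollary is a one-line consequence of semisimplicity of $\M_A(H)$, the adjunction for compact induction, and the identification $a\text{-}\ind \cong c\text{-}\ind$ in the open case.
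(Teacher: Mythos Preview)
Your proposal is correct and follows exactly the same route as the paper: the corollary is stated immediately after the observation that $a\text{-}\ind_H^G \cong c\text{-}\ind_H^G$ for open $H$, and the preceding lemma (via Lemma~\ref{s-s} and Frobenius reciprocity) already shows that $c\text{-}\ind_H^G(\sigma)$ is projective whenever $\F$ is $H/A$-ordinary. Your three-step chain---semisimplicity of $\M_A(H)$, preservation of projectives by $c\text{-}\ind_H^G$, and the identification $a\text{-}\ind_H^G\cong c\text{-}\ind_H^G$---is precisely the paper's argument.
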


If $A$ is trivial, Corollary~\ref{projective} yields that
smooth representations of $G$ algebraically induced from a compact open subgroup are projective.

\begin{lemma}
\label{projres}
Let $G$ be a locally compact totally disconnected group. Suppose $\F$ is the trivial representation of $G$ and 
\begin{alignat*}{2}
        0 \rightarrow P_n\rightarrow P_{n-1} &\rightarrow \cdots &\mathllap{\cdots}\rightarrow P_0 \rightarrow \F \rightarrow 0
    \end{alignat*}
  is a projective resolution of $\F$ in $\M_A(G)$.
Let $(\pi, V)\in \M_A(G)$, not necessarily finite dimensional. Then 
 \begin{alignat*}{2}
        0 \rightarrow P_n \tensor V \rightarrow P_{n-1} \tensor V &\rightarrow \cdots &\mathllap{\cdots}\rightarrow P_0 \tensor V \rightarrow V\rightarrow 0
    \end{alignat*}
is a projective resolution for $V$ in $\M_A(G)$. The statement is also true if we replace $\M_A(G)$ with $\M_{A,\chi}(G)$.
\end{lemma}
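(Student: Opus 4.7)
My plan is to verify three things in turn: exactness of the tensored complex; that each $P_i \otimes V$ belongs to $\M_A(G)$; and that each $P_i \otimes V$ is projective. Throughout, the tensor product is over $\F$ with the diagonal action $\vg(p \otimes v) = \vg p \otimes \vg v$, and the new differentials are $d_i \otimes \id_V$. Exactness is immediate because $-\otimes_\F V$ is an exact endofunctor on $\F$-vector spaces; under the identification $\F \otimes_\F V \cong V$ it turns the original resolution into the displayed sequence, and each $d_i \otimes \id_V$ is $G$-equivariant for the diagonal action.

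For membership in $\M_A(G)$, smoothness follows since if $p \in P_i$ is fixed by a compact open subgroup $K_p \leq G$ and $v \in V$ is fixed by $K_v \leq G$, then the compact open subgroup $K_p \cap K_v$ fixes $p \otimes v$. For $A$-semisimplicity, decompose $P_i = \bigoplus (P_i)_\chi$ and $V = \bigoplus V_{\chi'}$ into $A$-isotypic components; on each tensor summand $(P_i)_\chi \otimes_\F V_{\chi'}$ every $\va \in A$ acts by a single scalar in some extension of $\F$, so the result is a direct sum of simple $\F A$-modules.

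The crux is projectivity, for which I would use the projection formula. For an open subgroup $H \leq G$ compact modulo $A$ and any $W \in \M_A(H)$, there is a natural $G$-isomorphism
\[
\bigl( \F G \otimes_{\F H} W \bigr) \otimes_\F V \;\cong\; \F G \otimes_{\F H} \bigl( W \otimes_\F \mathrm{Res}_H^G V \bigr),
\quad (\vg \otimes w) \otimes v \mapsto \vg \otimes (w \otimes \vg^{-1} v),
\]
with diagonal $G$-action on the left and the usual induced action on the right (using diagonal $H$-action on $W \otimes_\F V$). Direct checks show this is well-defined over $\F H$, $G$-equivariant, and has inverse $\vg \otimes (w \otimes v) \mapsto (\vg \otimes w) \otimes \vg v$. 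Since $W \otimes_\F \mathrm{Res}_H^G V \in \M_A(H)$ and the field is $H/A$-ordinary, Corollary~\ref{projective} implies the right-hand side is projective in $\M_A(G)$.

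By Lemma~\ref{enoughproj} and Corollary~\ref{enough_pro}, every projective of $\M_A(G)$ is a direct summand of a direct sum of such algebraically induced modules $a-\ind_H^G(W)$. Since $-\otimes_\F V$ commutes with direct sums and preserves direct summands, each $P_i \otimes V$ is projective, completing the proof. The same argument applies verbatim to $\M_{A,\chi}(G)$. The main obstacle is this projectivity step: the projection formula itself is a routine adjunction computation, but combining it with the structure of projectives in $\M_A(G)$ requires tracing the enough-projectives construction through the equivalence $\sF$ of Proposition~\ref{equiv1}.
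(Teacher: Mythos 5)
Your proof is correct in outline but takes a genuinely different route from the paper's. For the crux (projectivity of $P_i \otimes V$) the paper uses the tensor--hom adjunction directly: it observes
$\Hom_{\M_A(G)}(P_i \otimes V, -) \cong \Hom_{\M_A(G)}(P_i, \Hom_\F(V, -))$,
notes that $\Hom_{\M_A(G)}(P_i, -)$ is exact because $P_i$ is projective and that $\Hom_\F(V, -)$ is exact because $V$ is a free $\F$-module, and concludes by composing exact functors. This is a one-step argument that applies to an arbitrary projective $P_i$ with no need to know what the projectives look like. You instead reduce to the projection formula $a\text{-}\ind_H^G(W) \otimes_\F V \cong a\text{-}\ind_H^G(W \otimes_\F \mathrm{Res}_H^G V)$ together with the fact that projectives in $\M_A(G)$ are summands of sums of algebraically induced modules. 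That route works and, as a bonus, gives an explicit description of $P_i \otimes V$ as an induced module, but it relies on a step you yourself flag as incomplete: one must identify the projective generators $(\sH\Lambda_H)_{A,\chi}$ from Lemma~\ref{enoughproj} with modules of the form $a\text{-}\ind_{AH}^G(\widetilde\F_\chi)$, arrange $H \le K$ so that $\F$ remains $H$-ordinary, and check that $W \otimes_\F \mathrm{Res}_H^G V$ is still in $\M_A(H)$ so that Corollary~\ref{projective} applies. None of this is deep, but it is extra machinery the paper avoids entirely. If you intend to carry out this approach, fill in the identification $(\sH\Lambda_H)_{A,\chi} \cong a\text{-}\ind_{AH}^G(\widetilde\F_\chi)$ explicitly (taking $H \le K$ so that $\mu_K(H)^{-1}$ makes sense and $\F$ is $H$-ordinary, and noting the module vanishes when $\chi$ is nontrivial on $A \cap H$); otherwise the cleaner path is the paper's adjunction argument.
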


\begin{proof}
We will prove the statement for $\M_A(G)$, but the proof is the same for $\M_{A, \chi}(G)$. 
For the result to hold, it is enough to show that $P_i \otimes V$ is a projective object in $\M_A(G)$ for all $i=1,..,n$.


Observe that
$\Hom_{\M_A(G)} (P_i \otimes V, \underline{\quad}\ ) \cong  \Hom_{\M_A(G)} (P_i, \Hom_{\F} (V, \underline{\quad}\ ))$:  
to every $\a\in\Hom_{\M_A(G)} (P_i \tensor V, W)$ we associate
$\b \in \Hom_{\M_A(G)} (P_i, \Hom_{\F} (V, W ))$
defined by $\b: p_i \mapsto (\g: v \mapsto \a(p_i \otimes v))$ for $p_i \in P_i, v \in V$. 
Conversely, to every $\b: p_i \mapsto (\g: v \mapsto w)$ we associate
$\a: p_i \otimes v \mapsto \b(p_i)(v)$ for $p_i \in P_i, v \in V, w \in W$.

Since $P_i$ is projective,
the functor $\Hom_{\M_A(G)} (P_i, \underline{\quad}\ )$ is exact.
As $V$ is a free $\F$-module $\Hom_{\F} (V, \underline{\quad}\ )$ is also exact. The composition of two exact functors is exact, so $\Hom_{\M_A(G)} (P_i \otimes V, \underline{\quad}\ )$ is exact
and $P_i \otimes V$ is projective.   
\end{proof}

Let $\sX_\bullet=(\sX_n)$, for $n=0,1,...$,
be a simplicial set \cite[Ch. 1]{GeMa}.
If $f:[m] \to [n]$ is nondecreasing map, $[n]=\{0,1,...n \}$,
by $\sX(f): \sX_n \to \sX_m$ we denote the $f$-th face map.
We say that $G$ \emph{acts} on the simplicial set $\sX_\bullet$
if $G$ acts continuously on each discrete set $\sX_n$
and the action {\em respects} the face maps $\sX(f)$
so that $G$ acts continuously on the geometric realisation $|\sX|$.
Using the canonical bijection \cite[I.2.9]{GeMa} 
\begin{equation}\label{can_bi}
\mathring{\tau}: \coprod_n
\mathring{\D}_n\times \sX_{(n)}
\rightarrow |\sX|
\end{equation}
where $\sX_{(n)}$ is the set of non-degenerate $n$-simplices
and $\mathring{\D}_n =\{(\alpha_0,\ldots \alpha_n)\in \R^{n+1}_{>0} | \sum \alpha_k =1\}$
is the abstract $n$-simplex, we can write this action by
\begin{equation} \label{act}
\vg \cdot \big( (\alpha_i), x \big)=
\big( F(\vg,x)(\alpha_i), \vg \cdot x \big) 
\end{equation}
where $F(\vg,x)$ is an auto-homeomorphism of the abstract $n$-simplex. 

The respect of the face maps does not necessarily mean
that the action commutes with the face maps $\sX(f)$.
Recall the standard notation \cite{GeMa}:
$\partial^i=\partial^i_n : [n-1] \rightarrow [n]$ is
the unique increasing map, missing the value $i$, 
$\sigma^i = \sigma^i_n : [n+1] \rightarrow [n]$ is
the unique non-decreasing surjective map,
assuming the value $i$ twice. 
Given $x\in\sX_n$, its codimension one faces
are $\sX(\partial^i)(x)$ for various $i$.
The codimension one faces of $\vg \cdot x$
are $\vg \cdot \sX(\partial^i)(x)$ but their order could be different.
Let us define the maps
\begin{equation} \label{act2}
R=R_n : G\times \sX_n \rightarrow S_{n+1} = \Sy ([n])
\ \ \mbox{ by } \ \ 
\vg \cdot \sX(\partial^i)(x)
=
\sX(\partial^{R(\vg,x)(i)})( \vg \cdot x).
\end{equation}

The algebraic condition on $R$ that allows the $G$-action on $|\sX|$
is that $(G\times\sX, R)$ constitutes {\em a crossed simplicial groupoid}.
Since we cannot find it written out, we give further details.
Firstly, $R$ must be {\em a groupoid map}: 
\begin{equation} \tag{\ccr 1}\label{cond_1}
R(1,x) = 1, \
R(\vg\vh, x) =
R(\vg, \vh \cdot x)
R(\vh, x)
\ \mbox{ for all } \
\vg,\vh\in G, \; x\in\sX_n.
\end{equation}
Now we need the symmetric crossed
simplicial group $\bS_\bullet$
\cite{FL}. 
Recall that it is a simplicial set with
$
\bS_n = S_{n+1}
$
and the face maps generated by
$$
\bS (\partial^i_n) ( \phi ) =
\sigma^i_{n-1}\circ \phi \circ \partial^{\phi^{-1}(i)}_n
, \ \  
\bS (\sigma^i_n) ( \phi ) (k)=
   \begin{cases}
     i, & \text{if}\ i=\phi(k), \\
     i+1, & \text{if}\ i=\phi(k-1), \\
     (\sigma^i_n)^{-1} \phi \, \sigma^{\phi^{-1}(i)}_n (k), & \text{otherwise}.
      \end{cases}
   $$
   Secondly, the map $R$ must be simplicial:
\begin{equation} \tag{\ccr 2}\label{cond_2}
\bS (f) (R_n(\vg, x)) =
R_m(\vg, \sX(f)(x))
\ 
\mbox{ for all }
\ 
f:[m] \to [n], \;
\vg \in G, \; x \in \sX_n.
\end{equation}
Notice that it suffices to verify Condition~(\ref{cond_2})
only for $f=\partial^i_n$ and $f=\sigma^i_n$ for all $i$ and $n$.  
Thirdly, the maps $R$ must compute the permutations of
codimension one faces as prescribed by Equation~(\ref{act2}):
\begin{equation} \tag{\ccr 3}\label{cond_3}
\vg \cdot \sX(\partial^i_n)(x)
=
\sX(\partial_n^{R(\vg,x)(i)})( \vg \cdot x)
\ \mbox{ for all } \
\vg\in G, \; x\in\sX_n, \; i \in [n]
\end{equation}
Finally, a similar condition must hold for the codimension one 
degenerations:
\begin{equation} \tag{\ccr 4}\label{cond_4}
\vg \cdot \sX(\sigma^i_n)(x)
=
\sX(\sigma_n^{R(\vg,x)(i)})( \vg \cdot x)
\ \mbox{ for all } \
\vg\in G, \; x\in\sX_n, \; i \in [n].  
\end{equation}
If the maps $R_n$ are independent the second argument $x\in\sX_n$,
then all these conditions are equivalent to saying that $G$,
turned to the trivial simplicial group $\bG_\bullet$ with
$\bG_n=G$, $\bG(f)=\mbox{Id}_G$, is a crossed simplicial group $\bG$ \cite{FL}.
We summarize this discussion in the following proposition.
Since we are not using it, we leave a proof out for an inquisitive reader.
\begin{prop} \label{cross} (cf. \cite[Prop 1.7]{FL}) 
Let $\sX_\bullet$ be a simplicial set with an abstract group $G$ acting on each $\sX_n$.
Given a system of functions $R=R_n : G\times \sX_n \rightarrow S_{n+1}$,
the following two statements are equivalent:
\begin{enumerate}
\item
  \begin{itemize}
  \item  The group $G$ acts on the topological space $|\sX|$ by
    the following simplification of Formula~(\ref{act}):
    $$
    \vg \cdot \big( (\alpha_i), x \big)=
\big( (\alpha_{R(\vg,x)(i)}), \vg \cdot x\big) 
    $$
  \item the $G$-action on degenerate simplices agree with Condition~(\ref{cond_4}),
  \item the maps $R$ are given by Formula~(\ref{act2}).
    \end{itemize}
\item The maps $R$ satisfy Conditions (\ref{cond_1}), (\ref{cond_2}), (\ref{cond_3})
  and (\ref{cond_4}).
  \end{enumerate}
\end{prop}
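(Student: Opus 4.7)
The plan is to establish the equivalence by treating each direction separately, with the bulk of the work residing in the forward direction. For (1) $\Rightarrow$ (2), Condition (\ref{cond_3}) is precisely the content of Formula (\ref{act2}), and Condition (\ref{cond_4}) is assumed outright as part of (1). For Condition (\ref{cond_1}), I apply (\ref{cond_3}) twice: once to $\vg\vh$ acting on $\sX(\partial^i_n)(x)$, and once by first letting $\vh$ act and then $\vg$. Comparing the two expressions, using that distinct codimension-one faces of the image $\vg\vh \cdot x$ are distinguishable (so that a permutation of $[n]$ sending the $i$-th face to a prescribed one is unique), forces $R(\vg\vh, x) = R(\vg, \vh\cdot x) \circ R(\vh, x)$, and $R(1,x)=1$ follows similarly from the trivial action.

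Condition (\ref{cond_2}) requires more bookkeeping but reduces, via the standard simplicial identities, to verification only on the generators $f = \partial^i_n$ and $f = \sigma^i_n$. For $f=\partial^i_n$, I combine (\ref{cond_3}) applied to the simplex $\sX(\partial^i_n)(x)$ with (\ref{cond_3}) applied to $x$ itself: both describe codimension-two faces of $\vg \cdot x$, and comparing the two permutations obtained in this way reproduces the defining formula for $\bS(\partial^i_n)$ given just before (\ref{cond_2}). The case $f=\sigma^i_n$ uses (\ref{cond_4}) in place of (\ref{cond_3}) and the analogous piecewise description of $\bS(\sigma^i_n)$.

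For the reverse direction (2) $\Rightarrow$ (1), I use the canonical bijection $\mathring{\tau}$ of (\ref{can_bi}) to define a candidate action on $|\sX|$ by the simplified formula, which only permutes barycentric coordinates on each non-degenerate cell. The cocycle Condition (\ref{cond_1}) guarantees this is a genuine action on the disjoint union of open simplices; the task is then to show the induced map on $|\sX|$ is continuous, or equivalently that it descends to the standard quotient presentation $\coprod_n \D_n \times \sX_n / {\sim}$. Here Condition (\ref{cond_2}), together with (\ref{cond_3}) and (\ref{cond_4}), is precisely what is needed to respect the face and degeneracy identifications $(\sX(f)\alpha, x) \sim (\alpha, \sX(f)x)$: on the boundary, the permutation $R(\vg,x)$ on $[n]$ restricts via $\bS(f)$ to the permutation $R(\vg, \sX(f)x)$ on $[m]$, exactly matching how the face map pulls back the combinatorics of the action.

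The main obstacle is the last verification: checking that the simplified formula, defined cleanly on the interior of each non-degenerate cell, glues consistently along the lower-dimensional strata. The conceptual content is that the crossed simplicial groupoid axioms (\ref{cond_1})--(\ref{cond_4}) are a complete and non-redundant codification of the compatibility one encounters when unpacking what it means for $G$ to act on the quotient space $|\sX|$ in a way that permutes non-degenerate simplices. Once this equivalence is established at the level of generators $\partial^i_n$ and $\sigma^i_n$, the general simplicial operator $f$ is handled by induction on the length of a decomposition of $f$ into such generators, exploiting the cocycle Condition (\ref{cond_1}) to concatenate the permutations correctly.
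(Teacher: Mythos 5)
The paper itself explicitly declines to prove this proposition: ``Since we are not using it, we leave a proof out for an inquisitive reader.'' So there is no paper argument to compare against; I can only assess your proposal on its own terms.

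There is a genuine gap in your (1) $\Rightarrow$ (2) direction. To derive Condition~(\ccr 1) you apply (\ccr 3) twice and then conclude that the resulting permutations of $[n]$ must coincide, ``using that distinct codimension-one faces of the image $\vg\vh \cdot x$ are distinguishable.'' That premise is false for a general simplicial set: the map $i \mapsto \sX(\partial^i_n)(\vg\vh \cdot x)$ need not be injective. (Take, for example, $\sX_\bullet$ with a single non-degenerate $1$-simplex $e$ whose two faces are the same vertex, and a finite cyclic group acting trivially on the simplices; one can then choose $R$ so that (\ccr 3) and (\ccr 4) hold vacuously while the cocycle identity in (\ccr 1) fails.) So (\ccr 3) and (\ccr 4) alone, together with the given $G$-action on each $\sX_n$, do not force (\ccr 1) or (\ccr 2). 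The same non-injectivity problem undermines your argument for (\ccr 2) at $f = \partial^i_n$, where you compare two descriptions of a codimension-two face of $\vg\cdot x$.

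What your argument is missing is precisely the first bullet point of statement (1), which you never actually invoke in this direction. The $G$-action on $|\sX|$ via the simplified formula acts on the barycentric coordinate $\alpha \in \mathring{\D}_n$, and as $\alpha$ ranges over the open simplex the coordinate permutation is recovered uniquely: if $\alpha_{\sigma(i)} = \alpha_{\tau(i)}$ for all $\alpha \in \mathring{\D}_n$ and all $i$, then $\sigma=\tau$. Running the group-action identity $\vg\vh \cdot \big((\alpha_i),x\big) = \vg\cdot\big(\vh \cdot ((\alpha_i),x)\big)$ through the simplified formula and comparing barycentric coordinates gives (\ccr 1) cleanly and without any hypothesis on the combinatorics of $\sX_\bullet$; passing a boundary point of $\mathring{\D}_n \times \{x\}$ to the adjacent cell $\mathring{\D}_{n-1} \times \{\sX(\partial^i_n)x\}$ (respectively the degeneracy identification) and using that the action is well-defined and continuous on $|\sX|$ gives (\ccr 2) for $f = \partial^i_n$ (respectively $f=\sigma^i_n$). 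Your sketch of (2) $\Rightarrow$ (1) is conceptually on the right track: the work is exactly the gluing/continuity check you describe, though it should be spelled out more carefully along the cell boundaries.
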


%

We are finally ready for the main theorem of this section,
whose idea goes back to Bernstein.

\begin{thm} (cf. \cite[IV.4.2]{Bern})
\label{mainthm}
Let $G$ be a locally compact totally disconnected group,
$A$ its closed central subgroup.
Suppose 
$G$ acts continuously on an $n$-dimensional simplicial set $\sX_\bullet$
with contractible geometric realisation $|\sX |$
so that $A$ acts trivially on $\sX_\bullet$. 
Suppose that the action of $G$ extends to $|\sX |$
(as in Proposition~\ref{cross}). 
Suppose further that the stabiliser $G_{x}$ of any
non-degenerated simplex $x\in \sX_{(k)}$ is 
not only open (that follows from continuity) but also compact modulo $A$. 
If the field  $\F$ is 
$G_{x}/A$-ordinary for any $x\in \sX_k$, then 
$$ \pd(\M_{A, \chi}(G)) \leq n \quad \text{and} \quad \pd(\M_{A}(G)) \leq n. $$
\end{thm}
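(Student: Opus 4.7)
The plan is to construct an explicit projective resolution of the trivial $G$-module $\F$ of length $n$ inside $\M_{A,1}(G)$ using the simplicial set $\sX_\bullet$, and then transfer it to arbitrary modules via Lemma~\ref{projres}. Concretely, let $C_k$ denote the free $\F$-module on the set $\sX_{(k)}$ of non-degenerate $k$-simplices with the usual normalised (Moore) boundary $\partial_k$. Since $|\sX|$ is contractible and the normalised chain complex computes its homology, the augmented complex
$$0 \to C_n \to C_{n-1} \to \cdots \to C_0 \to \F \to 0$$
is exact, and vanishes above degree $n$ by the dimension hypothesis.

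The naive permutation action of $G$ on $\sX_{(k)}$ does not commute with $\partial_k$ because $R(\vg,x)$ can reorder vertices. I would fix this by the sign twist $\vg\cdot_\varepsilon x := \mathrm{sgn}(R(\vg,x))\cdot(\vg\cdot x)$; Conditions (\ref{cond_1})--(\ref{cond_3}) then guarantee that $\partial_k$ is $G$-equivariant and that the augmentation $C_0\to\F$ maps onto the trivial module. Decomposing $\sX_{(k)}$ into $G$-orbits and picking a representative $x$ in each, the resulting summand of $C_k$ is isomorphic to $c-\ind_{G_x}^G(\varepsilon_x)$, where $\varepsilon_x:G_x\to\{\pm 1\}\subseteq\F^\times$ sends $\vg$ to $\mathrm{sgn}(R(\vg,x))$ and is a homomorphism by~(\ref{cond_1}).

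Now I would verify projectivity of each $c-\ind_{G_x}^G(\varepsilon_x)$. The stabiliser $G_x$ is open (continuity of the action on discrete $\sX_k$) and compact modulo $A$ by hypothesis, and $\F$ is $G_x/A$-ordinary. Moreover $A$ acts trivially on $\sX_\bullet$, so $A\subseteq\ker\varepsilon_x$ and $\varepsilon_x\in\M_{A,1}(G_x)$. Corollary~\ref{projective} then shows $c-\ind_{G_x}^G(\varepsilon_x)$ is a projective object in $\M_A(G)$, in fact in $\M_{A,1}(G)$. This gives a length-$n$ projective resolution of $\F$ in $\M_{A,1}(G)\subseteq\M_A(G)$. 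For an arbitrary $V\in\M_{A,\chi}(G)$ (respectively $\M_A(G)$), Lemma~\ref{projres} turns this into a length-$n$ projective resolution of $V$ in the same category; tensoring with modules in $\M_{A,1}$ preserves the $A$-isotype $\chi$ because $A$ is central. Hence $\pd(\M_{A,\chi}(G))\le n$ and $\pd(\M_A(G))\le n$.

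The main obstacle I expect is the orientation bookkeeping: one must verify that the sign twist $\mathrm{sgn}\circ R$ really makes $G$ act by chain maps, i.e.\ that the reordering of codimension-one faces prescribed by~(\ref{cond_3}) produces exactly the sign needed to cancel the rearrangement of the alternating-sum boundary. A subsidiary point is to justify working with the normalised complex so that orbits of non-degenerate simplices (rather than all simplices) index the summands, and to confirm that contractibility of $|\sX|$ suffices for acyclicity of the normalised augmented complex with $\F$-coefficients.
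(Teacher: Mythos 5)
Your proposal is correct and follows essentially the same route as the paper's own proof: form the normalised chain complex of $\sX_\bullet$, twist the $G$-action by $(-1)^{\operatorname{sign}R(\vg,x)}$, decompose each $X_k$ into orbit summands induced from the sign character of $G_x$, invoke Corollary~\ref{projective} for projectivity, and then apply Lemma~\ref{projres} to pass from the trivial module to a general $V$. The only cosmetic difference is that you phrase the orbit summands as $c\text{-}\ind_{G_x}^G(\varepsilon_x)$ while the paper writes $\F G\otimes_{\F G_x}\F[x]$, but these agree for open $G_x$, as the paper itself notes.
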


\begin{proof}
Recall that the projective dimension of an object is the minimal length of a resolution by projective objects. Since $\M_{A, \chi}(G)$ and $\M_A(G)$ have enough projectives, projective resolutions exist, so we can talk about the projective dimension of the categories.


The simplicial homology complex of $\sX$ 
\begin{equation}\label{boundary_s3}
d_k :C^\sharp_k (\sX_\bullet , \F)
\rightarrow
C^\sharp_{k-1} (\sX_\bullet , \F), \ 
d_k \Big(\sum_{x\in \sX_k} \alpha_x x \Big) \coloneqq 
 \sum_{x\in \sX_k} \sum_{i=0}^k
(-1)^i
\alpha_x \, 
\sX(\partial^i_k)(x)
\end{equation}
is a complex of smooth $G$-modules
in  $\M_{A, 1}(G)$ under
\begin{equation}\label{action_s3}
\vg \cdot \Big(\sum_{x\in \sX_k} \alpha_x x \Big) \coloneqq 
 \sum_{x\in \sX_k} 
(-1)^{sign(R(\vg,x))}
\alpha_x \, (\vg \cdot x) \, .
\end{equation}
If $x= \sX(\sigma^i) (y)$ then
$y = \sX(\partial^i) (x) = \sX(\partial^{i+1}) (x)$
while all the other faces $\sX(\partial^j) (x)$
are degenerate.
Hence, $d(x)$ is a linear combination of degenerate simplexes
and the spans of degenerate simplexes form a subcomplex
of submodules 
$(C^\flat_k (\sX_{\bullet} , \F),d_k )$.

Let $X_k=C_k(\sX_{\bullet} , \F) \coloneqq C^\sharp_k(\sX_{\bullet} , \F) / C^\flat_k (\sX_{\bullet} , \F)$. 
The $\F$-vector space $X_k$ has a basis $[x]$ with various non-degenerate simplices
$x\in \X_{(k)}$. It is still a smooth $G$-module
in  $\M_{A, 1}(G)$. The spaces $X_k$ comprise the chain complex
 \begin{alignat*}{2}
   \mathscr{C}: \quad       X_{n} \xrightarrow{d_n} X_{n-1} &\xrightarrow{d_{n-1}} \ldots
   &\mathllap{\ldots}\xrightarrow{d_1} X_0 \ ,
 \end{alignat*}
 that computes the homology of $|\sX|$. 
 Since $|\X|$ is contractible, 
 all homology groups are trivial
 except $H_{0}(\mathscr{C})\cong\F$. This yields the exact sequence:
\begin{equation}\label{exa}
  0\rightarrow X_{n} \xrightarrow{d_n} X_{n-1}
  \xrightarrow{d_{n-1}} \ldots \xrightarrow{d_1} X_0 \rightarrow \F \rightarrow 0.
\end{equation}

Let $\F [x]$ be the span of $[x]$ for $x \in \X_{(k)}$.
The stabiliser $G_x$ acts on $\F[x]$ by
$$
\rho: G_x \to \aut_{\F}(\F [x]), \ \
\rho (\vg) = (-1)^{sign(R(\vg,x))}.
$$
Since $A$ acts trivially on $\X_\bullet$, it also acts trivially on $\F[x]$,
so $(\rho, \F[x]) \in \M_{A, 1}(G)$. 
Since $G_x$ is open and compact modulo $A$, 
$\F G \mathbin{ \underset{ \mathclap{ \F G_x} }{\otimes } }\F [x]$ is a projective object in $\M_{A, 1}(G)$
by Corollary~\ref{projective}. 
 
Let $\X_{(k)} (G)$ be a complete set of representatives of $G$-orbits
on  $\X_{(k)}$.
As a sum 
of projective objects
$\sum\limits_{x \in \X_{(k)}(G)} \F G \mathbin{ \underset{ \mathclap{ \F G_x} }{\otimes } } \F [x]$
is also projective.
We have a $G$-module isomorphism
$$
\sum\limits_{x \in \X_{(k)}(G)} \F G \mathbin{ \underset{ \mathclap{ \F G_x} }{\otimes } } \F [x]
\xrightarrow{\cong}X_k, \ \
\vg \otimes \a [x] \mapsto \a [\vg \cdot x] 
$$
so that the sequence~(\ref{exa})
is a projective resolution of $\F$ in $\M_{A, 1}(G)$.

Let $(\pi, V) \in\M_{A, \chi} (G)$. By Lemma~\ref{projres} 
\begin{alignat*}{2}
       X_{n} \otimes V \rightarrow X_{n-1} \otimes V &\rightarrow \cdots &\mathllap{\cdots}\rightarrow X_0 \tensor V \rightarrow V \rightarrow 0
          \end{alignat*}
is a projective resolution of $V$ of length at most $n$.
This concludes the proof for $\M_{A, \chi}(G)$. Since $\M_A(G)=\oplus_\chi \M_{A, \chi}(G)$,
we get $\pd(\M_{A}(G)) \leq n$. 
\end{proof}

\begin{ex}
  Let $G=\GL$, where $\K$ is a non-Archimedean local field
  and centre $Z(G)=\K^{\x}$. Let $\pi$ be a uniformizer in $\K$.
  Set $A=\< \pi^n \>$ as our closed central subgroup.
  As observed by Bernstein \cite[Th. 29]{Bern},
  the action of $G$ on its Bruhat-Tits building implies that $\pd (\M (\PGL)) \leq n$.
  Our Theorem~\ref{mainthm}
  gives not only this result but also a subtler result that 
  $\pd (\M_A (\GL)) \leq n$. 
\end{ex}

\section{Cosheaves} \label{two}

While we follow Gelfand and Manin \cite[Ch. 1]{GeMa}
with all notation and terminology, 
we choose to use the terms 
{\em a sheaf} for a cohomological coefficient system
and
{\em a cosheaf} for a homological coefficient system.
By default all our sheaves and cosheaves are with coefficients
in $\F$-vector spaces.
Our change of terminology is justified not only by its brevity:
a sheaf $\sF$ on $\sX_\bullet$ (cf. Definition~\ref{dfn_sheaf})
determines a constructible sheaf
$|\sF|$ on the geometric realisation $|\sX|$. 
The canonical bijection~(\ref{can_bi})
permits an explicit description
of the stalk $|\sF|$
at a point $p\in |\sX|$:
$$
|\sF|_{(p)} = \sF_x
\ \mbox{ where } \ 
\mathring{\tau}(\alpha,x) = p,
$$
while the restrictions are determined by the linear
structure maps $\sF(f,x): \sF_{\sX(f)x}\rightarrow \sF_x$, where $f:[m] \to [n]$.
Similarly, a cosheaf $\sC$ on $\sX_\bullet$ defines the 
constructible cosheaf
$|\sC|$ on $|\sX|$. 

Now we go back to $G$ acting continuously on $\sX_\bullet$ and $|\sX|$
with the central subgroup $A$ acting trivially.
The continuity means 
that the stabiliser $G_x$ of any simplex $x\in \sX_n$ is open in $G$.

\begin{defn}
An \emph{equivariant cosheaf} is a cosheaf $\sC$ 
with an additional data:
a linear map $\vg_x =\vg(\sC)_x: \sC_x \to \sC_{\vg x}$ 
for any $\vg \in G$ and any simplex $x$.
This data satisfies three axioms:
\begin{itemize}
\item[(i)] $\vg_{\vh x} \circ \vh_x = (\vg\vh)_x$ for any $\vg,
  \vh \in G$ and a simplex $x$.
\item[(ii)] 
$\sC_x$ is a smooth representation of $G_x$ for any simplex $x$.
\vspace{4pt}
\item[(iii)]
The square 
$
\begin{CD}
\sC_x @>>\vg_x> \sC_{\vg x}\\
@VV{\sC (f,x)}V @VV{\sC (\vg f,  \vg x)}V\\
\sC_{\sX(f)x} @>{\vg_{\sX(f)x}}>> \sC_{\sX(\vg f)\vg x}
\end{CD}
$ 
\hspace{14pt}
is commutative
for all $\vg \in G$,
\vskip 4pt
\noindent
simplices $x\in \sX_n$ and 
nondecreasing maps
$f:[m] \rightarrow [n]$.
\end{itemize}
A \emph{morphism} $\psi : \sC \rightarrow \sD$ of equivariant cosheaves
is a system of linear maps \linebreak
 $\psi_x : \sC_x \rightarrow \sD_x$,
commuting with actions
and corestrictions, i.e, 
the squares 
$$
\begin{CD}
\sC_x @>>\psi_x> \sD_{x}\\
@VV{\sC (f,x)}V @VV{\sD (f, x)}V\\
\sC_{\sX(f)x} @>{\psi_{\sX(f)x}}>> \sD_{\sX(f)x}
\end{CD}
\hspace{34pt}
\mbox{ and }
\hspace{24pt}
\begin{CD}
\sC_x @>>\psi_x> \sD_{x}\\
@VV{\vg(\sC)_x}V @VV{\vg (\sD)_x}V\\
\sC_{\vg x} @>{\psi_{\vg x}}>> \sD_{\vg x}
\end{CD}
$$ 
are commutative
for all $\vg \in G$, 
$x\in \sX_n$ and 
nondecreasing maps
$f:[m] \rightarrow [n]$.
\end{defn}

We denote the category of equivariant cosheaves by
$\Ch$.
It is an abelian category \cite{SS2}: kernels and cokernels
can be computed simplexwise. Another abelian category of interest
is the category $\Sh$ of equivariant sheaves. For the sake of completeness
we give its full definition.

\begin{defn}
  \label{dfn_sheaf}
An \emph{equivariant sheaf} is a sheaf $\sF$ 
with an additional data:
a linear map $\vg_x =\vg(\sF)_x: \sF_x \to \sF_{\vg x}$ 
for any $\vg \in G$ and any simplex $x$.
This data satisfies three axioms:
\begin{itemize}
\item[(i)] $\vg_{\vh x} \circ \vh_x = (\vg\vh)_x$ for any $\vg,
  \vh \in G$ and a simplex $x$. 
\item[(ii)] 
$\sF_x$ is a smooth representation of $G_x$ for any simplex $x$.
\vspace{4pt}
\item[(iii)]
The square 
$
\begin{CD}
\sF_x @>>\vg_x> \sF_{\vg x}\\
@AA{\sF (f,x)}A @AA{\sF (\vg f,  \vg x)}A\\
\sF_{\sX(f)x} @>{\vg_{\sX(f)x}}>> \sF_{\sX(\vg f)\vg x}
\end{CD}
$ 
\hspace{14pt}
is commutative
for all $\vg \in G$,
\vskip 4pt
\noindent
simplices $x\in \sX_n$ and 
nondecreasing maps
$f:[m] \rightarrow [n]$.
\end{itemize}
A \emph{morphism} $\psi : \sF \rightarrow \sE$ of equivariant sheaves
is a system of linear maps \linebreak
$\psi_x : \sF_x \rightarrow \sE_x$,
commuting with actions
and restrictions, i.e, 
the squares 
$$
\begin{CD}
\sF_x @>>\psi_x> \sE_{x}\\
@AA{\sF (f,x)}A @AA{\sE (f, x)}A\\
\sF_{\sX(f)x} @>{\psi_{\sX(f)x}}>> \sE_{\sX(f)x}
\end{CD}
\hspace{34pt}
\mbox{ and }
\hspace{24pt}
\begin{CD}
\sF_x @>>\psi_x> \sE_{x}\\
@VV{\vg(\sF)_x}V @VV{\vg (\sE)_x}V\\
\sF_{\vg x} @>{\psi_{\vg x}}>> \sE_{\vg x}
\end{CD}
$$ 
are commutative
for all $\vg \in G$, 
$x\in \sX_n$ and 
nondecreasing maps
$f:[m] \rightarrow [n]$.
\end{defn}

We say that an equivariant cosheaf $\sC$ (sheaf $\sF$)
is \emph{discrete}
if the stabiliser $G_x$
of any simplex $x$ acts on $\sC_x$ 
(correspondingly $\sF_x$) 
through a discrete quotient,
i.e. the kernel of this representation is an open subgroup of $G_x$.
The full subcategories of discrete equivariant cosheaves
$\dCh$ or discrete equivariant sheaves
$\dSh$ are abelian categories.

Other full subcategories are $A$-semisimple (co)sheaves, i.e.,
those (co)sheaves where each $\sF_x$ (correspondingly $\sC_x$)
is $A$-semisimple. There is a further version of 
$A$-semisimple (co)sheaves with a fixed character $\chi$.
Hence, we have six categories of equivariant cosheaves (and similarly sheaves):
$$
\begin{CD}
\Ch @<<\supseteq< \ChA@<<\supseteq< \ChAc  \\
@AA{\vsub}A @AA{\vsub}A @ AA{\vsub}A \\
\dCh @<\supseteq<< \dChA@<\supseteq<< \dChAc  
\end{CD}
$$

If $(\rho, V)$ is a smooth representation of $G$, we can associate
\emph{the trivial cosheaf} $\Vt$ 
and
\emph{the trivial sheaf} $\Vl$ 
to it.
We define 
$$
\Vl_{\, x}=\Vt_{\, x}\coloneqq V,
\ \ \ 
\Vt(f,x)\coloneqq \mbox{Id}_V ,
\ \ \ 
\Vl(f,x)\coloneqq \mbox{Id}_V ,
\ \ \ 
\vg_x\coloneqq \rho(\vg)
$$
for all $\vg \in G$, 
$x\in \sX_n$ and 
nondecreasing maps
$f:[m] \rightarrow [n]$.
The trivial cosheaf  $\Vt$
is discrete ($A$-semisimple) if and only if 
$V$ is discrete ($A$-semisimple)
if and only if 
the trivial sheaf  $\Vl$
is discrete ($A$-semisimple).

We need to work a bit harder 
to construct more interesting discrete sheaves and cosheaves. 
With this aim in mind we propose the following definition.

\begin{defn} A \emph{system of subgroups} $\sG$ of $G$ acting on $\sX_\bullet$
is a datum assigning a subgroup $\sG_x$ of the simplex stabiliser
$G_x$ to each simplex $x\in \sX_n$. The datum needs to be $G$-equivariant, i.e.,
$\vg \sG_x \vg^{-1} = \sG_{\vg x}$ for all $\vg\in G$ and $x\in \sX_n$. 
The following adjectives
will be applied to a system of subgroups $\sG$: 
\begin{itemize}
\item The system is \emph{open} if $\sG_x$ is open in $G_x$ for all $x$.
\item The system is \emph{closed} if $\sG_x$ is closed in $G_x$ for all $x$.
\item The system is \emph{cofinite} if 
the index of $\sG_x$ in $G_x$ is finite for all $x$.
\item The system is \emph{compact modulo $A$} if 
$\sG_x$ is compact modulo $A$ for all $x$.
\item The system is \emph{contravariant} if $\sG_{\sX(f)x}\subseteq\sG_x$ 
for all 
$x\in \sX_n$ and 
nondecreasing maps
$f:[m] \rightarrow [n]$.
\item The system is \emph{covariant} if $\sG_{\sX(f)x}\supseteq\sG_x$ 
for all 
$x\in \sX_n$ and 
nondecreasing maps
$f:[m] \rightarrow [n]$.
\end{itemize}
\end{defn}

Observe that the $G$-equivariance implies that
$\sG_x$ is a normal subgroup of $G_x$.
We have a minor moral dilemma which system we should call covariant
and which contravariant. We resolve this dilemma
by calling covariant
the system of stabilisers $\sG_x\coloneqq G_x$ for a label-preserving action of $G$
on a building.
We can construct interesting sheaves and cosheaves by taking invariants
and coinvariants with respect to a system of subgroups.

\begin{prop}\label{cosheaf}
Let $\sG$ be a system of subgroups and $(\rho, V)$ a smooth $G$-representation. 
The following statements hold:
\begin{enumerate}
\item If $\sG$ is contravariant, then
the invariants $\VtI_{\, x}\coloneqq V^{\sG_x}$ 
is an equivariant cosheaf
and
the coinvariants $\VlC_{\, x}\coloneqq V_{\sG_x}$ 
is an equivariant sheaf.
\item If $\sG$ is covariant, then
the invariants $\VlI_{\, x}\coloneqq V^{\sG_x}$ 
is an equivariant sheaf
and
the coinvariants $\VtC_{\, x}\coloneqq V_{\sG_x}$ 
is an equivariant cosheaf.
\item If, further to (1) or (2), $\sG$ is open,
then the (co)sheaf is discrete.
\item If, further to (1) or (2), $V$ is 
$A$-semisimple (with a fixed character $\chi$), 
  then the sheaves $\VlI$, $\VlC$ and
  the cosheaves $\VtI$, $\VtC$
are 
$A$-semisimple (with a fixed character $\chi$ correspondingly). 
\end{enumerate}
\end{prop}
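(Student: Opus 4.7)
The plan is to verify each of the three equivariant (co)sheaf axioms case by case; every claim follows from unwinding definitions once the variance of $\sG$ is matched to the correct direction of the structure maps.

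I would first record the preliminary observation that each $\sG_x$ is normal in $G_x$, which is immediate from $\vg\sG_x\vg^{-1}=\sG_{\vg x}$ applied to $\vg\in G_x$. Using the identity $h\vg=\vg(\vg^{-1}h\vg)$, one checks that the $G_x$-action on $V$ preserves $V^{\sG_x}$ and descends to $V_{\sG_x}$; both are smooth $G_x$-modules as subquotients of the smooth $G_x$-module $V$, establishing axiom (ii). More generally, $\rho(\vg)$ sends $V^{\sG_x}$ bijectively onto $V^{\sG_{\vg x}}$, so defining $\vg_x$ as the restriction of $\rho(\vg)$ is legitimate, and axiom (i) transfers verbatim from the $G$-action on $V$; the coinvariant case is parallel, via the induced map on quotients.

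For the structure maps I would simply read off the natural inclusions and quotients. In case (1), contravariance $\sG_{\sX(f)x}\subseteq\sG_x$ provides the inclusion $V^{\sG_x}\hookrightarrow V^{\sG_{\sX(f)x}}$ (a cosheaf map, in the right direction) and the dual surjection $V_{\sG_{\sX(f)x}}\twoheadrightarrow V_{\sG_x}$ (a sheaf map); in case (2), covariance reverses the inclusions of subgroups, and the resulting maps run in the opposite directions, giving a sheaf of invariants and a cosheaf of coinvariants. Axiom (iii) then collapses, in every instance, to the equivariance identity $\rho(\vg)V^{\sG_x}=V^{\sG_{\vg x}}$ together with the universal property of (co)invariants, so the commuting squares are automatic.

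For part (3), openness of $\sG_x$ in $G_x$ makes $G_x/\sG_x$ a discrete group, and the $G_x$-action on every stalk factors through it, giving discreteness. For part (4), the centrality of $A$ in $G$ forces each $A$-isotypic summand $V_\chi$ to be $\sG_x$-stable, so the functors $(-)^{\sG_x}$ and $(-)_{\sG_x}$ commute with the $A$-isotypic decomposition $V=\bigoplus_\chi V_\chi$; $A$-semisimplicity, and a fixed character $\chi$ if assumed, are therefore inherited stalkwise. I foresee no substantial obstacle: the only delicate point is matching \emph{covariant} and \emph{contravariant} to \emph{sheaf} and \emph{cosheaf} correctly, and this is dictated unambiguously by which direction --- inclusion or quotient --- is available at the vector-space level.
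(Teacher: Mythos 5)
Your proposal is correct and takes essentially the same route as the paper's (very terse) proof: note that $\sG_x$ is normal in $G_x$, match the variance of the subgroup system to the direction of the natural inclusion on invariants or surjection on coinvariants, let $\rho$ itself supply the equivariant structure, and observe that discreteness and $A$-semisimplicity are inherited stalkwise. The paper states these same observations in three sentences and declares the remaining verifications ``immediate,'' which is exactly what your write-up fills in.
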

\begin{proof}
One of the invariant spaces $V^{\sG_x}$ and $V^{\sG_{\sX(f)x}}$ 
contains the other one. Which contains which depends on whether
the system of subgroups is contravariant or covariant.
More precisely,
a covariant system produces a sheaf,
while a contravariant system produces a cosheaf.
The action of $G$ is given by $\rho$ in both cases: $\vg_x\coloneqq \rho(x)$.

The coinvariant spaces $V_{\sG_x}$ and $V_{\sG_{\sX(f)x}}$
are connected by a natural surjection.
Similarly to invariants,
a contravariant system produces a sheaf,
while a covariant system produces a cosheaf.
The action of $G$ is again given by $\rho$. 

The last two statements are immediate.
\end{proof}

Cosheaves appear more suitable than sheaves for studying representations
in this simplicial environment. 
We turn our attention to cosheaves, 
commenting later on difficulties one faces with sheaves.
The simplicial homology complex of $\sX$
with coefficients in $\sC$ is defined similarly to 
Equation~(\ref{boundary_s3})
(cf. \cite{GeMa}):
$$
C^\sharp_n (\sX_\bullet , \sC)\coloneqq \Big \{ \sum_{x\in \sX_n} \alpha_x x \  
\Big | \ 
\alpha_x\in \sC_x,  
\mbox{ all but finitely many }
\alpha_x =0
\Big \} ,
$$
$$
d_0 \coloneqq 0, \ \ 
d_n \Big(\sum_{x\in \sX_n} \alpha_x x \Big) \coloneqq 
 \sum_{x\in \sX_n} \sum_{i=0}^n
(-1)^i
[\sC (\partial^i_n, x) (\alpha_x)] 
[\sX(\partial^i_n)(x)]
$$
for $n>0$.
Since degenerate simplices span a subcomplex
$(C_n^\flat (\sX_{(\bullet)} , \sC), d_n )$,
our key complex is the quotient complex
$$
C_k(\sX_{\bullet} , \sC) \coloneqq
C^\sharp_k(\sX_{\bullet} , \sC) / C^\flat_k (\sX_{\bullet} , \sC)
$$
spanned by linear combinations 
of non-degenerate simplices $\sum_{x\in \sX_{(n)}}\alpha_x [x]$. 
For an open subgroup $K\leq G$ we introduce 
the full subcategory $\sM(G)^K$
 of $\M(G)$
whose objects are smooth $G$-representations 
generated by their $K$-fixed vectors.
Also, let $\sM(G)^\circ$ be the union of various
$\sM(G)^K$. Its objects are those smooth representations
that are generated by $K$-fixed vectors
for some open subgroup $K \sset G$. Inside them we 
have the corresponding $A$-semisimple categories
$$
\M_A(G)^K, \; 
\M_A(G)^\circ , \; 
\M_{A, \chi}(G)^K \ 
\mbox{ and } \  
\M_{A, \chi}(G)^\circ .
$$

\begin{prop}
\label{chain_properties}
Let $\sC$ be a $G$-equivariant cosheaf on $\sX_\bullet$.
Let $x_1, x_2 \ldots$ be representatives of $G$-orbits
on $\sX_{(n)}$.
Then the  following statements hold: 
\begin{enumerate}
\item Chains $C_n (\sX_{\bullet} , \sC)$ 
and homologies $H_n (\sX_{\bullet} , \sC)$ are smooth $G$-representations.
\item There is an isomorphism of $G$-modules
$$
C_n (\sX_{\bullet} , \sC)
\cong
\bigoplus_k a-\ind_{G_{x_k}}^G \sC_{x_k}.
$$
\item If  $\sC$ is $A$-semisimple (with a character $\chi$),
then chains $C_n (\sX_{\bullet} , \sC)$ 
and homologies $H_n (\sX_{\bullet} , \sC)$
are $A$-semisimple (with a character $\chi$ respectively).
\item If  $\sC$ is discrete  and $\sX_{(n)}$ has finitely many
  $G$-orbits, then
chains $C_n (\sX_{\bullet} , \sC)$ and
homologies $H_n (\sX_{\bullet} , \sC)$ 
are in $\sM(G)^\circ$.
More precisely,  
$C_n (\sX_{\bullet} , \sC)$ and
$H_n (\sX_{\bullet} , \sC)$ 
are in $\sM(G)^K$
where $K=K_1\cap K_2 \cap \ldots \cap K_k$ and 
$K_i$ is the kernel of
the $G_{x_i}$-representation $\sC_{x_i}$.
\item If $\sX_{(n)}$ has finitely many $G$-orbits
and $\sC_{x_k}$ is finitely generated $G_{x_k}$-module for each $x_k$, 
then chains $C_n (\sX_{\bullet} , \sC)$ and
homologies $H_n (\sX_{\bullet} , \sC)$ 
are finitely generated $G$-modules. 
\item Suppose that for each $x\in \sX_n$,
  the stabiliser $G_x$ is compact
  modulo $A$ and the field $\F$ is $G_x/A$-ordinary. 
If  $\sC$ is $A$-semisimple (with a character $\chi$),
then
the space of chains $C_n (\sX_{\bullet} , \sC)$ is a projective 
object in $\M_A(G)$ (correspondingly in $\M_{A,\chi}(G)$). 
\end{enumerate}
\end{prop}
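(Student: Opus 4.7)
The plan is to combine part (2) of the proposition with Corollary~\ref{projective}. By (2) we have a $G$-module isomorphism
$$
C_n(\sX_\bullet,\sC)\;\cong\;\bigoplus_k a\text{-}\ind_{G_{x_k}}^G \sC_{x_k}
\;=\;\bigoplus_k \F G\otimes_{\F G_{x_k}} \sC_{x_k},
$$
so it suffices to show that each summand is a projective object in $\M_A(G)$ (respectively $\M_{A,\chi}(G)$), and then that a direct sum of projectives is projective.

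First I would check the hypotheses of Corollary~\ref{projective} for every orbit representative $x_k$. Continuity of the $G$-action on $\sX_\bullet$ makes $G_{x_k}$ open in $G$; by hypothesis $G_{x_k}$ is compact modulo $A$ and $\F$ is $G_{x_k}/A$-ordinary. Axiom (ii) of an equivariant cosheaf states that $\sC_{x_k}$ is a smooth representation of $G_{x_k}$, and the $A$-semisimplicity assumption on $\sC$ places $\sC_{x_k}$ in $\M_A(G_{x_k})$ (respectively $\M_{A,\chi}(G_{x_k})$). Corollary~\ref{projective} then yields that each $\F G\otimes_{\F G_{x_k}} \sC_{x_k}$ is projective in $\M_A(G)$ (respectively $\M_{A,\chi}(G)$).

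It remains to assemble the summands. In the abelian categories $\M_A(G)$ and $\M_{A,\chi}(G)$ arbitrary direct sums exist, and for any family $\{M_k\}$ one has $\Hom(\bigoplus_k M_k,-)\cong \prod_k \Hom(M_k,-)$; since each factor is exact (by projectivity of $M_k$) and products of exact functors to abelian groups are exact, the direct sum is projective. Applying this to $M_k = \F G\otimes_{\F G_{x_k}} \sC_{x_k}$ gives the claim.

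There is no substantial obstacle here: once part (2) is in hand, the argument is a bookkeeping reduction to Corollary~\ref{projective}, together with the standard fact that a direct sum of projectives is projective. The only thing worth flagging is to verify that in the $\M_{A,\chi}$ case the induced summands land in the correct subcategory, which is guaranteed by Lemma~\ref{functors}(2) because $\sC_{x_k}\in\M_{A,\chi}(G_{x_k})$.
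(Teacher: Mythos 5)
Your argument for part (6) is correct and follows the same route the paper takes: decompose $C_n(\sX_\bullet,\sC)$ using part (2) as a direct sum of $a$-induced modules $\F G\otimes_{\F G_{x_k}}\sC_{x_k}$, verify the hypotheses of Corollary~\ref{projective} (note also $A\leq G_{x_k}$ since $A$ acts trivially on $\sX_\bullet$), and conclude since an arbitrary direct sum of projectives is projective. The paper's proof is terse, declaring parts (1)--(5) straightforward and singling out only (6) as requiring Corollary~\ref{projective}, so your emphasis is well placed; for completeness you would still want at least a sentence disposing of (1)--(5), in particular establishing the isomorphism in (2) via $\vg\otimes\alpha\mapsto \vg\cdot(\alpha x_k)$, on which your argument relies.
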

\begin{proof}
  The $G$-action on the chains is defined as in
Equation~(\ref{action_s3}):
$$
\vg \cdot \Big(\sum_{x\in \sX_{(n)}} \alpha_x [x] \Big) \coloneqq 
 \sum_{x\in \sX_{(n)}} 
(-1)^{sign(R(\vg,x))}
\alpha_x \, [\vg \cdot x] \, .
$$
All statements are proved one by one from (1) to (6). 
Statement (6) requires
Corollary~\ref{projective}, while
the rest of the statements are
straightforward. 
\end{proof}

Let us examine the functors connecting cosheaves and representations.
The functors from representations to cosheaves are {\em localisation functors}: they produce an equivariant cosheaf, a local object from a representation. The easiest localisation functor is the trivial cosheaf:
$$
\sL: \sM(G) \rightarrow \Ch , \ \ 
\sL((\rho, V)) = \Vt.
$$
In the opposite direction, we have \emph{homology functors}
$$
\sH: \Ch \rightarrow \sM(G) , \ \ 
\sH(\sC) = H_0 (\sX_\bullet, \sC). 
$$
Let
$\Sigma \subset \mbox{Mor}(\Ch)$ 
be the class of those morphisms $f$ such that $\sH(f)$ is an isomorphism.
We get a functor from the category of left fractions
\cite[I.1.1]{GZ}:
$$
\sH[\Sigma^{-1}]: \Ch[\Sigma^{-1}] \rightarrow \sM(G).
$$ 
The category of fractions always exists and admits a natural fraction functor \linebreak
$\sQ_{\Sigma}: \Ch\rightarrow \Ch[\Sigma^{-1}]$.
However, in general this category is intractable. It needs to satisfy the left Ore conditions (or admit the left calculus of fractions in the terminology of Gabriel and Zisman \cite[I.2.2]{GZ}) to enable working with them:
\begin{lemma} \cite[I.3]{GZ}
\label{colimits}
Let $\mA$ be an abelian category, $\Sigma$ a class of morphisms
in it admitting a left calculus of fractions.
Then $\mA[\Sigma^{-1}]$ is an additive category with finite colimits.
\end{lemma}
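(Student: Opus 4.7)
The plan is to follow the standard Gabriel--Zisman construction from \cite{GZ}. Recall that under the left calculus of fractions, a morphism $X\to Y$ in $\mA[\Sigma^{-1}]$ is represented by an equivalence class $[f,s]$ of roofs $X\xrightarrow{f}Z\xleftarrow{s}Y$ with $s\in\Sigma$, two roofs being equivalent if they are dominated by a common third roof; composition $[g,t]\circ[f,s]$ is defined by choosing, via the left Ore property, morphisms $h$ and $u\in\Sigma$ with $uf = hs$ and setting the composite to be $[hg,ut]$. The fraction functor sends $f:X\to Y$ to $[f,1_Y]$.

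For additivity I would proceed in three steps. First, the zero object of $\mA$ stays a zero object in $\mA[\Sigma^{-1}]$: any two roofs $X\to Z\leftarrow 0$ are equivalent (pass to a common denominator and use that the map out of $0$ is unique), and dually for morphisms from $0$. Second, to add $[f_1,s_1],[f_2,s_2]:X\to Y$ I would apply the left Ore condition to $s_1,s_2$ to produce $u_1,u_2$ with $u_1s_1=u_2s_2\in\Sigma$, and set
\[
[f_1,s_1]+[f_2,s_2]\ :=\ [u_1f_1+u_2f_2,\ u_1s_1].
\]
Independence of the choices of $u_i$ and of the representatives $(f_i,s_i)$ is the content of the standard roof--equivalence lemma, and bilinearity with respect to composition reduces to bilinearity in $\mA$ after pulling both composable roofs over a common denominator. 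Third, the biproduct $X\oplus Y$ of $\mA$ satisfies the biproduct axioms in $\mA[\Sigma^{-1}]$ because the injections and projections are inverted only up to $\Sigma$, and their universal property on roofs reduces, again via common denominators, to the biproduct property in $\mA$.

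For finite colimits, since $\mA[\Sigma^{-1}]$ is additive it suffices to produce pushouts (equivalently, coequalizers, which together with $0$ generate all finite colimits). Given a span $X\xleftarrow{[f_1,s_1]}Z\xrightarrow{[f_2,s_2]}W$ in $\mA[\Sigma^{-1}]$ I would first use the left Ore condition to replace both arrows by roofs sharing a common denominator, and then replace $Z$ by the corresponding object in $\mA$ to lift the span, up to isomorphism in $\mA[\Sigma^{-1}]$, to an honest span $X'\leftarrow Z'\rightarrow W'$ in $\mA$. Form its pushout $P$ in $\mA$ and apply $Q_\Sigma$; I would then verify the universal property in $\mA[\Sigma^{-1}]$ by representing any competing cocone with common-denominator roofs and invoking the universal property of $P$ in $\mA$, checking independence of representatives via the roof equivalence.

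The main obstacle is the lifting step: a finite diagram in $\mA[\Sigma^{-1}]$ does not literally lift to $\mA$, and one must iteratively clear denominators along each edge, using that $\Sigma$ is closed under composition so that successive common denominators remain in $\Sigma$. Finiteness of the diagram is exactly what makes this iteration terminate; for infinite diagrams the argument would break down, which is why the lemma is stated only for finite colimits.
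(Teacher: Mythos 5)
Your proof follows the Gabriel--Zisman approach cited by the paper, and the main structure (roof addition via the left Ore condition, additivity of $\mA[\Sigma^{-1}]$, lifting spans and taking pushouts in $\mA$) is sound. Two of the verification steps are muddled, though. The biproduct step needs no separate argument: once $Q_\Sigma$ is additive, biproducts are preserved automatically, because a biproduct is characterised by the equations $p_a i_b=\delta_{ab}$ and $i_1p_1+i_2p_2=\mathrm{id}$, which any additive functor preserves; the phrase ``injections and projections are inverted only up to $\Sigma$'' does not describe anything actually happening. In the pushout step, for a span $X\xleftarrow{[f_1,s_1]}Z\xrightarrow{[f_2,s_2]}W$ there is no common denominator to form---the denominators $s_1$ and $s_2$ have different sources $X$ and $W$---and one does not ``replace $Z$''; instead, observe that $Q_\Sigma(s_1)$ and $Q_\Sigma(s_2)$ are already isomorphisms, so the span is isomorphic in $\mA[\Sigma^{-1}]$ to the image of the honest span $A_1\xleftarrow{f_1}Z\xrightarrow{f_2}A_2$ in $\mA$. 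With that fix, the ``iteration'' worry in your last paragraph disappears. You also leave the uniqueness half of the pushout universal property implicit; that is precisely where both Ore axioms (square-filling and left cancellation) do real work, and it should be spelled out. Finally, note that the paper itself suggests a shortcut immediately after the lemma: once additivity is established it suffices to exhibit cokernels, and the formula $\mathrm{coker}\bigl(Q(s)^{-1}Q(f)\bigr)=Q(\mathrm{coker}(f))\circ Q(s)$ produces them in one step, bypassing the span-lifting entirely.
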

In particular, there are cokernels in $\mA[\Sigma^{-1}]$.
An instructive exercise is to show that for a morphism $s^{-1} f$ in $\mA[\Sigma^{-1}]$ the composition $\mbox{coker}(f)s$ is its cokernel, yet
$\mbox{ker}(f)$ is not necessarily its kernel. To obtain a kernel one needs the right calculus of fractions. If $\Sigma$ admits both left and right calculi of
fractions, then $\mA[\Sigma^{-1}]$ is abelian \cite[I.3.6]{GZ}.

We are ready for the main theorem of the section, which is a
generalisation of Localisation Theorem by Schneider and Stuhler \cite[Theorem V.1]{SS2}.
We follow their strategy in our proof. It is important to notice that
no restriction on $\F$ appears in the theorem.

\begin{thm} (Localisation Theorem) 
\label{local}
Consider a continuous action of the locally compact totally
disconnected group $G$ on a simplicial set $\sX_\bullet$,
where the central subgroup $A$ acts trivially. 
The following statements hold.
\begin{enumerate}
\item The class $\Sigma$ of morphisms $f$ in $\Ch$
such that $\sH (f)$ is an isomorphism
admits a calculus of left fractions.
\item 
$\sH[\Sigma^{-1}]: \Ch[\Sigma^{-1}] \rightarrow \sM(G)$
is conservative, i.e., a morphism $f$ is an isomorphism if and only if
$\sH[\Sigma^{-1}](f)$ is an isomorphism.
\item 
$\sH[\Sigma^{-1}]$ commutes with colimits.
\item 
$\sH[\Sigma^{-1}]$ is faithful, i.e., injective on morphisms.
\end{enumerate}
If $|\sX|$ is connected, then the following three statements hold:
\begin{enumerate}
\item[(5)] 
$\sH[\Sigma^{-1}]: \Ch[\Sigma^{-1}] \rightarrow \sM(G)$
is an equivalence of categories. 
\item[(6)] $\sQ_{\Sigma}\circ \sL$ is a quasi-inverse of
  $\sH[\Sigma^{-1}]$.
\item[(7)] These equivalences restrict to equivalences
$\ChA[\Sigma_A^{-1}] \xrightarrow{\cong} \sM_A(G)$
and
$\ChAc[\Sigma_{A,\chi}^{-1}] \xrightarrow{\cong} \sM_{A,\chi}(G)$
where $\Sigma_{A}$ and $\Sigma_{A,\chi}$
are intersections of $\Sigma$ with the corresponding subcategories.
\end{enumerate}
\end{thm}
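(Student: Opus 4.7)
The plan is to recognise $\sH \dashv \sL$ as an adjoint pair and then let the theory of localisation do most of the work. A cosheaf morphism $\phi: \sC \to \Vt$ is a family $\phi_x: \sC_x \to V$ compatible with corestrictions and the group action; since $\sL(V) = \Vt$ has constant stalks, the higher-dimensional components of $\phi$ are forced to be pullbacks of the vertex data, and the corestriction relations across 1-simplices are precisely the defining relations of $H_0(\sX_\bullet, \sC)$ inside $\bigoplus_{v \in \sX_{(0)}} \sC_v$. Thus $\Hom_\Ch(\sC, \sL(V)) \cong \Hom_{\sM(G)}(\sH(\sC), V)$, so $\sH$ is a left adjoint to $\sL$. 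The counit $\epsilon_V : \sH\sL(V) = V \otimes_\F H_0(|\sX|, \F) \to V$ is an isomorphism exactly when $|\sX|$ is connected, and in that case the triangle identity forces $\sH(\eta_\sC)$ to be an isomorphism for every $\sC$; equivalently $\eta_\sC \in \Sigma$.

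For Part (1), I would verify the Gabriel--Zisman axioms for a calculus of left fractions by exploiting that $\sH$, as a left adjoint, preserves all colimits. The extension axiom follows by forming the simplexwise pushout $\sD' = \sC' \sqcup_\sC \sD$ in $\Ch$: applying $\sH$ produces a pushout in $\sM(G)$ in which $\sH(s)$ is an isomorphism, so the parallel edge $\sD \to \sD'$ is also sent to an isomorphism and hence lies in $\Sigma$. For the cancellation axiom, from $sf = sg$ with $s \in \Sigma$ one deduces $\sH(f - g) = 0$; then the cokernel projection $t : \sC' \to \mathrm{coker}(f - g)$ satisfies $tf = tg$, and right exactness of $\sH$ shows $\sH(t)$ is an isomorphism, so $t \in \Sigma$. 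Parts (2), (3), (4) then follow: conservativity, because every morphism in $\Ch[\Sigma^{-1}]$ is of the form $s^{-1}g$, and $\sH[\Sigma^{-1}](s^{-1}g)$ iso forces $\sH(g)$ iso, i.e.\ $g \in \Sigma$; commutation with colimits, because $\sQ_\Sigma$ and $\sH$ both preserve colimits; faithfulness, by another run of the cokernel argument applied to parallel $f, g : \sC \to \sC'$ with $\sH(f) = \sH(g)$, producing $t \in \Sigma$ with $tf = tg$ and hence $f = g$ in $\Ch[\Sigma^{-1}]$.

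For Parts (5) and (6), the connectedness hypothesis makes $\epsilon_V$ an honest isomorphism, so the triangle identities give natural isomorphisms $\sH[\Sigma^{-1}] \circ (\sQ_\Sigma \circ \sL) \cong \mathrm{Id}$ and $(\sQ_\Sigma \circ \sL) \circ \sH[\Sigma^{-1}] \cong \mathrm{Id}$, the latter using that $\sQ_\Sigma(\eta_\sC)$ is an isomorphism because $\eta_\sC \in \Sigma$. For Part (7), both $\sL$ and $\sH$ preserve $A$-semisimplicity and a fixed character $\chi$: $\sL$ trivially because its stalks are copies of $V$, and $\sH$ because $H_0(\sX_\bullet, \sC)$ is a quotient of $\bigoplus_x \sC_x$ by an $A$-equivariant submodule (using that $A$ acts trivially on $\sX_\bullet$). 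Taking $\Sigma_A$ and $\Sigma_{A,\chi}$ as the restrictions of $\Sigma$, the same formal arguments yield the equivalences $\ChA[\Sigma_A^{-1}] \xrightarrow{\cong} \M_A(G)$ and $\ChAc[\Sigma_{A,\chi}^{-1}] \xrightarrow{\cong} \M_{A,\chi}(G)$.

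The main obstacle I expect is making the right exactness of $\sH$ and its preservation of simplexwise colimits fully rigorous through the quotient by the degenerate subcomplex; the chain functor $C_n^\sharp(\sX_\bullet, -)$ is visibly exact in the coefficient cosheaf, but descending to the nondegenerate quotient and then to $H_0$ deserves careful bookkeeping before the cokernel manipulations can be deployed. Once these categorical housekeeping details are in place, the entire theorem is a consequence of the adjunction $\sH \dashv \sL$ combined with Gabriel--Zisman's calculus of fractions.
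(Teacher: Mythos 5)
Your route is genuinely different from the paper's. The paper observes that $\sH=H_0(\sX_\bullet,-)$ is right exact (from the long exact sequence in homology), cites Gabriel--Zisman \cite[I.3.4]{GZ} for (1)--(3), runs a cokernel argument inside $\Ch[\Sigma^{-1}]$ for (4), and for (5)--(6) constructs the counit from the exact sequence $C_1(\sX_\bullet,\F)\to C_0(\sX_\bullet,\F)\to\F\to 0$ tensored with $V$, together with an explicit natural transformation $\gamma$ checked stalkwise. You instead recognise the adjunction $\sH\dashv\sL$, from which colimit preservation, $\eta_\sC\in\Sigma$ (via the triangle identity once the counit is an isomorphism, i.e.\ once $|\sX|$ is connected), and the quasi-inverse in (5)--(6) all fall out simultaneously. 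This is cleaner, gives a bit more ($\sH$ preserves \emph{all} colimits, not merely finite ones), and produces the correct unit $\eta_{\sC,x}:\alpha\mapsto[\sC(f^n_0,x)(\alpha)\cdot\sX(f^n_0)x]$; note the paper's $\gamma(\sC)_x=0$ for $n>0$, as written, fails the corestriction square $\gamma(\sC)_v\circ\sC(\partial^i,e)=0$ even for the constant cosheaf, so your version is the one that actually compiles.

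The item you dismiss as ``bookkeeping'' is the load-bearing wall and must be written out: the bijection $\Hom_\Ch(\sC,\sL(V))\cong\Hom_{\sM(G)}(H_0(\sX_\bullet,\sC),V)$. You need (a) that the vertex components of a cosheaf morphism to a constant cosheaf kill $\im(d_1)$, immediate from the edge squares; (b) the converse extension $\phi_x:=\phi_{\sX(f^n_0)x}\circ\sC(f^n_0,x)$ is well defined, i.e.\ independent of which vertex of $x$ is chosen, by routing through the 1-simplex $\sX(g)x$ joining vertices $0$ and $i$ --- and this $1$-simplex may be degenerate, which is absorbed because normalisation is a quasi-isomorphism so $\im(d_1)$ on nondegenerate chains already equals $\im(d_1^\sharp)$ on all chains; and (c) that the $G$-equivariance conditions on both sides agree, which holds because the $G$-action on $H_0$ is induced from the stalkwise action on $C_0$. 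None of this is deep, but it is precisely the content of your adjunction and should appear explicitly rather than be waved at.
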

\begin{proof}
A short exact sequence of cosheaves gives rise to a long exact
sequence in homology. Consequently, the functor $\sH$ is right exact. Hence, 
it commutes with finite direct limits (cf. \cite[Prop. 3.3.3]{KaS}, the statement proved there is that a left exact functor commutes with finite inverse limits. Apply the
opposite categories to dualise it). The first three statements follow \cite[I.3.4]{GZ}.

Suppose $\sH[\Sigma^{-1}](f)=\sH[\Sigma^{-1}](f^\prime)$
for two morphisms $f$ and $f^\prime$. To prove that $f=f^\prime$ it suffices to show that $\mbox{coker}(f-f^\prime)$ is an isomorphism
(cokernels exist by Lemma~\ref{colimits}).
By (3), $\sH[\Sigma^{-1}](\mbox{coker}(f-f^\prime))=
\mbox{coker}(\sH[\Sigma^{-1}](f)-\sH[\Sigma^{-1}](f^\prime))
= \mbox{coker}(0)$ is an isomorphism.
By (2) $\mbox{coker}(f-f^\prime)$ is an isomorphism.
This proves (4).

Since $|\sX|$ is connected, we have an exact sequence
$$
C_1 (\sX_\bullet , \F) \xrightarrow{d_1}
C_0 (\sX_\bullet , \F) \xrightarrow{w} \F \rightarrow 0, \ \ 
w\Big(\sum_x \alpha_x x \Big) = \sum_x \alpha_x.
$$
Observe that for a smooth $G$-representation $V$
the tensor product $C_k (\sX_\bullet , \F)\otimes V$ is naturally isomorphic as a $G$-representation to $C_k (\sX_\bullet , \Vt)$. Hence, tensoring with $V$ produces
another exact sequence
$$
C_1 (\sX_\bullet , \Vt) \xrightarrow{d_1}
C_0 (\sX_\bullet , \Vt) \rightarrow V \rightarrow 0
$$
that gives a natural isomorphism
$\sH[\Sigma^{-1}] \circ (\sQ_{\Sigma} \circ \sL ) \cong
\mbox{Id}_{\sM(G)}$: 
$$
\sH[\Sigma^{-1}] (\sQ_{\Sigma} (\sL (V)))
\cong
\sH(\sL (V))
=H_0 (\sX_\bullet , \Vt) \xrightarrow{\cong}
V.
$$
In the opposite direction, we need a natural transformation
$$
\gamma: \mbox{Id}_{\Ch[\Sigma^{-1}]} \rightarrow 
(\sQ_{\Sigma} \circ \sL ) \circ \sH[\Sigma^{-1}]
$$
that we define in $\Ch$ for each cosheaf $\sC$ by
$$
\gamma (\sC)_x \coloneqq 
\left\{ \begin{array}{ccc} 
\sC_x\ni\alpha \mapsto 0 \in \sH(\sC) & \mbox{ if } & x\in \sX_n,
\ n>0\, ,
\\ 
\sC_x\ni\alpha \mapsto [\alpha x] \in \sH(\sC) & \mbox{ if } & x\in
\sX_0\, .
\end{array}\right.
$$
Observe that $\sH (\gamma (\sC))$
is an isomorphism. By (2), $\gamma (\sC)$
is an isomorphism, so $\gamma$ is a natural isomorphism.
This proves (5) and (6).

To attack (7), observe a fine difference between
$\ChA[\Sigma^{-1}]$
and $\ChA[\Sigma_A^{-1}]$. The former is a full subcategory of
$\Ch[\Sigma^{-1}]$, 
while the latter is the category of fractions of $\ChA$. 
They are connected by a natural functor \linebreak
$\sN: \ChA[\Sigma_A^{-1}] \rightarrow\ChA[\Sigma^{-1}]$,
identical on objects and morphisms. Clearly, $\sN$ is an equivalence.
It remains to observe $\sH (\ChA[\Sigma^{-1}])\subseteq \sM_A(G)$
and $\sQ_{\Sigma} (\sL(\sM_A(G)))\subseteq \ChA[\Sigma^{-1}]$.
Both inclusions are straightforward.
\end{proof}

Theorem~\ref{local} may or may not bring
any new information about representations of $G$ to the table.
For instance, any $G$ acts on the point. Then this theorem is a
tautology, producing the identity functor on $\sM(G)$. 
Another interesting thought experiment is to replace $G$ with a
product $G\times H$ 
where $H$ acts trivially on $\sX_\bullet$. All information about the
$H$-action 
in $\sM(G\times H)$
is swiped under the carpet in 
${{\mathrm C}{\mathrm s}{\mathrm h}_{G\times H}(\sX_\bullet)}$: 
$H$ needs to act somehow on all $\sC_x$ for all equivariant
cosheaves. 
On the other hand, Theorem~\ref{mainthm} demonstrates that
the localisation over simplicial sets can provide new non-trivial information. 

Can we trim down the category of cosheaves by using systems of
subgroups? If $\sG_x$ is a contravariant system of subgroups,
we have an exact sequence
\begin{equation}
\label{seq1}
C_1 (\sX_\bullet , \VtI) \xrightarrow{d_1}
C_0 (\sX_\bullet , \VtI) \xrightarrow{w} V, \ \ 
w\Big(\sum_x \alpha_x x \Big) = \sum_x \alpha_x.
\end{equation}
Using it, we can get a version of Theorem~\ref{local} for discrete cosheaves.
Let $\Sigma^{\circ}$, $\Sigma_A^{\circ}$ and $\Sigma_{A,\chi}^{\circ}$ be
the intersections of $\Sigma$
with $\dCh$, $\dChA$ and $\dChAc$ correspondingly.
\begin{cor}
  Suppose that $|\sX|$ is connected
  and there are finitely many $G$-orbits
on $\sX_0$. 
Suppose further that for any representation $V\in \sM(G)^\circ$
there exists an open contravariant system
of subgroups $\sG$ such that the following variation of sequence~{(\ref{seq1})} is exact:
$$
C_1 (\sX_\bullet , \VtI) \xrightarrow{d_1}
C_0 (\sX_\bullet , \VtI) \xrightarrow{w} V
\rightarrow 0 
.$$
Then the functor $\sH[\Sigma^{-1}]$
provides equivalences
$
\dCh[\Sigma^{\circ\; -1}] \xrightarrow{\cong} \sM(G)^\circ$,
$$
\dChA[\Sigma_A^{\circ\;-1}] \xrightarrow{\cong} \sM_A(G)^\circ, 
\ \mbox{ and } \ 
\dChAc[\Sigma_{A,\chi}^{\circ\;-1}] \xrightarrow{\cong} \sM_{A,\chi}(G)^\circ.
$$
\end{cor}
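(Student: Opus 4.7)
The plan is to mimic the seven-step proof of Theorem~\ref{local} in the discrete setting, with the given hypothesis playing the role that the trivial exact sequence $C_1(\sX_\bullet,\Vt)\to C_0(\sX_\bullet,\Vt)\to V\to 0$ plays there.

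First I would check that $\sH$ sends $\dCh$, $\dChA$, $\dChAc$ into $\sM(G)^\circ$, $\sM_A(G)^\circ$, $\sM_{A,\chi}(G)^\circ$ respectively. By Proposition~\ref{chain_properties}(4), $C_0(\sX_\bullet,\sC)\in\sM(G)^\circ$ for any discrete $\sC$, using the assumed finiteness of $G$-orbits on $\sX_0=\sX_{(0)}$; the quotient $H_0$ retains this property because $\sM(G)^K$ is closed under quotients (a surjection sends $K$-fixed generators to $K$-fixed generators). The $A$-semisimple variants are handled by Proposition~\ref{chain_properties}(3). Parts (1)--(4) of Theorem~\ref{local} then transfer verbatim: the categories $\dCh$, $\dChA$, $\dChAc$ are abelian (kernels and cokernels preserve discreteness when computed simplexwise), the restricted $\sH$ is still right exact, and $\Sigma^{\circ}$ with its refinements admits a left calculus of fractions, with $\sH[\Sigma^{\circ\;-1}]$ conservative, colimit-preserving, and faithful.

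Essential surjectivity onto the ``$\circ$'' subcategories is precisely the content of the hypothesis: for $V\in\sM(G)^\circ$, choose the open contravariant $\sG$ given by the hypothesis; then $\VtI$ is discrete by Proposition~\ref{cosheaf}(3), with the $A$-semisimple refinements tracked by Proposition~\ref{cosheaf}(4), and the assumed exactness yields $\sH(\VtI)\cong V$.

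The main obstacle I anticipate is fullness. The original proof's quasi-inverse $\sQ_\Sigma\circ\sL$ uses the trivial cosheaf $\sL(V)=\Vt$, which is not discrete; one must replace it by an assignment $V\mapsto\VtI$ with $\sG$ chosen per the hypothesis, and construct a natural transformation $\gamma(\sC):\sC\to\VtI[\sH(\sC)]$ in $\dCh$ by adapting the formula of the original proof: $\alpha\in\sC_x$ maps to $0$ for $x\in\sX_n$ with $n>0$ and to $[\alpha x]\in\sH(\sC)$ for $x\in\sX_0$. Two subtleties arise. First, one must verify that $[\alpha x]$ actually lies in the invariants $V^{\sG_x}$, which by contravariance of $\sG$ reduces to checking that $(g\alpha-\alpha)\,[x]$ is a boundary in the chain complex for every $g\in\sG_x$; this is the genuinely new piece of input that is not present in the proof of Theorem~\ref{local}. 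Second, one must make the assignment $V\mapsto\VtI$ into a functor well-defined up to canonical isomorphism in $\dCh[\Sigma^{\circ\;-1}]$ despite the non-canonical choice of $\sG$; this follows from faithfulness of $\sH[\Sigma^{\circ\;-1}]$ together with conservativity, once any two choices are connected through a common third cosheaf produced by applying $\gamma$ to either. Granting these, $\sH(\gamma(\sC))$ is the identity on $\sH(\sC)$, so conservativity promotes $\gamma(\sC)$ to an isomorphism in $\dCh[\Sigma^{\circ\;-1}]$, completing the equivalence for all three categories.
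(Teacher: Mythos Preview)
Your approach works harder than necessary and leaves a genuine gap at exactly the point you flag. The paper does not redo Theorem~\ref{local} inside $\dCh$; instead it \emph{restricts} the already established equivalence $\sH[\Sigma^{-1}]:\Ch[\Sigma^{-1}]\xrightarrow{\cong}\sM(G)$ to a full subcategory. Concretely, as in part~(7) of that theorem, one identifies $\dCh[\Sigma^{\circ\,-1}]$ with the full subcategory $\dCh[\Sigma^{-1}]\subset\Ch[\Sigma^{-1}]$. Fullness and faithfulness are then automatic, so only two checks remain: that $\sH$ sends discrete cosheaves into $\sM(G)^\circ$ (your first paragraph, via Proposition~\ref{chain_properties}(3),(4)), and essential surjectivity. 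For the latter, given $V\in\sM(G)^\circ$ one picks the open contravariant system $\sG$ provided by the hypothesis, so that $\VtI$ is discrete by Proposition~\ref{cosheaf}(3) and $\sH(\VtI)\cong V$; since also $\sH(\Vt)\cong V$, the two cosheaves $\Vt$ and $\VtI$ are isomorphic in $\Ch[\Sigma^{-1}]$, and hence the quasi-inverse $\sQ_\Sigma\circ\sL$ lands (up to isomorphism) in the discrete subcategory. No direct construction of a map $\sC\to\VtI[\sH(\sC)]$ inside $\dCh$ is needed.

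By contrast, your proposed $\gamma(\sC):\sC\to\VtI[\sH(\sC)]$ need not exist as a morphism of cosheaves. The system $\sG$ is chosen for $V=\sH(\sC)$ with no reference to $\sC$, so there is no reason for $[\alpha x]\in H_0(\sX_\bullet,\sC)$ to be $\sG_x$-invariant: discreteness of $\sC$ only guarantees invariance under the open kernel $K_x$ of the $G_x$-action on $\sC_x$, and $\sG_x$ may well be larger than $K_x$. Your reduction to ``$(g\alpha-\alpha)[x]$ is a boundary'' is correct but does not hold in general, and nothing in the hypotheses supplies it. The paper's route through the ambient equivalence avoids this difficulty entirely, and also dispenses with your second subtlety about the non-canonical choice of $\sG$: once $\VtI\cong\Vt$ in $\Ch[\Sigma^{-1}]$, functoriality is inherited from $\sQ_\Sigma\circ\sL$.
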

\begin{proof}
  Similarly to the proof of Theorem~\ref{local},
the difference between $\dCh[\Sigma^{-1}]$
and $\dCh[\Sigma^{\circ\,-1}]$ is immaterial.
Parts (3) and (4) of Proposition~\ref{chain_properties}
tell us that
$\sH[\Sigma^{-1}]$
is a well-defined functor
$\dCh[\Sigma^{\circ\; -1}] \rightarrow \sM(G)^\circ$.
Ditto for the $A$-semisimple categories.

If $V\in\sM(G)^\circ$, we pick the aforementioned (in the statement) 
system of subgroups $\sG$.
Then the trivial cosheaf
$\sQ_{\Sigma} (\sL(V))=\Vt$
is isomorphic to the cosheaf $\VtI$ in $\Ch[\Sigma^{-1}]$.
The latter cosheaf is discrete because
$G_x$ acts on $\VtI_x$ via the discrete quotient
$G_x/\sG_x$.
It is easy to see $A$-semisimplicity through as well.
\end{proof}

If $V$ is admissible and $\sG$ is compact open,
then
the cosheaf $\VtI$ is finite dimensional, i.e.,
each vector space $\VtI_{\, x}$ is finite dimensional.
Thus, it has a chance of giving us a resolution
of $V$ by finitely generated projective modules.
We will address this problem in the next section.

Finally, let us comment why we think
cosheaves are better than sheaves
for our studies.
If $\sF$ is an equivariant sheaf,
the cohomology
$C^n(\sX,\sF)$ is not necessarily a smooth
representation of $G$.
Taking its smooth part, one gets
a smooth cohomology complex
$C_{sm}^\bullet (\sX,\sF)$, whose relation
to the topology of $\sX$ is more remote
than of the original complex
$C^\bullet (\sX,\sF)$.
In particular, one could expect a subtle, yet fruitful
interplay between
$C^\bullet(\sX,\Vl)$, $C_{sm}^\bullet(\sX,\Vl)$ and $V$,
but it remains to be seen whether this mesh is
capable of producing something useful, for instance,
injective resolutions of $V$.

\section{Schneider-Stuhler Resolution} \label{fgres}

We call a finitely generated projective resolution
of the form $C_\bullet(\sX_\bullet ,\VtI)$
{\em a Schneider-Stuhler resolution}, acknowledging their construction
for $p$-adic reductive groups \cite{SS2}.
Where do suitable (for such resolutions) systems of subgroups come from?

Denote by 
$f_i^n: [0] \rightarrow [n]$
the function $f_i^n(0)=i$. 
Suppose we are given a compact open subgroup $\sG_x$ for each vertex $x\in \sX_0$
such that
\begin{enumerate}
\item $\sG_{\vg x} =\vg \sG_x \vg^{-1}$ for all $\vg\in G$, $x\in\sX_0$ and
\item $\sG_x \sG_y = \sG_y \sG_x$ if $x$ and $y$ are adjacent,
  i.e., $x=\sX (f_0^1)(w),\;y=\sX (f_1^1)(w)$ for some $w\in\sX_1$.
\end{enumerate}
Condition (2) allows us to extend this collection of subgroups
to a compact open contravariant system of subgroups by taking products over
vertices:
$$
\sG_x \coloneqq \sG_{\sX(f_0^n)x} \sG_{\sX(f_1^n)x} \cdots \sG_{\sX(f_n^n)x}
\ \mbox{ for all } \ x\in\sX_n.
$$
We call a compact open contravariant system obtained by this construction
from some initial choice of subgroups {\em an exquisite system}.

If the field $\F$ is $\sG_x$-ordinary for each $x\in\sX_0$,
then it is $\sG_x$-ordinary for each $x\in\sX_\bullet$ as soon as we deal
with an exquisite system.  As observed by Meyer and Solleveld \cite{MeSo}, 
this gives us idempotents
$\L_x \coloneqq \L_{\sG_x} \in \sH= \sH (G, \F, \mu)$
for a suitable choice of $\mu$ (notation of Section~\ref{zero}).
Not only are these idempotents convenient for calculations
but also they control the invariants:
$V^{\sG_x}=\L_x\ast V$. The following lemma is proved for
Bruhat-Tits buildings of $p$-adic groups 
by Meyer and Solleveld \cite[Lemma 2.6]{MeSo}: 
\begin{lemma} \label{prop_id}
The collection $\L_x,\; x \in \sX_\bullet$ of idempotents
arisen from an exquisite system of subgroups
satisfies the following identities:
\begin{enumerate}
\item $\L_x\star \L_y = \L_y \star \L_x$ if $x,y\in\sX_0$ are adjacent.
\item $\L_x = \L_{\sX(f_0^n)x} \star \L_{\sX(f_1^n)x} \star \ldots \star \L_{\sX(f_n^n)x} \ $
for all  $x\in\sX_n$.
\item $\L_{\vg \cdot x}= \,^\vg{\L_x}^{\vg^{-1}}$ for all $\vg \in G$, $\vx\in\sX_\bullet$.
\end{enumerate}
\end{lemma}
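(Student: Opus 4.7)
The plan is to derive all three identities from a single analytic input: for any two compact open subgroups $H, K \leq G$ with $HK = KH$, one has $\L_H \star \L_K = \L_{HK}$. I would prove this preliminary by observing that $HK$ is itself a compact open subgroup; that $\L_H \star \L_K$ is supported on $HK$, is left $H$-invariant and right $K$-invariant, and hence constant on the single double coset $HK$; and that evaluating at the identity yields $\mu(H \cap K)/(\mu(H)\mu(K))$. Comparing with the standard index identity $\mu(H)\mu(K) = \mu(H \cap K)\mu(HK)$, equivalent to $[HK:K] = [H:H\cap K]$, pins down the constant and gives the preliminary.

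With this in hand, (1) is immediate: the adjacency hypothesis on $x,y \in \sX_0$ is exactly the commutation $\sG_x\sG_y = \sG_y\sG_x$ built into the definition of an exquisite system, so both $\L_x \star \L_y$ and $\L_y \star \L_x$ coincide with $\L_{\sG_x\sG_y}$. For (2), the key combinatorial observation is that any two vertices $\sX(f_i^n)x$ and $\sX(f_j^n)x$ of an $n$-simplex $x \in \sX_n$ are adjacent: they are the endpoints of the edge $\sX(g_{ij})(x) \in \sX_1$ where $g_{ij}:[1]\to[n]$ sends $0 \mapsto i$ and $1 \mapsto j$. Hence the vertex subgroups $\sG_{\sX(f_i^n)x}$ pairwise commute, so (using commutativity to ensure the accumulated product remains a subgroup at every stage) an induction on $n$ based on the preliminary yields
\begin{equation*}
\L_{\sX(f_0^n)x} \star \cdots \star \L_{\sX(f_n^n)x} \; = \; \L_{\sG_{\sX(f_0^n)x}\cdots \sG_{\sX(f_n^n)x}} \; = \; \L_{\sG_x} \; = \; \L_x,
\end{equation*}
the middle equality being the definition of $\sG_x$ for an exquisite system.

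For (3), the $G$-equivariance of $\sG$ reads $\sG_{\vg x} = \vg \sG_x \vg^{-1}$, and the claim reduces to the change-of-variable identity $\L_{\vg U \vg^{-1}}(\vy) = \L_U(\vg^{-1}\vy\vg)$ for a compact open subgroup $U$, repackaged via the left--right translation notation of Section~\ref{zero}; this is a routine computation with indicator functions provided $\mu(\vg U \vg^{-1}) = \mu(U)$, which holds in the unimodular setting relevant to our intended Kac-Moody applications (cf.\ Theorem~\ref{unimod}). I expect (1) to carry the only substantive argument, namely the measure identity for commuting compact open subgroups; (2) is then a formal induction once pairwise adjacency of the vertices of a simplex is noted, and (3) is essentially bookkeeping for the conjugation action of $G$.
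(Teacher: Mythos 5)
Your proposal matches the paper's argument in substance. For (1), the paper computes the convolution integral pointwise to get $\L_x\star\L_y=\L_{\sG_x\sG_y}$ directly; you obtain the same identity by noting support on $\sG_x\sG_y$, $\sG_x$-left and $\sG_y$-right invariance, and evaluation at $e$ together with the index identity $\mu(H)\mu(K)=\mu(H\cap K)\mu(HK)$ — a cosmetically different but equivalent route. For (2), both proofs are a short induction on (1); your explicit remark that any two vertices $\sX(f_i^n)x$, $\sX(f_j^n)x$ are adjacent (as faces of $\sX(g_{ij})(x)$), so that the vertex subgroups commute pairwise, spells out a step the paper leaves tacit.

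For (3) you have noticed something the paper brushes aside as ``obvious''. As you say, the identity $\L_{\vg\cdot x}={}^\vg\L_x^{\vg^{-1}}$ (interpreted as conjugation) amounts to $\mu(\vg\sG_x\vg^{-1})=\mu(\sG_x)$, and from the proof of Proposition~\ref{unimod_cr} one has $\mu(\vg\sG_x\vg^{-1})=\Delta(\vg)^{-1}\mu(\sG_x)$, so (3) as written genuinely requires $\Delta(\vg)=1$; in general one picks up a $\Delta(\vg)$ factor. Since the lemma's stated hypotheses in Section~5 do not include unimodularity, flagging this — and noting it is harmless for the intended Kac--Moody applications by Theorem~\ref{unimod} — is a correct and useful refinement rather than a deviation.
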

\begin{proof}
By definition 
\begin{equation} 
\label{comm_id}
\L_x\star \L_y (\vg) = \int_G \L_x(\vh) \L_y(\vh^{-1}\vg) \mu (d \vh).
\end{equation}
The integrand vanishes unless $\vh\in \sG_x,\; \vh^{-1}\vg\in\sG_y$. 
Thus  $\L_x\star \L_y$ is supported on $\sG_x \sG_y$.
Moreover,
$\vh^{-1}\vg\in\sG_y$ translates into 
$\vh\in\vg \sG_y$ so that
(\ref{comm_id}) becomes
\begin{equation} 
\label{comm_id2}
\int_{\sG_x\cap\vg\sG_y} \L_x(\vh) \L_y(\vh^{-1}\vg) \mu (d \vh)
=
\frac{\mu(\sG_x\cap\vg\sG_y)}{\mu(\sG_x)\mu(\sG_y)}.
\end{equation}
Decomposing $\vg=\vh (\vh^{-1}\vg)$ for some $\vh\in \sG_x,\; \vh^{-1}\vg\in\sG_y$,
(\ref{comm_id2}) becomes
$$
\frac{\mu(\sG_x\cap\vh\sG_y)}{\mu(\sG_x)\mu(\sG_y)}
=
\frac{\mu(\vh^{-1}(\sG_x\cap\vh\sG_y))}{\mu(\sG_x)\mu(\sG_y)}
=
\frac{\mu(\sG_x\cap\sG_y)}{\mu(\sG_x)\mu(\sG_y)}
=
\frac{1}{|\sG_x : (\sG_x\cap\sG_y)|\mu(\sG_y)}
=
$$
$$
\frac{1}{|\sG_x\sG_y:\sG_y|\mu(\sG_y)}
=
\frac{1}{\mu(\sG_x\sG_y)}
= \L_{\sG_x\sG_y} (\vg).
$$
Since $\sG_x\sG_y=\sG_y\sG_x$, we have proved not only (1) but a stronger
equation
\begin{equation} 
  \label{comm_id3}
  \L_x\star \L_y= \L_{\sG_x\sG_y}=\L_y\star \L_x.
\end{equation}
Statement (2) follows from Equation~(\ref{comm_id3})
by an easy induction and the last statement is obvious.
%
%
%
%
\end{proof}

Let $|\X|$ be the geometric realisation of the simplicial set $\sX_\bullet=(\X_n)$. 
For a non-degenerate $x \in \sX_{(n)}$
we denote the corresponding simplex in $|\sX|$ by
$\mathring{\D}_n\times x$ 
and its points by
${\bf{x}} = (\alpha, x)$,
${\bf{y}} = (\alpha, y)$, etc. 
A particular point of interest is the centre
$\mathring{x}= ((\frac{1}{n+1},\ldots \frac{1}{n+1}),x)$ 
(see Section~\ref{two}).

We make an additional assumption that $|\sX|$ admits a CAT(0)-metric. 
Then $|\sX|$ is a unique geodesic space \cite{Bri},
in particular, any two points ${\bf{x}}, {\bf{y}} \in |\sX|$
can be connected by a unique geodesic, which we denote by
$[{\bf{x}},{\bf{y}}]$.
A subset $Y\sset |\sX|$ is called {\em convex}
if $[{\bf{x}},{\bf{y}}]\sset Y$
for all ${\bf{x}},{\bf{y}}\in Y$.
{\em The convex hull $\mH(Y)$ of $Y$} is the intersection of all convex subsets
of $|\sX|$ containing $Y$.
Notice that $[{\bf{x}},{\bf{y}}]=\mH (\{ {\bf{x}},{\bf{y}} \} )$.

Let $\sG$ be a system of subgroups of $G$.
We would like to have some control over the subgroups $\sG_x$, along geodesics.
Bearing this in mind, we propose the following definition:
\begin{defn}
\label{geodesic}  
  We say that a contravariant system of subgroups $\sG$ is \emph{geodesic}
  if for all ${\bf{x}},{\bf{y}}\in |\sX|$ 
  $$ \sG_z \sset \sG_x \sG_y$$
  where $z\in \sX_0$ is a vertex of the first simplex 
  $u\in \sX_n$ along the geodesic $[{\bf{x}},{\bf{y}}]$, i.e., 
$z=\sX(f_i^n)u$ for some $i$ and 
$(\D_n\times u) \cap [{\bf{x}},{\bf{y}}] = [{\bf{x}},{\bf{v}}]$
  for some ${\bf v} \in \, ]{\bf{x}},{\bf{y}}]$.
\end{defn}
The significance of this definition transpires in the following lemma,
inspired by similar results
of Meyer and Solleveld for 
Bruhat-Tits buildings of $p$-adic groups:  
\begin{lemma} (cf. \cite[Prop 2.2 and Lemma 2.6]{MeSo}) \label{prop2_id}
  Suppose that $|\sX|$ admits a CAT(0)-metric, $\sG_x$ is
  a geodesic exquisite system and the field $\F$ is $\sG_x$-ordinary
  for each $x\in \sX_0$. Then
  $$
  \L_x \star\L_z \star\L_y = \L_x \star\L_y
  \ \mbox{ and } \
  \L_x \star\L_z  = \L_z \star\L_x , 
  $$
as soon as $x,y,z \in \sX_\bullet$
satisfy the conditions spelled out in Definition~\ref{geodesic}.  
\end{lemma}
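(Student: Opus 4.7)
The plan is to establish the commutation $\L_x \star \L_z = \L_z \star \L_x$ first and then leverage it, together with the absorption $\sG_z\subseteq \sG_x\sG_y$, to obtain the three-term identity. Throughout I treat $x$ as the simplex containing ${\bf x}$; the condition ${\bf x}\in \D_n\times u$ in Definition~\ref{geodesic} forces $x$ to be a face of $u$, while $z$ is a vertex of $u$ by hypothesis.

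For the commutation, the key observation is that $x$ and $z$ are both faces of the common simplex $u$. In the exquisite system $\sG_x = \sG_{w_0}\sG_{w_1}\cdots\sG_{w_m}$ where $w_0,\dots,w_m$ are the vertices of $x$. Each $w_i$ is a vertex of $u$ and therefore adjacent to $z$ (they span an edge of $u$), so by condition~(2) in the construction of an exquisite system $\sG_{w_i}\sG_z = \sG_z\sG_{w_i}$ for every $i$. Iterating these pairwise commutations yields $\sG_x\sG_z = \sG_z\sG_x$ as subsets of $G$; consequently $H\coloneqq \sG_x\sG_z$ is a compact open subgroup. The calculation underlying equation~(\ref{comm_id3}) of Lemma~\ref{prop_id} uses only this commutation as sets and therefore produces $\L_x\star\L_z = \L_H = \L_z\star\L_x$.

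For the three-term identity, I would start from the general formula $\L_A\star\L_B = \mathbf{1}_{AB}/\mu(AB)$ valid for any pair of compact open subgroups $A,B$ of $G$. This follows from the same calculation as in Lemma~\ref{prop_id}: the integrand vanishes outside $AB$, and on $AB$ the value $\mu(A\cap \vg B)/\mu(A)\mu(B)$ is constant, equal to $1/\mu(AB)$ by the standard coset formula. Taking $A = H$ and $B = \sG_y$ and invoking associativity together with the commutation just established,
$$
\L_x\star\L_z\star\L_y \;=\; \L_H\star\L_y \;=\; \frac{\mathbf{1}_{H\sG_y}}{\mu(H\sG_y)} \;=\; \frac{\mathbf{1}_{\sG_x\sG_z\sG_y}}{\mu(\sG_x\sG_z\sG_y)}.
$$
The argument now reduces to the set identity $\sG_x\sG_z\sG_y = \sG_x\sG_y$. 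The inclusion $\sG_x\sG_y\subseteq \sG_x\sG_z\sG_y$ is immediate from $e\in\sG_z$, while the reverse is precisely where the geodesic hypothesis enters: since $\sG_z\subseteq \sG_x\sG_y$ and $\sG_x$, $\sG_y$ are subgroups,
$$
\sG_x\sG_z\sG_y \;\subseteq\; \sG_x(\sG_x\sG_y)\sG_y \;=\; (\sG_x\sG_x)(\sG_y\sG_y) \;=\; \sG_x\sG_y.
$$
Combined with the previous display this yields $\L_x\star\L_z\star\L_y = \mathbf{1}_{\sG_x\sG_y}/\mu(\sG_x\sG_y) = \L_x\star\L_y$.

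The main conceptual point is the division of labour between the two hypotheses: the simplicial structure of an exquisite system produces the local commutation, while the CAT(0)/geodesic condition $\sG_z\subseteq \sG_x\sG_y$ is exactly what drives the absorption $\sG_x\sG_z\sG_y = \sG_x\sG_y$. Once both are in place there is no serious computational obstacle; the identity collapses to a short set-theoretic manipulation.
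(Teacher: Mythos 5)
Your proof is correct and follows essentially the same route as the paper's (very terse) argument: commutation of $\L_x$ with $\L_z$ via the exquisite product over vertices of $u$, then absorption of $\sG_z$ using the geodesic inclusion $\sG_z\subseteq\sG_x\sG_y$. The paper states the second step as "easily follows"; your explicit derivation, via the general identity $\L_A\star\L_B=\mathbf{1}_{AB}/\mu(AB)$ for compact open subgroups (which, as you correctly observe, does not require $AB$ to be a group) together with the set-theoretic collapse $\sG_x\sG_z\sG_y=\sG_x\sG_y$, is exactly the intended filling of that gap.
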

\begin{proof}
If $z=\sX(f_i^n)u$ as in Definition~\ref{geodesic},
then $\L_x$ is a product of various $\L_{\sX(f_k^n)u}$,
hence, commutes with $\L_z$. The first equality easily follows
from the geodesic condition $ \sG_z \sset \sG_x \sG_y$.
\end{proof}



Now consider 
a character $\chi: A \to \widetilde{\F}^\x$. 
Given a subgroup $H\leq G$, set $H_{\chi}\coloneqq H/H\cap \ker(\chi)$. 
It is a subgroup of $G_\chi$. 
Observe that $H_\chi$ is compact if and only if $H$ is compact modulo $A$.
We are ready for
the main
conjecture of this section:

\begin{conj}
  \label{cat0thm}
Let $G$ be a locally compact totally disconnected group, $A$ its closed central subgroup. 
Suppose $G$ acts smoothly on a simplicial set $\sX_\bullet$ of
dimension $n$, 
with $A$ acting trivially. 
Further suppose that a face of a non-degenerate simplex in $\sX_\bullet$ is non-degenerate
and $|\sX|$ admits a CAT(0)-metric
such that the faces are geodesic, i.e., 
$\mH (\mathring{\D}_n\times x) = \mathring{\D}_n\times x$ 
for each $x\in\sX_{(\bullet)}$. 
If $V\in\sM_{A,\chi}(G)$, the following four statements should conjecturally hold:
\begin{enumerate}
\item If $\sG$ is a geodesic exquisite 
system of subgroups
of $G_\chi$ such that $\F$ is $\sG_x$-ordinary for all $x\in \sX_0$, then the complex
$$
0 \rightarrow C_{n}(\sX_\bullet , \VtI) \xrightarrow{d_n} C_{n-1} (\sX_\bullet , \VtI) 
\xrightarrow{d_{n-1}} \ldots \xrightarrow{d_1} C_0 (\sX_\bullet , \VtI) \xrightarrow{w} V
$$ 
is an exact sequence.
\item Each $C_{k}(\sX_\bullet , \VtI)$ is a  projective module  in $\M_{A, \chi}(G)$.   
\item If $(\pi, V)$ is generated by invariants $V^{\sG_x}$ for some $x\in \sX_0$, 
then the complex is a projective resolution of $V$ in $\M_{A, \chi}(G)$.
\item If $(\pi, V)$ is admissible and $\sX_{(k)}$ has finitely many $G$-orbits,
then $C_{k}(\sX_\bullet , \VtI)$ is a finitely generated $G$-module.
\end{enumerate}
\end{conj}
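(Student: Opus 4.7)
The plan is to dispatch the four claims in the order (4), (2), (3), (1), with exactness in (1) being the main technical obstacle. For (4), admissibility means each $V^{\sG_x}=\L_x\ast V$ is finite dimensional over $\widetilde{\F}$, and combined with finitely many $G$-orbits on $\sX_{(k)}$, Proposition~\ref{chain_properties}(5) yields finite generation of $C_k(\sX_\bullet,\VtI)$ at once. For (2), the decomposition in Proposition~\ref{chain_properties}(2) rewrites $C_k(\sX_\bullet,\VtI)$ as a direct sum of algebraically induced modules $a-\ind_{G_{x_k}}^{G}V^{\sG_{x_k}}$ over representatives $x_k$ of $G$-orbits in $\sX_{(k)}$. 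Since $\sG_{x_k}$ is compact open in $G_{x_k}$ and acts trivially on $V^{\sG_{x_k}}$, the desired projectivity should reduce to a variant of Corollary~\ref{projective} applied after descent to the discrete quotient $G_{x_k}/\sG_{x_k}$, with $H/A$-ordinariness inherited from $\sG_x$-ordinariness at vertices through the exquisite structure; this step will implicitly require that the stabilisers $G_{x_k}$ be compact modulo $A$, which must either be added as a hypothesis or verified in the intended applications.

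The crux is (1). Following the Schneider--Stuhler and Meyer--Solleveld strategies, the idea is to construct an explicit contracting chain homotopy exploiting the CAT(0) geometry. Fix a basepoint $\mathbf{y}=\mathring{x_0}$ for some vertex $x_0\in\sX_0$. For a non-degenerate simplex $x\in\sX_{(n)}$, the unique geodesic $[\mathring{x},\mathbf{y}]$ exits $x$ through a codimension-one face and enters a neighbouring $(n+1)$-simplex containing $x$ as a face together with a distinguished new vertex $z$; call this simplex $x\ast z$. One then defines $s_n\colon C_n(\sX_\bullet,\VtI)\to C_{n+1}(\sX_\bullet,\VtI)$ on generators $[x]\otimes\alpha$ with $\alpha\in V^{\sG_x}$ by $s_n([x]\otimes\alpha)=\pm[x\ast z]\otimes \L_{x\ast z}\alpha$. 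The identities of Lemmas~\ref{prop_id} and~\ref{prop2_id}, in particular $\L_x\star\L_z\star\L_y=\L_x\star\L_y$ along geodesic triples, should be exactly what is needed for the telescopic cancellation $ds+sd=\id$, while the geodesic condition $\sG_z\subseteq\sG_x\sG_y$ of Definition~\ref{geodesic} ensures that $\L_{x\ast z}\alpha$ does lie in $V^{\sG_{x\ast z}}$. Statement (3) is then immediate: under the hypothesis that $V$ is generated by $V^{\sG_x}$ for some vertex $x$, the augmentation $w$ is surjective, and (1)+(2) upgrade the complex to a projective resolution.

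The principal obstacle is exactness in (1). Unlike the $p$-adic setting of \cite{SS2}, we cannot invoke Bernstein's theorem that certain subcategories of $\M_A(G)$ are closed under subquotients, so the CAT(0) geometry alone must carry the weight uniformly across dimensions. Two delicate points converge here: the combinatorial existence of the next simplex $x\ast z$ along the geodesic, i.e., that the geodesic continuation genuinely lies in some simplex of $\sX_\bullet$ adjacent to $x$, and the verification that $ds+sd$ collapses to the identity in all dimensions rather than just in the tree case. This combinatorial difficulty is precisely why only Theorem~\ref{tree} is established below.
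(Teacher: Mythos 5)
Your reading of the overall structure is correct: the paper itself treats only statement~(1) as genuinely conjectural, with (2)--(4) dismissed as consequences of Proposition~\ref{chain_properties}, and the authors prove~(1) only in dimension one (Theorem~\ref{tree}). You have correctly reached the same conclusion, including the recognition that~(3) is formal once (1) and (2) are in hand and that~(4) is Proposition~\ref{chain_properties}(5) applied to the finite-dimensional invariant spaces $V^{\sG_x}$.

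Two remarks. First, your caveat on~(2) is sharper than what the paper says. Proposition~\ref{chain_properties}(6) requires that each stabiliser $G_x$ be compact modulo~$A$ and that $\F$ be $G_x/A$-ordinary, whereas the conjecture only asks for $\F$ to be $\sG_x$-ordinary at vertices. These hypotheses are not equivalent, and the paper's assertion that ``(2)--(4) are established in Proposition~\ref{chain_properties}'' silently presupposes the stronger conditions; your observation that compactness modulo~$A$ of the stabilisers must be added as a hypothesis or verified in the applications is a genuine and useful clarification. Second, for the tree case, the paper does not construct a contracting homotopy $s_n$ as you propose. Its proof of Theorem~\ref{tree} proceeds by induction on the size of the convex hull of the support: given a cycle, it locates an endpoint $\mathring{x}_1$ of the hull, applies the idempotent $\L_{x_1}\star(1-\L_{x_1'})$ to the identity $\sum_i\L_{x_i}(\a_i)=0$, and uses Lemmas~\ref{prop_id} and~\ref{prop2_id} to conclude $\a_1\in\im(\L_{x_1'})$, which lets it peel off an edge and shrink the hull. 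This is a support-reduction argument rather than an explicit $ds+sd=\mathrm{id}$ contraction. Your proposed homotopy is in the Meyer--Solleveld spirit and is a reasonable route toward the higher-dimensional case, but as you yourself note, the combinatorial existence of the ``next simplex along the geodesic'' and the telescopic cancellation across all degrees are precisely where the argument is not yet a proof, consistent with the paper leaving it as a conjecture.
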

In fact, statements (2)--(4) are established in  Proposition~\ref{chain_properties}.
Only statement (1) is truly a conjecture.
It is proved for
affine Bruhat-Tits buildings 
by Meyer and Solleveld \cite[Theorem 2.4]{MeSo}.  
We can prove its partial case:
\begin{thm}
  \label{tree}
  If the dimension of $|\sX |$ is one,
  then Conjecture~\ref{cat0thm} holds.
\end{thm}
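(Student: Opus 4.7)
Since $|\sX|$ is a connected 1-dimensional CAT(0) space, it is a simplicial tree. Statements (2), (3), (4) of Conjecture~\ref{cat0thm} are direct consequences of Proposition~\ref{chain_properties}, as already noted before the theorem, so my task reduces to establishing statement (1): exactness of
$$ 0 \rightarrow C_1(\sX_\bullet, \VtI) \xrightarrow{d_1} C_0(\sX_\bullet, \VtI) \xrightarrow{w} V. $$
For injectivity of $d_1$, I would use a standard leaf argument: if $\eta = \sum_e \beta_e[e] \in \ker d_1$ were nonzero, then $\mathrm{supp}(\eta)$ is a finite subgraph of a tree, hence contains a leaf edge $e$ with an endpoint $y$ belonging to no other edge in $\mathrm{supp}(\eta)$. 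The coefficient of $[y]$ in $d_1(\eta)$ is then $\pm \beta_e$, forcing $\beta_e = 0$, a contradiction.

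The substantive content is exactness at $C_0$. Given $\xi = \sum_y \alpha_y[y]$ with $\alpha_y \in V^{\sG_y}$ and $\sum_y \alpha_y = 0$, I would induct on the diameter $D$ of $\mathrm{supp}(\xi)$ inside the tree, the case $D=0$ being immediate. Let me illustrate the key three-vertex prototype with $\mathrm{supp}(\xi) = \{y_0, y_1, y_2\}$ along a path and edges $e_i = [y_i, y_{i+1}]$. Applying $\Lambda_{y_1}$ to $\sum \alpha_y = 0$ and using Lemma~\ref{prop2_id} gives
$$\alpha_{y_1} = -\Lambda_{e_0}\alpha_{y_0} - \Lambda_{e_1}\alpha_{y_2}.$$
Subtracting $d_1(-\Lambda_{e_0}\alpha_{y_0}\cdot e_0 + \Lambda_{e_1}\alpha_{y_2}\cdot e_1)$ from $\xi$ annihilates the middle coefficient and produces the residue
$$\xi' = (\alpha_{y_0} - \Lambda_{e_0}\alpha_{y_0})[y_0] + (\alpha_{y_2} - \Lambda_{e_1}\alpha_{y_2})[y_2].$$
Setting $\gamma = \Lambda_{e_0}\alpha_{y_0} - \alpha_{y_0}$, the relation above forces $\gamma = -(\alpha_{y_2} - \Lambda_{e_1}\alpha_{y_2})$, so $\gamma$ lies in $V^{\sG_{y_0}} \cap V^{\sG_{y_2}}$ by virtue of the two presentations. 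The geodesic condition $\sG_{y_1} \subseteq \sG_{y_0}\sG_{y_2}$ then gives $V^{\sG_{y_0}} \cap V^{\sG_{y_2}} \subseteq V^{\sG_{y_1}}$, so in fact $\gamma \in V^{\sG_{e_0}} \cap V^{\sG_{e_1}}$ and $\xi' = d_1(\gamma\cdot e_0 + \gamma\cdot e_1)$.

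For a general $\xi$, the plan is to iterate this idempotent-subtraction procedure along the longest geodesic in $\mathrm{supp}(\xi)$, and to handle each branch off that geodesic by an analogous local subtraction that pushes the branch's contribution to its foot on the main geodesic. The main obstacle will be the combinatorial bookkeeping for branched trees with long geodesics: at every stage I must verify that the chosen chain coefficients actually land in $V^{\sG_e}$ for the relevant edges. The essential mechanism throughout is the commutation of idempotents along a geodesic path furnished by Lemma~\ref{prop2_id}, combined with the inclusion $V^{\sG_{y_0}} \cap V^{\sG_{y_m}} \subseteq V^{\sG_{y_i}}$ at each intermediate vertex $y_i$ on a geodesic, which together ensure that the residue strictly reduces the diameter of $\mathrm{supp}(\xi)$ while remaining inside $C_\bullet(\sX_\bullet, \VtI)$, allowing the inductive step to close.
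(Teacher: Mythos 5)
Your reduction to statement (1) is correct, and your injectivity argument for $d_1$ at $C_1$ matches the paper's leaf-edge argument. For exactness at $C_0$, however, your proof is incomplete: you verify only a three-vertex prototype on an unbranched path, and then sketch an iteration ``along the longest geodesic'' for the general case, explicitly flagging the branched-tree bookkeeping as an unresolved obstacle. That gap is real, and moreover your chosen inductive measure (diameter of the support) does not obviously decrease when you peel off a single branch of a star-shaped support, so the induction as stated would not close even if the bookkeeping were sorted out.

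The paper's argument closes these gaps by working from a leaf of the convex hull $Y = \mH(\{\mathring{x}_1,\ldots,\mathring{x}_n\})$ rather than from a middle vertex of a geodesic. If $\mathring{x}_1$ is an endpoint of $Y$ with unique neighbour $x_1'$, then for \emph{every} $i\ge 2$ the geodesic from $x_1$ to $x_i$ passes through $x_1'$, so Lemma~\ref{prop2_id} gives $\L_{x_1}\star\L_{x_1'}\star\L_{x_i}=\L_{x_1}\star\L_{x_i}$ and hence $\L_{x_1}\star(1-\L_{x_1'})\star\L_{x_i}=0$ for all $i\ge2$ simultaneously. Applying $\L_{x_1}\star(1-\L_{x_1'})$ to the relation $\sum_i\L_{x_i}(\a_i)=0$ therefore kills the whole tail at once and yields $(1-\L_{x_1'})(\a_1)=0$, i.e.\ $\a_1\in V^{\sG_{x_1'}}$, so $\a_1\in V^{\sG_{x_1}}\cap V^{\sG_{x_1'}}=V^{\sG_{e_1}}$ and $\pm\a_1 e_1$ is a legitimate $1$-chain. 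Subtracting its boundary replaces $\a_1 x_1$ by $\a_1 x_1'$, which strictly shrinks the hull, and one inducts on the number of vertices in $Y$ — a measure that always strictly decreases, unlike the diameter. So the key idea you are missing is to localise at a leaf: then the geodesic-idempotent identity handles all the other terms uniformly, and no branch case analysis is needed. Your ``push from the middle'' step is not wrong locally, but it does not scale without substantial extra work that the leaf-based argument renders unnecessary.
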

\begin{proof}
{\bf (1), exactness at $C_{0}(\sX_\bullet , \VtI)$:}  
The inclusion $\im(d_1) \sset \ker(w)$ is clear. 
Let us show that $\ker(w) \sset \im(d_1)$. 
Pick a 0-cycle
$\alpha = \sum\limits_{i=1}^n \a_i x_i \in C_0(\sX_\bullet , \VtI)$
where all $\alpha_i\neq 0$.  
Consider the hull of its support $Y\coloneqq  \mH (\{\mathring{x}_1,\ldots,\mathring{x}_n\})$.
Under our conditions $|\sX|$ is a tree, so $Y$ is a finite tree.
Hence, $Y$ has an endpoint. Without loss of generality, $\mathring{x}_1$ is an endpoint.
Let $x'_1\in\sX_0$ be the unique vertex adjacent to $x_1$
such that $\mathring{x}^\prime_1\in Y$. 
Let $e_1\in\sX_1$ be the edge connecting $x_1$ and $x'_1$.
Since $w(\alpha) = \sum \limits_{i=1}^n \a_i=0$ and $\L_{x_i} (\alpha_i)=\alpha_i$,
we conclude that
\begin{equation}
  \label{idem_eq}
  \sum \limits_{i=1}^n \L_{x_i} (\a_i)=0 .
  \end{equation}
Applying $\L_{x_1}\star (1-\L_{x'_1})$ to Equation~(\ref{idem_eq}), 
we can rewrite each summand separately, using Lemmas~\ref{prop_id} and \ref{prop2_id}: 
 \begin{itemize}
 \item $\L_{x_1}\star (1-\L_{x'_1}) \star \L_{x_1}(\a_1)=(1-\L_{x'_1})(\a_1)$, 
 \item $\L_{x_i}\star (1-\L_{x'_1}) \star \L_{x_i}(\a_i)=0$ for $i\geq 2$.
 \end{itemize}
 Thus, $\a_1 \in \ker(1-\L_{x'_1})$ and  $\a_1 \in \im (\L_{x'_1})$.
Then
$$
\alpha^\prime \coloneqq
\a_1 x_1^\prime + \sum\limits_{i=2}^n \a_i x_i =
d_1(\pm \alpha e_1) + \alpha \in C_0(\sX_\bullet , \VtI)  
$$
and the hull of the support of $\a^\prime$ is a proper subset of $Y$.
An easy induction on the size of the hull of the support completes the proof.

{\bf (1), exactness at $C_{1}(\sX_\bullet , \VtI)$:}  
Pick a 1-cycle
$\alpha = \sum\limits_{i=1}^n \a_i x_i \in C_1(\sX_\bullet , \VtI)$
where all $\alpha_i\neq 0$.  
Consider the hull of its support $Y\coloneqq  \mH (\{\mathring{x}_1,\ldots,\mathring{x}_n\})$.
Again $Y$ is a finite tree, so $Y$ has an endpoint, e.g., $\mathring{x}_1$.
Let $z\in\sX_0$ be the unique vertex of the edge $x_1$
such that $\mathring{z}\not\in Y$.
Clearly, $d_1 (\alpha) = \pm \alpha_1 z + \ldots$
has a non-zero coefficient in front of $z$. This proves that $\alpha=0$ and $d_1$ is injective.
\end{proof}

\section{Davis Building for a Group with Generalised BN-pair} \label{real}


Let $G$ be an abstract group.
Following Iwahori \cite{Iw},
\emph{a generalised BN-pair} on a group $G$ is a triple $(B,N,S)$ 
satisfying  the following conditions:
\begin{itemize}
\item[(i)] $B$ and $N$ are subgroups of $G$. $H = B \cap N$ is a normal subgroup of $N$.
\item[(ii)] $N/H = \Omega \ltimes W$ where $\Omega$ is a subgroup and $W$ is a normal subgroup.
\item[(iii)] $W$ is generated by the set $S$.  
  The elements of $S$ have the following properties:
\begin{itemize}
\item[(iii.1)]  For any $\vt$ in $\Om \ltimes W$ and any $\vs \in S$ we have
  $\dot{\vt} B \dot{\vs} \subset B \dot{\vt} \dot{\vs} B \cup B \dot{\vt} B$
  where $\dot{\vt}$ and $\dot{\vs}$ are elements of $G$ lifting $\vt$ and $\vs$.
\item[(iii.2)] $\vs^2=1$ and $\dot{\vs} B \dot{\vs}^{-1} \neq B$ for all $\vs \in S$.
\end{itemize}
\item[(iv)] $\va S \va^{-1} = S$ for all $\va \in \Om$.
\item[(v)] $\dot{\va} B \dot{\va}^{-1} = B$ for all $\va \in \Om$ and $B \dot{\va} \neq B$
  for any  $\va \in \Om \setminus \{ 1 \}$.
\item[(vi)] $G$ is generated by $B$ and $N$.
\end{itemize}

As usual $W$ is called the Weyl group of $G$.
Note that $W$ is a Coxeter group and thus $(W,S)$ is a Coxeter system.
We call $\Om \ltimes W$ \emph{the generalised Weyl group}. 
It is rather ironic that a BN-pair is a triple but it is a moot point
whether $B$ and $N$ uniquely determine $S$ for generalised BN-pairs.
Thus, we include $S$ into the definition for safety.


Given a group $G$ with a generalised BN-pair,
we can find a smaller group $G_0$
inside $G$ which has a BN-pair.
More precisely, define $G_0 \coloneqq  BWB$. 
Then the following statements
hold \cite{Iw}:
\begin{lemma}
\label{BN_pro}
\begin{enumerate}
\item  $G_0$ is a normal subgroup of $G$ and $G/G_0 \cong \Omega$.
\item $(B, N_0)$ is a BN-pair for $G_0$, where $N_0 = N \cap G_0$.
The Weyl groups of $G_0$ and $G$ are the same.
\item The automorphism of $G_0$ defined by conjugation by an element $\vg \in G$
preserves the BN-pair up to conjugacy in $G_0$, i.e., 
there exists   $\vg_0 \in G_0$ such that $\vg B \vg^{-1}=\vg_0 B \vg_0^{-1}$ 
and $\vg N_0 \vg^{-1}=\vg_0 N_0 \vg_0^{-1}$.
\end{enumerate}
\end{lemma}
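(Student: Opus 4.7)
The plan is to establish the three statements in sequence, with part (1) carrying most of the technical weight. For statement (1), the first task is to prove that $G_0 = BWB$ is a subgroup. Closure under multiplication reduces to the standard Bruhat-style argument: axiom (iii.1) restricted to $\vt \in W$ supplies $B\dot{\vw}\dot{\vs}B \cdot B\dot{\vw}'B \subseteq B\dot{\vw}\dot{\vs}\dot{\vw}'B \cup B\dot{\vw}\dot{\vw}'B$, and a length induction on $(W,S)$ yields $B\vw B \cdot B\vw' B \subseteq BWB$. Closure under inverses follows from $H \subseteq B$. For normality we invoke axiom (vi): $G = \langle B, N \rangle$, so it suffices to check conjugation by $B$ (trivial) and by $N$. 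For $n \in N$, write $n = \dot{\omega}\dot{\vw}h$ with $\omega \in \Omega$, $\vw \in W$, $h \in H \subseteq B$; axiom (v) gives $\dot{\omega}B\dot{\omega}^{-1} = B$, while axiom (iv) implies that for every $\vw \in W$ there is $\vw' \in W$ with $\dot{\omega}\dot{\vw}\dot{\omega}^{-1} \in \dot{\vw'}H \subseteq \dot{\vw'}B$. Hence $\dot{\omega}G_0\dot{\omega}^{-1} = G_0$, and conjugation by $\dot{\vw}$ and $h$ preserves $G_0$ trivially.

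The identification $G/G_0 \cong \Omega$ then follows from the generalised Bruhat decomposition $G = \bigsqcup_{\omega\vw \in \Omega \ltimes W} B\dot{\omega}\dot{\vw}B$, derived from axiom (iii.1) exactly as in the classical case. Axiom (v) rewrites each double coset as $\dot{\omega}B\dot{\vw}B \subseteq \dot{\omega}G_0$, so the cosets of $G_0$ in $G$ are exactly $\{\dot{\omega}G_0 : \omega \in \Omega\}$, and the induced group operation respects the $\Omega$-structure.

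For statement (2), it suffices to verify the BN-pair axioms for $(B, N_0)$ on $G_0$. Generation is immediate from $G_0 = BWB$, since each lift $\dot{\vw}$ can be chosen in $N \cap G_0 = N_0$. The intersection $B \cap N_0 = B \cap N = H$ because $H \subseteq B \subseteq G_0$. The equality $N_0/H = W$ follows from uniqueness of the Bruhat decomposition: $n \in N$ mapping to $\omega\vw \in \Omega \ltimes W$ lies in $B\dot{\omega}\dot{\vw}B$, and $n \in G_0$ forces $\omega = 1$. Axioms (iii.1) and (iii.2) involve only $W$ and $S$ and hence restrict verbatim from $G$ to $G_0$.

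Finally, for (3), the coset decomposition from (1) writes $\vg = \dot{\omega}\vg_0$ with $\omega \in \Omega$, $\vg_0 \in G_0$; setting $\vg_0' \coloneqq \dot{\omega}\vg_0\dot{\omega}^{-1}$, normality of $G_0$ yields $\vg_0' \in G_0$, and axiom (v) produces $\vg B\vg^{-1} = \vg_0'(\dot{\omega}B\dot{\omega}^{-1})(\vg_0')^{-1} = \vg_0' B (\vg_0')^{-1}$. The analogous equation for $N_0$ uses that $\dot{\omega}$ normalizes $N$ (since $\Omega$ is a subgroup of $N/H$) and $G_0$ (by (1)), hence normalizes $N_0 = N \cap G_0$. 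The main obstacle is executing the generalised Bruhat decomposition carefully, since axioms (iii.1) and (iv) are stated with lifts modulo the $H$-ambiguity; once that decomposition is in hand, all subsequent steps are formal manipulations.
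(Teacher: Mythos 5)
The paper does not actually prove Lemma~\ref{BN_pro}; it delegates to Iwahori's paper \cite{Iw} with a bare citation, so there is no in-paper argument to compare against. Your proposal supplies the argument Iwahori gave, and it is essentially correct. The organizing idea --- establish the generalised Bruhat decomposition $G = \bigsqcup_{\omega\vw\in\Omega\ltimes W} B\dot{\omega}\dot{\vw}B$ from axiom (iii.1), then use axioms (iv) and (v) to peel off the $\Omega$-layer as $\dot{\omega}G_0$ --- is exactly what one finds in \cite{Iw}. A few points you gloss as ``exactly as in the classical case'' do deserve a word of care: the disjointness of the double cosets must be established first for $\vw\in W$ (the Tits/Bourbaki argument from (iii.1)--(iii.2)) and then extended to $\Omega\ltimes W$ using the $\dot{\omega}B=B\dot{\omega}$ relation from (v) together with $B\dot{\omega}\neq B$ for $\omega\neq 1$, which is precisely what guarantees the cosets $\dot{\omega}G_0$ are pairwise distinct. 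Your reduction of the $N_0/H\cong W$ identification to disjointness, and your part (3) computation with $\vg_0'\coloneqq\dot{\omega}\vg_0\dot{\omega}^{-1}$ using that $\dot{\omega}$ normalizes both $B$ and $N$ (hence $N_0=N\cap G_0$ by part (1)), are clean and correct. In short: the proposal is a faithful and accurate reconstruction of the argument the paper outsources.
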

A group with a BN-pair is an obvious example of a group with generalised BN-pair.
For a subtler example, consider 
a group $G$ with a BN-pair
$(B,N)$ and another  group $\Omega$.
The group $\Omega\times G$ admits a BN-pair $(\Omega\times B,\Omega\times N)$
and a generalised BN-pair $(B,\Omega\times N)$. The Weyl groups are the same in both cases
but the generalised Weyl group is bigger: $N/H=\Omega\times W$ for the latter pair.

For an example pertinent for our investigation \cite{Iw},
consider $G=\GL$ over a non-Archimedean local field $\K$, its Iwahori
subgroup $I$ and its subgroup of monomial matrices $N$.
The pair $(I,N)$ is a generalised BN-pair:
$I\cap N = \Dn (\O_\K^\times, \ldots, \O_\K^\times ) \cong (\O_\K^\times)^n$
consists of diagonal matrices with coefficients
in the ring of integers $\O_\K\leq \K$.
Denote $T= \Dn (\K^\times, \ldots, \K^\times) \cong (\K^\times)^n$.
Then $N/(I\cap N) \cong N/T \ltimes T/H \cong \Sn \ltimes \Z^n$.
It contains the Weyl group
$W=\Sn \ltimes \Z_0^n$ of type $\widetilde{A}_{n-1}$
as a normal subgroup 
(where $\Z_0^n = \{ (x_i) \mid \sum_i x_i =0\}$)
with a complementary group 
$\Omega = \langle (1,0,\ldots, 0)\cdot \gamma \rangle$ 
where $\gamma = (1,2,\ldots, n)\in S_n$.

Another generalised BN-pair on $G=\GL$ is $(B,N)$ where $N$ is as above
and $B=Z(G)I$.
Indeed, $H = Z(G) \Dn (\O_\K^\times, \ldots, \O_\K^\times ) \cong \K^\times (\O_\K^\times)^n$
and
$N/H \cong \Sn \ltimes \Z_1^n$
where
$\Z_1^n = \Z^n/\langle (1,1,\ldots, 1) \rangle$.
The generalised Weyl group contains
the Weyl group $W=\Sn \ltimes \Z_0^n$ of type $\widetilde{A}_{n-1}$
as a normal subgroup of index $n$.
A complementary group can be chosen again as 
$\Omega = \langle (1,0,\ldots, 0) \cdot \gamma \rangle \cong \Cn$.
Finally,
$G_0 = BWB$ consists of those matrices whose determinant is
in $\langle\pi^n\rangle\O_\K^\times$ where $\pi\in \O_\K$
is a uniformizer.

Back to any group $G$ with a generalised BN-pair,
Lemma~\ref{BN_pro} guarantees not only
the existence of a building of $G_0$ of type $(W,S)$, say $\sB\sT$,
but also that $\sB\sT$ admits a well-defined simplicial $G$-action.
The fundamental apartment 
of $\sB\sT$ is the Coxeter complex associated to the Coxeter system $(W,S)$.
Hence, there exists a labelling which identifies each vertex of the fundamental chamber $C$ 
with an element of $S$.
We know that both $G$ and $G_0$ act on $\sB\sT$.
Let $G_1$ be the subgroup of $G$ that consists of all label-preserving elements. The following lemma  
summarises its properties:
\begin{lemma} The following statements hold in the notations above. 
\label{finite}
\begin{enumerate}
\item $G_1$ is a normal subgroup of $G$ containing $G_0$.
\item If $K$ is the kernel of the $G$-action on $\sB\sT$, then $G_1=KG_0$.
\item $(KB, N_1)$ is a BN-pair for $G_1$, where $N_1 = N \cap G_1$.
\item The buildings and the Weyl groups of $G_0$ and $G_1$ are the same.
\item $(KB,N)$ is a generalised BN-pair for $G$ with the same Weyl group $(W,S)$.
\item If $S$ is finite and the generalised Weyl group for the pair $(KB,N)$ is
$\Omega_1 \ltimes W$, then the constituent group $\Omega_1$ is finite.
\end{enumerate}
\end{lemma}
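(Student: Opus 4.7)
The plan is to prove the six assertions sequentially since each leans on its predecessors. The organising tool throughout is the \emph{type homomorphism} $\tau \colon G \to \mathrm{Sym}(S)$ sending $\vg$ to the permutation of labels induced by its action on $\sB\sT$; this is well-defined because any automorphism of a labelled chamber system permutes labels consistently across apartments. Statement (1) is then immediate: $G_1 = \ker \tau$ is a normal subgroup as the kernel of a homomorphism, and $G_0$ is contained because the standard action of a group with a BN-pair on its own building is label-preserving.

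For (2), the inclusion $KG_0 \subseteq G_1$ is clear since both $K$ and $G_0$ preserve labels. To establish the reverse inclusion I would first identify $G_1/G_0$ inside $G/G_0 \cong \Om$: axiom (iv) equips $\Om$ with a conjugation action on $S$, and under condition (v) the label-permutation induced by any lift $\dot{\va}$ of $\va \in \Om$ is precisely $\vs \mapsto \va\vs\va^{-1}$. Hence $G_1/G_0 \cong \Om_K$, where $\Om_K = \{\va \in \Om : \va\vs\va^{-1} = \vs \text{ for all }\vs \in S\}$, which equivalently is the subgroup of $\Om$ centralising $W$. The crucial and \textbf{most delicate step} is to show that each $\va \in \Om_K$ admits a lift in $KG_0$, i.e., that $\dot{\va}$ acts on $\sB\sT$ as some element of $G_0$. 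Since $\va$ centralises $W$, conjugation by $\dot{\va}$ on $N_0$ is trivial modulo $H$, so $\dot{\va}$ fixes every chamber of the fundamental apartment. Combining this with axiom (v), $\dot{\va} B \dot{\va}^{-1} = B$, and Lemma~\ref{BN_pro}(3), I would then show that the action of $\dot{\va}$ on an arbitrary chamber, expressed via Bruhat decomposition, differs from a $G_0$-translate by an element that acts trivially on $\sB\sT$, which is what lands $\dot{\va}$ in $KG_0$.

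Given (2), statements (3) and (4) are largely mechanical. The intersection $KB \cap N_1$ equals $(K \cap N_1)H$: one inclusion is obvious, and the reverse uses normality of $K$ together with $N_1 = N \cap KG_0$. The Bruhat decomposition $G_1 = (KB) N_1 (KB)$ follows by multiplying the decomposition $G_0 = B N_0 B$ by $K$, and the Weyl group $N_1/(KB \cap N_1)$ is identified with $N_0/H = W$ via the projection. The chamber system attached to $(KB, N_1)$ coincides with that of $(B, N_0)$ because $K$ acts trivially, so the building and Weyl group of $G_1$ are $\sB\sT$ and $W$.

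Statement (5) reduces to verifying that enlarging $B$ to $KB$ is compatible with axioms (i)--(vi): $KB \cap N$ is still normal in $N$, the Bruhat-type relation in (iii.1) survives because $K$ is central to the building action, (iv) is unchanged, and (v)--(vi) transfer directly using the normality of $K$ in $G$. Finally, for (6), the constituent group $\Om_1$ equals $(N/(N \cap KB))/W$, and by part (2) the composite $\Om \hookrightarrow N/H \twoheadrightarrow N/(N \cap KB) \to \Om_1$ has kernel exactly $\Om_K$. Hence $\Om_1 \cong \Om/\Om_K$, and the type homomorphism $\tau$ descends to an injection $\Om_1 \hookrightarrow \mathrm{Sym}(S)$. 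Since $S$ is assumed finite, $\mathrm{Sym}(S)$ is finite and therefore so is $\Om_1$.
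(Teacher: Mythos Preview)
Your proof is correct and for items (1), (3)--(6) closely parallels the paper's argument: the type homomorphism $\tau$ for (1), routine BN-pair verification for (3)--(5), and the embedding of $\Omega_1 \cong G/G_1$ into $\mathrm{Sym}(S)$ for (6).

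The genuine difference is in (2). The paper bypasses your detour through $\Omega_K$ entirely: for an arbitrary $\vg \in G_1$ it invokes Lemma~\ref{BN_pro}(3) directly to produce $\vg_0 \in G_0$ with $\vg B\vg^{-1} = \vg_0 B\vg_0^{-1}$ and $\vg N_0\vg^{-1} = \vg_0 N_0\vg_0^{-1}$, then asserts that $\vg$ and $\vg_0$ act identically on the set of chambers; since both are label-preserving, they act identically on all of $\sB\sT$, whence $\vg \in K\vg_0 \subseteq KG_0$. Your route---first identifying $G_1/G_0$ with the centraliser $\Omega_K$ of $S$ in $\Omega$, then lifting each $\va \in \Omega_K$ individually and analysing its action via Bruhat decomposition---ultimately recovers the same $\vg_0$ (the $G_0$-component of $\vg$) but with considerably more bookkeeping. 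What you flag as the ``most delicate step'' is precisely the one-sentence assertion in the paper's proof; neither argument spells it out in full detail, so your approach is not more rigorous here, merely longer. The advantage of the paper's formulation is that it makes transparent that Lemma~\ref{BN_pro}(3) is doing all the work, whereas your approach partially obscures this by rederiving its content through the $\Omega$-decomposition.
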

\begin{proof}
  (1) is obvious.

  To prove (2) pick $\vg_0\in G_0$ for any $\vg\in G_1$ as in Lemma~\ref{BN_pro}. These elements $\vg$ and $\vg_0$
  act in the same way on the set of chambers in $\sB\sT$. Since both preserve the labelling, they act on $\sB\sT$ in the same way
  and $\vg\in K\vg_0 \subseteq KG_0$.

  Once we know (2),   (3) and (4) follow from the label-preserving action of $G_1$ on $\sB\sT$,
  while (5) is a straightforward check of axioms. 

To prove (6), consider  $\vg, \vh \in G$ changing the labelling in the same way.
Then the element $\vg \vh^{-1}$  does not change the labelling and hence $\vg \vh^{-1} \in G_1$.
In other words, we have an injective map:
$$ \Omega_1 \cong G/G_1 \longrightarrow \Sn,$$
  where $n=\rank(\sB\sT)=|S|$.
\end{proof}


We will use the following adjectives for subgroups of $(W,S)$ and $G$:
\begin{itemize}
\item  A subgroup $W_J$ of $W$ is \emph{standard parabolic} if it is generated by 
some $J \subset S$. 
\item If a standard parabolic subgroup $W_J$ is finite, it is called \emph{spherical}. Ditto for the set $J$.
\item A subgroup $P_J$ of $G_0$ is called \emph{standard parabolic}
  if it is of the form $BW_JB$.
\item A subgroup of $G_0$ is called \emph{parabolic of type $J$} if it is conjugate to the standard parabolic subgroup $BW_JB$. It is called \emph{parabolic of finite type} if $W_J$ is spherical.
\end{itemize}

Denote by $\S$ the set of all spherical subsets of $S$ and consider the following set:
$$\sP \coloneqq \coprod_{J \in \S} G_0 / P_J.$$
This is a partially ordered set with respect to inclusion. Observe that
$\vg_0 P_{J_0} \leq \vg_1 P_{J_1}$ if $J_0 \sset J_1$ (hence $P_{J_0} \sset P_{J_1}$), and $\vg_0^{-1} \vg_1 \in P_{J_1}$.
Denote by $\sD_n$ the set of all chains of $\P$ of length $n+1$, $\sD_{(n)}\subseteq \sD_n$ the subset of proper chains:
$$
\sD_n = \{ \vg_0 P_{J_0} \subseteq \vg_1 P_{J_1} \subseteq \ldots \subseteq \vg_n P_{J_n} \}, \ \
\sD_{(n)} = \{ \vg_0 P_{J_0} \subset \vg_1 P_{J_1} \subset \ldots \subset \vg_n P_{J_n} \}.
$$
Then $\sD_\bullet = (\sD_n)$ is a simplicial set, whose geometric realisation $|\sD|$
is the geometric realisation of the poset $\sP$.
We call $\sD_\bullet$ \emph{the Davis building} of $G$.
The action of $G$ on the Bruhat-Tits building $\sB\sT$ induces
a simplicial action of $G$ on the Davis building $\sD_\bullet$.
\begin{lemma}
  \label{D_stab}
  Let $x = [\vg_0 P_{J_0} \subseteq \ldots \subseteq \vg_n P_{J_n}]\in \sD_n$. The stabiliser $G_x$
  is equal to
  $\vg_0 B \Omega_x W_{J_0} B \vg_0^{-1}$
where $\Omega_x = \bigcap_{i=0}^{n} \Om_{J_i}$ and $\Om_{J}$ is the stabiliser of $J$.
\end{lemma}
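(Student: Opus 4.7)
The plan is to first reduce to the \emph{standard} chain, then compute its stabiliser by describing the $G$-action on $\sP$ via the factorisation $G = G_0\cdot N$.

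Because the chain $x$ is increasing, each $\vg_0 \in \vg_i P_{J_i}$, so $\vg_i P_{J_i} = \vg_0 P_{J_i}$ for all $i$. Thus $x = \vg_0 \cdot x_{\mathrm{std}}$ with $x_{\mathrm{std}} \coloneqq [P_{J_0}\subseteq\cdots\subseteq P_{J_n}]$, and by equivariance $G_x = \vg_0\, G_{x_{\mathrm{std}}}\, \vg_0^{-1}$. It therefore suffices to show $G_{x_{\mathrm{std}}} = B\,\Omega_x W_{J_0}\, B$.

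Next I identify the $G$-action on $\sP = \coprod_{J\in\S} G_0/P_J$. By Lemma~\ref{BN_pro}(1) together with $\Omega\leq N/H$, every $\vg\in G$ factors as $\vg = \vh\dot{\va}$ with $\vh\in G_0$ and $\dot{\va}\in N$ a lift of some $\va\in\Omega$. For such a lift, axioms (iv) and (v) combined with $H$ being normal in $N$ yield the \emph{equality} $\dot{\va} P_J\dot{\va}^{-1} = P_{\va(J)}$ of subgroups of $G_0$. Identifying each simplex $\vk P_J$ of $\sD_\bullet$ with its stabiliser $\vk P_J\vk^{-1}$ in $G_0$ (self-normalising) and conjugating, the $G$-action takes the form
\[
\vg\cdot\vk P_J \;=\; \vh\,(\dot{\va}\vk\dot{\va}^{-1})\, P_{\va(J)}, \qquad \vk\in G_0,\ J\in\S,
\]
independent of the choice of $N$-lift because two such lifts differ by an element of $H\subseteq B\subseteq P_{\va(J)}$.

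Setting $\vk=1$ gives $\vg\cdot P_{J_i} = \vh P_{\va(J_i)}$. Since cosets of different types are distinct elements of $\sP$, the condition $\vg\in G_{x_{\mathrm{std}}}$ forces $\va(J_i)=J_i$ for all $i$, i.e.\ $\va\in\bigcap_i\Omega_{J_i}=\Omega_x$, and then $\vh\in\bigcap_i P_{J_i}=P_{J_0}$ (the intersection collapses because $J_0\subseteq\cdots\subseteq J_n$ gives $P_{J_0}\subseteq\cdots\subseteq P_{J_n}$). The reverse inclusion is immediate, so $G_{x_{\mathrm{std}}} = P_{J_0}\cdot\tilde\Omega_x = BW_{J_0}B\cdot\tilde\Omega_x$, with $\tilde\Omega_x$ the set of $N$-lifts of $\Omega_x$. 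Since $\Omega$ normalises $B$ (axiom (v)) and each $\va\in\Omega_x$ fixes $J_0$ as a set (hence $\dot{\va}$ normalises the preimage of $W_{J_0}$ in $N$), the factor $\tilde\Omega_x$ commutes past both $B$ and $W_{J_0}$, giving $G_{x_{\mathrm{std}}} = B\,\Omega_x W_{J_0}\, B$; conjugating by $\vg_0$ then yields the stated formula for $G_x$.

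The main obstacle is the second paragraph above: extending the na\"ive $G_0$-action by left multiplication on $\sP$ to a $G$-action. One cannot write $\vg\cdot\vk P_J = \vg\vk P_J$ when $\vg\notin G_0$; the fix is to work with stabilisers in $G_0$ and to choose lifts of $\Omega$ inside $N$, so that $\dot{\va} P_J\dot{\va}^{-1} = P_{\va(J)}$ holds as an \emph{equality} rather than merely up to $G_0$-conjugacy. After this, the rest is bookkeeping with the chain structure.
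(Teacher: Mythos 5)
Your proof is correct, and it takes a genuinely different route from the paper's. The paper computes $(G_0)_x = \vg_0 P_{J_0}\vg_0^{-1}$ (essentially as you do, via the recursive coset relations, which is equivalent to your reduction to the standard chain), and then leaps to $G_x$ by invoking Iwahori's classification: every subgroup of $G$ containing $B$ is uniquely of the form $B\Omega'W_J B$. Knowing $\vg_0^{-1}G_x\vg_0 \supseteq B$ and its intersection with $G_0$, the form $B\Omega'W_{J_0}B$ is forced, and then $\Omega'$ is pinned down by checking which $\vu\in\Omega$ fix all $P_{J_i}$. You instead bypass the classification entirely by factoring $G = G_0\cdot N$ and writing out the $G$-action on $\sP$ via conjugation of the (self-normalising) parabolic stabilisers, so that $\dot{\va}P_J\dot{\va}^{-1} = P_{\va(J)}$ lets you read off the stabiliser directly from the type- and coset-constraints. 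Your approach is more elementary and self-contained — it avoids the citation to \cite{Iw} — at the cost of having to set up the $G$-action explicitly; the paper's is slicker because the classification does that work. One small remark: your stated justification for independence of the $N$-lift (``two lifts differ by an element of $H\subseteq B\subseteq P_{\va(J)}$'') is a bit terse — the clean way to see it is that the conjugation formula $\vg\cdot(\vk P_J\vk^{-1}) = \vg\vk P_J\vk^{-1}\vg^{-1}$ is manifestly lift-independent, and your coset expression is just one presentation of it; alternatively a direct computation works but needs $\dot{\va}^{-1}P_{\va(J)}=P_J\dot{\va}^{-1}$ again, not merely $H\subseteq P_{\va(J)}$.
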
 
\begin{proof}
  By the  definition of the partial order, for every $i \leq n$, there exists an element
  $\vp_i \in P_{J_i}$ with $\vg_{i-1}^{-1} \vg_i = \vp_i$.
Recursively we can write $\vg_i=\vg_0 \vp_1 \ldots \vp_i$. Hence
$$
(G_0)_{\vg_i P_{J_i}}
=\vg_i P_{J_i} \vg_i^{-1}=
\vg_0 \vp_1 \ldots \vp_i P_{J_i} \vp_i^{-1} \ldots \vp_1^{-1} \vg_0^{-1} = \vg_0 P_{J_i} \vg_0 ^{-1},
$$
since $P_{J_k} \sset P_{J_i}$ for all $k \leq i$. 
This allows us to
compute the stabiliser in $G_0$:
$$
(G_0)_x =
\bigcap_{i=0}^{n} (G_0)_{P_{J_i}}
=
\bigcap_{i=0}^{n} \vg_0 P_{J_i} \vg_0^{-1}
=\vg_0 P_{J_0} \vg_0^{-1}.
$$
Now, we move on to $G_x$. For every subgroup $P$ of $G$ containing $B$,
there exists a unique subset $J \sset S$ and a unique subgroup $\Om^\prime$ of $\Om$, 
such that $P=B \Om^\prime W_J B$ \cite{Iw}.
The subgroup $\vg_0^{-1}G_x \vg_0$ contains $B$, hence, it is one of these subgroups.
Moreover, as we know its intersection with $G_0$, we can conclude
that
$$
\vg_0^{-1}G_x \vg_0 = G_{\vg_0^{-1}\cdot x} =
B \Omega^\prime W_{J_0} B =
\bigcup_{\vu \in \Om'}  B \dot{\vu} W_{J_0} B
$$
for some subgroup $\Omega^\prime \leq \Omega$.
Clearly, $\vu \in \Omega^\prime$ if and only if
its lifting $\dot{\vu}$ stabilises all cosets in $\vg_0^{-1}\cdot x$, i.e., all $P_{J_i}$.
Thus, $\Omega^\prime = \bigcap_{i=0}^{n} \Om_{J_i}$. 
\end{proof}

We say that a topological group $G$ is {\em a topological group of Kac-Moody type}
if a generalised BN-pair
$(B,N,S)$ 
is selected such that
the following properties hold: 
\begin{itemize}
\label{p1}\item[(1)]  $G$ is a locally compact totally disconnected topological group. 
\label{p2}\item[(2)]  The set $S$ is finite.
\label{p3}\item[(3)]  The subgroup $B$ is open in $G$.
\label{p4} \item[(4)] The subgroup $B$ contains the kernel $K$ of the $G$-action on
the Bruhat-Tits building.
\label{p5} \item[(5)] If $J\subseteq S$ is a spherical subset, then
$P_J/K$ is compact.
\end{itemize}
Now we are ready for the main result of this section.

\begin{thm} \label{open}
A topological group $G$ of Kac-Moody type acts continuously on its Davis building $\sD_\bullet$.
Moreover, the stabiliser of each $x\in \sD_n$ is compact modulo the action kernel $K$.  
\end{thm}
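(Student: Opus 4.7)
The plan is to rely on Lemma~\ref{D_stab}, which describes the stabiliser of $x = [\vg_0 P_{J_0} \subseteq \ldots \subseteq \vg_n P_{J_n}] \in \sD_n$ as
$$
G_x = \vg_0 \, B\, \Omega_x \, W_{J_0}\, B \, \vg_0^{-1}, \qquad \Omega_x = \bigcap_{i=0}^n \Omega_{J_i},
$$
and to reduce both assertions to standard features of $B$, $P_{J_0} = BW_{J_0}B$, and $\Omega$.

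First I would verify that $G_x$ is open in $G$, which is precisely the continuity statement (each $\sD_n$ being a discrete $G$-set). Axiom (v) of a generalised BN-pair gives $\dot{\vu} B = B \dot{\vu}$ for each $\vu \in \Omega$, so that
$$
B\,\Omega_x\, W_{J_0}\, B \;=\; \bigcup_{\vu \in \Omega_x} \dot{\vu} \, P_{J_0}.
$$
By property (3) the subgroup $B$ is open in $G$, so $P_{J_0} \supseteq B$ is a union of left $B$-cosets and hence open; the finite union of its $\dot{\vu}$-translates is therefore open, and conjugation by $\vg_0$ preserves openness.

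Next I would prove that $G_x / K$ is compact. The preparatory step is the finiteness of $\Omega$: property (4) forces $K \subseteq B$, whence $KB = B$, so that the auxiliary BN-pair $(KB,N)$ in Lemma~\ref{finite} coincides with the given $(B,N)$, and part (6) of that lemma---combined with the finiteness of $S$ from property (2)---embeds $\Omega$ into $\Sy(S)$ and yields $|\Omega| < \infty$. In particular $\Omega_x$ is finite. Because $J_0 \in \S$ is spherical, property (5) gives that $P_{J_0}/K$ is compact (the quotient makes sense since $K \leq B \leq P_{J_0}$ and $K$ is normal in $G$). Consequently $(B\Omega_x W_{J_0} B)/K$ is the finite union $\bigcup_{\vu \in \Omega_x} (\dot{\vu} K)(P_{J_0}/K)$ of translates of the compact set $P_{J_0}/K$ inside the topological group $G/K$, hence compact. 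Conjugation by $\vg_0$ descends to a homeomorphism of $G/K$, so $G_x/K$ is compact.

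The main obstacle, I expect, will be recognising that the hypothesis $K \subseteq B$ in property (4) is calibrated exactly to identify the BN-pair $(KB,N)$ of Lemma~\ref{finite}(6) with the given $(B,N)$; this is the sole route to the finiteness of the constituent group $\Omega$, which is in turn essential for controlling $\Omega_x$. Once this identification is in place, the remaining work is a careful assembly of the BN-pair axioms with properties (2)--(5).
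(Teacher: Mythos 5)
Your proof is correct and follows essentially the same route as the paper: both rely on Lemma~\ref{D_stab} for the explicit description of $G_x$, derive finiteness of $\Omega$ from properties (2) and (4) via Lemma~\ref{finite}(6), and deduce openness from $B$ being open and compactness modulo $K$ from property (5). The only cosmetic difference is in the final compactness step: the paper writes $G_x$ as a finite union of double cosets $B\dot{\vw}B$ and observes each $(B\dot{\vw}B)/K$ is a continuous image of $B/K\times B/K$ (with $B/K$ compact by property (5) applied to $J=\emptyset$), whereas you appeal to compactness of $P_{J_0}/K$ directly and translate it by the finitely many $\dot{\vu}$, which is slightly more economical and also makes the role of the outer conjugation by $\vg_0$ explicit.
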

\begin{proof}
The continuity of action is equivalent to all stabilisers $G_x$ being open.
This follows from Lemma~\ref{D_stab} and $B$ being open.
  
Since $B$ contains the kernel $K$, $G_1=G_0$ by Lemma~\ref{finite}.
Moreover, the subgroup $\Omega$ is finite. As $\sD_\bullet$ incorporates only spherical parabolic subgroups
of $G_0$, each stabiliser $G_x$ is union of finitely many double cosets $B\dot{\vw} B$.
Since $K$ is normal, $(B\dot{\vw} B)/K$ is the quotient topological space of $B/K\times B/K$.
Thus, each double coset $B\dot{\vw} B$ is compact modulo $K$ and so is $G_x$.
\end{proof}

The fundamental theorem of Davis is that if $S$ is finite, then $|\sD|$ is a CAT$(0)$ geodesic  space with
a piecewise Euclidean structure \cite{Davis2}. In particular, it is imperative for us 
that $|\sD|$ is contractible.
All conditions of Theorem~\ref{mainthm} and Theorem~\ref{local}
are satisfied.
Let us formulate them as a corollary.
Observe that the kernel $K$ contains any central subgroup,
so the condition $A\subseteq K$ holds automatically.
Observe also that $B/A$ is compact if and only if $K/A$ is compact.

\begin{cor}
\label{loc_KMgroup}
  Let $G$ be a topological group of Kac-Moody type, $A$ its central closed
  subgroup such that $B/A$ is compact. 
The localisation functor for the category of $A$-semisimple
  $G$-representations over a field $\F$
$$
\sM_A(G)
\xrightarrow{\cong} 
\ChD[\Sigma_A^{-1}] 
$$
is an equivalence of categories.
If the field  $\F$ is 
$G_{x}/A$-ordinary for any $x\in \sD_\bullet$, then
$$\quad \pd(\M_{A}(G)) \leq 
\underset{J \in \S}{\sup} |J|$$
where $|J|$ denotes the cardinality of $J$.
\end{cor}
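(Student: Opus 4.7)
The plan is to deduce the corollary by verifying that the Davis building $\sD_\bullet$ of a topological group $G$ of Kac-Moody type satisfies all the hypotheses of Theorem~\ref{local} (for the first assertion) and of Theorem~\ref{mainthm} (for the second). The only genuine computation needed is the determination of $\dim \sD_\bullet$; everything else is a chain of citations to earlier results in the paper.

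First, I would assemble the basic geometric/topological facts. By Theorem~\ref{open}, $G$ acts continuously on $\sD_\bullet$ with every stabiliser $G_x$ compact modulo the kernel $K$ of the action on $\sB\sT$. Since $A$ is central, it fixes every chamber and thus $A\subseteq K$; in particular $A$ acts trivially on $\sD_\bullet$. The hypothesis that $B/A$ is compact, combined with $K\subseteq B$ (property (4) of a topological group of Kac-Moody type) and $K$ being closed, shows that $K/A$ is a closed subgroup of the compact group $B/A$, hence compact. Therefore $G_x/A$, being an extension of the compact $G_x/K$ by the compact $K/A$, is compact. By Davis' Theorem \cite{Davis2}, $|\sD|$ carries a CAT(0)-metric and is in particular contractible, hence connected.

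Next, I would apply Theorem~\ref{local}(7) with $\sX_\bullet=\sD_\bullet$: all its hypotheses are exactly the items verified above, and it delivers the equivalence $\sM_A(G)\xrightarrow{\cong} \ChD[\Sigma_A^{-1}]$, giving the first assertion. For the second assertion, under the additional $G_x/A$-ordinarity hypothesis on $\F$, all conditions of Theorem~\ref{mainthm} hold, so
\[
\pd(\M_A(G)) \leq \dim \sD_\bullet .
\]

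It remains to compute $\dim \sD_\bullet$. By construction a non-degenerate $n$-simplex of $\sD_\bullet$ is a strict chain
\[
\vg_0 P_{J_0} \subsetneq \vg_1 P_{J_1} \subsetneq \cdots \subsetneq \vg_n P_{J_n},\qquad J_i\in\S .
\]
Strict inclusion of cosets forces $P_{J_i}\subsetneq P_{J_{i+1}}$ and hence $J_i\subsetneq J_{i+1}$, so the length of such a chain is bounded by the maximum length of a strictly increasing chain of spherical subsets of $S$; that maximum is $\sup_{J\in\S}|J|$, realised by $\emptyset\subsetneq\{s_1\}\subsetneq\cdots\subsetneq J$ for a spherical $J$ of maximum size (together with an arbitrary compatible choice of cosets). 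Thus $\dim\sD_\bullet=\sup_{J\in\S}|J|$, and combining this with the bound from Theorem~\ref{mainthm} completes the proof.

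The most delicate step, and the only one that is not pure bookkeeping, is the strictness argument in the dimension computation: one must notice that distinct cosets of the \emph{same} parabolic $P_{J}$ are never comparable under inclusion, so a strict chain of cosets really does force a strict chain of types $J_i$. Once that is observed, both halves of the corollary are immediate from the theorems already proved.
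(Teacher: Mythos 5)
Your proof is correct and follows the same route the paper intends: verify the hypotheses of Theorems~\ref{mainthm} and~\ref{local} for $\sX_\bullet=\sD_\bullet$ by combining Theorem~\ref{open} (continuity, compactness of stabilisers modulo $K$), the observations that $A\subseteq K\subseteq B$ and hence $K/A$ and $G_x/A$ are compact, and Davis' CAT(0) theorem for contractibility. The only detail the paper leaves implicit that you make explicit is the computation $\dim\sD_\bullet=\sup_{J\in\S}|J|$; your strictness observation (that $\vg_0P_{J_0}\subsetneq \vg_1P_{J_1}$ forces $J_0\subsetneq J_1$, since equality of types with $\vg_0^{-1}\vg_1\in P_{J_1}$ already gives equality of cosets) is exactly what justifies it, and the maximal chain $\emptyset\subsetneq\{s_1\}\subsetneq\cdots\subsetneq J$ (valid because subsets of spherical sets are spherical) attains the bound.
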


We finish this section with another observation about the class
of groups we have introduced.
\begin{thm}\label{unimod}
A topological group of Kac-Moody type $G$ with compact $B$ is unimodular.
\end{thm}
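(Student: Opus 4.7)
The plan is to apply the unimodularity criterion (Proposition~\ref{unimod_cr}) with $H=B$ and to verify $\Delta\equiv 1$ on a generating set of $G$. Since $B$ is compact open, every conjugate $\vx B\vx^{-1}$ is compact open, so $B\cap\vx B\vx^{-1}$ and $B\cap\vx^{-1}B\vx$ are compact open subgroups of the compact group $B$, hence of finite index; therefore the criterion applies to every $\vx\in G$. By axiom (vi) of a generalised BN-pair, $G=\langle B,N\rangle$; combining this with $N/H=\Om\ltimes W$, $W=\langle S\rangle$, and $H=B\cap N\subseteq B$, it suffices to check triviality of $\Delta$ on $B$ and on lifts $\dot{\omega}$ of each $\omega\in\Om$ and $\dot{\vs}$ of each $\vs\in S$.

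For $B$: compactness forces $\Delta(B)$ to be a compact subgroup of $\R^{\times}_{>0}$, hence $\Delta(B)=\{1\}$. For $\dot{\omega}$: axiom (v) gives $\dot{\omega}B\dot{\omega}^{-1}=B=\dot{\omega}^{-1}B\dot{\omega}$, so both indices in Proposition~\ref{unimod_cr} equal $1$ and $\Delta(\dot{\omega})=1$ follows. For $\dot{\vs}$: the Coxeter relation $\vs^{2}=1$ in $W$ forces $\dot{\vs}^{\,2}\in H\subseteq B$, so $\dot{\vs}^{-1}=\dot{\vs}\cdot\dot{\vs}^{-2}$ differs from $\dot{\vs}$ by an element of $B$. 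Using that $B$ is closed under conjugation by its own elements,
$$
\dot{\vs}^{-1}B\dot{\vs}=\dot{\vs}\,(\dot{\vs}^{-2}B\dot{\vs}^{\,2})\,\dot{\vs}^{-1}=\dot{\vs}B\dot{\vs}^{-1}.
$$
Consequently the two indices in Proposition~\ref{unimod_cr} coincide, and $\Delta(\dot{\vs})=1$.

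Combining the three computations, $\Delta$ vanishes on the chosen generating set and hence on all of $G$, so $G$ is unimodular. The only non-routine step will be the manipulation for $\dot{\vs}$, where the conclusion hinges on $\dot{\vs}^{\,2}$ lying in $H\subseteq B$; everything else is immediate from the structural axioms. The compactness of $B$ plays a triple role here: it guarantees the finiteness of indices required to apply the criterion, forces $\Delta$ to vanish on $B$ itself, and (through the containment $H\subseteq B$) supplies the identity $\dot{\vs}^{-2}B\dot{\vs}^{\,2}=B$ underlying the final calculation.
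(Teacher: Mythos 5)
Your proposal is correct and follows essentially the same route as the paper: compute $\Delta$ via Proposition~\ref{unimod_cr} with $H=B$ on the generators $B$, $\dot{S}$, $\dot{\Omega}$ of $G$. The only difference is that you spell out the justification for the identity $\dot{\vs}^{-1}B\dot{\vs}=\dot{\vs}B\dot{\vs}^{-1}$ (via $\dot{\vs}^{2}\in H\subseteq B$), which the paper states without proof, and you argue $\Delta|_B\equiv 1$ from compactness of the image rather than reading off both indices as $1$ directly from the criterion — both are immediate.
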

\begin{proof}
We can use the compact open subgroup $B$ in Proposition~\ref{unimod_cr} to compute the modular function.
In particular, $\Delta (\vx)=1$ for all $\vx\in B$.
Part (v) of the definition of a generalised BN-pair 
ensures that $\Delta (\dot{\va})=1$ for all $\va\in\Omega$. 
If $\vs\in S$, then $\dot{\vs}^{-1}B\dot{\vs} = \dot{\vs}B\dot{\vs}^{-1}$, so again $\Delta (\dot{\vs})=1$.
  
The theorem follows because $B$, $\dot{S}$ and $\dot{\Omega}$ generate $G$.
\end{proof}

\section{Topological Kac-Moody Groups}

There are several versions of 
complete Kac-Moody groups in the
literature. 
The groups described in Kumar's book \cite{Kum}
are ind-algebraic.
They are not locally compact, so of little relevance to  our
investigation.
There are several locally compact Kac-Moody groups including 
{\em Caprace--R\'{e}my--Ronan groups}, 
{\em Carbone--Garland--Rousseau groups}
and 
{\em Kumar--Mathieu--Rousseau groups}. 
A good review of various relevant complete Kac-Moody groups
can be found in Marquis' thesis \cite{Mar}.
A  paper by Capdeboscq and Rumynin \cite{CaRu} 
contains a general approach to these groups including
a construction of a new class of
{\em locally pro-$p$-completed groups}.

Let $\sA=(\alpha_{i,j})_{n\times n}$ be a generalised Cartan matrix,
$(W,S)$ its Weyl group, 
$\mD=(X,Y,\Pi,\Pi^\vee )$  a root datum of type $\sA$.
Following Carter and Chen \cite{CaCh} 
we can define a Kac-Moody group
${G}_\mD (\K)$
over 
a field $\K$.
The topological Kac-Moody is a certain
completion $\widehat{{G}_\mD (\K)}$.
We refer the reader to \cite{CaRu} (also cf. \cite{Mar})
for further details. 
If the field $\K$ is finite,
the group $\widehat{{G}_\mD (\K)}$ can be locally compact.
It acts on a building of type $(W,S)$.
The kernel of this action  $K$ is central for some completions;
for some other completions very little is known about $K$.
By choosing an appropriate subgroup $K_0\leq K$,
we can derive examples of topological groups of Kac-Moody type
in the form $G\coloneqq\widehat{{G}_\mD (\K)}/K_0$.
The following proposition summarises what we know about their
representations from 
Corollary~\ref{loc_KMgroup}: 

\begin{prop}
  Let $G=\widehat{{G}_\mD (\K)}/K_0$ be a topological group of Kac-Moody type
  derived as described in this section, $A$ its central closed
  subgroup such that $B/A$ is compact.
  The localisation functor for the category of $A$-semisimple
  $G$-representations over a field $\F$
$$
\sM_A(G)
\xrightarrow{\cong}
\ChD[\Sigma_A^{-1}] 
$$
is an equivalence of categories.
If the field  $\F$ is 
$P_{J}/A$-ordinary for any spherical $J\subseteq S$, then 
$$\quad \pd(\M_{A}(G)) \leq 
\underset{J \in \S}{\sup} |J| = f(\sA),$$
where $f(\sA)$ is the maximal size of the diagonal minor of $\sA$ of finite type.
\end{prop}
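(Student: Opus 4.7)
The plan is to reduce everything to Corollary~\ref{loc_KMgroup} by verifying that $G=\widehat{G_\mD(\K)}/K_0$ is a topological group of Kac-Moody type in the sense of Section~\ref{real}, and then translating the hypothesis on $P_J/A$ into the corresponding hypothesis on $G_x/A$ that is required by the corollary.

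First I would invoke \cite{CaRu} to certify that, for a suitable choice of $K_0\leq K$, the quotient $G=\widehat{G_\mD(\K)}/K_0$ comes equipped with a generalised BN-pair $(B,N,S)$ satisfying the five axioms listed before Theorem~\ref{open}: local compactness and total disconnectedness, finiteness of $S$ (since $S$ is the generator set of $W$ coming from the generalised Cartan matrix $\sA$), openness of $B$, containment $K_0\sset K\sset B$, and compactness of $P_J/K$ for each spherical $J$. Once this is in place, the equivalence $\sM_A(G)\xrightarrow{\cong}\ChD[\Sigma_A^{-1}]$ is immediate from the first half of Corollary~\ref{loc_KMgroup}.

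The projective-dimension half requires the hypothesis ``$\F$ is $G_x/A$-ordinary for every $x\in\sD_\bullet$'', whereas we are only given that $\F$ is $P_J/A$-ordinary for every spherical $J$. I would bridge this gap using Lemma~\ref{D_stab}: every stabiliser has the form $G_x=\vg_0 B\Omega_x W_{J_0} B\vg_0^{-1}$ with $\Omega_x\leq\Omega_1$, where $\Omega_1$ is finite by Lemma~\ref{finite}(6) (it embeds into $\mathrm{Sym}(S)$). Thus modulo $A$ the group $G_x/A$ is a finite union of double cosets of $B/A$ inside a finite extension of the conjugate of $P_{J_0}$. Since $|G_x/A|=|\Omega_x|\cdot|P_{J_0}/A|$ as supernatural numbers, and since being $\F$-ordinary is preserved under finite extensions whose order divides $|\Omega|$ (finite, of order dividing $|S|!$), the assumption that $\F$ is $P_J/A$-ordinary for every spherical $J$ transfers to $G_x/A$-ordinarity for every simplex $x\in\sD_\bullet$, as needed.

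At that point Corollary~\ref{loc_KMgroup} yields
\[
\pd(\M_A(G))\leq \sup_{J\in\S}|J|,
\]
and it remains only to identify this supremum with $f(\sA)$. By definition a subset $J\sset S$ is spherical precisely when the Coxeter subgroup $W_J$ is finite, which in Kac-Moody terms is equivalent to the principal submatrix $\sA_J$ being a Cartan matrix of finite type. Hence $\sup_{J\in\S}|J|$ is the maximal cardinality of a $J\sset S$ whose corresponding diagonal minor of $\sA$ is of finite type, which is exactly $f(\sA)$. The only genuinely subtle step is the ordinarity transfer in the preceding paragraph; the rest is a direct citation of the general theorem and a dictionary between spherical subsets of $(W,S)$ and finite-type principal submatrices of $\sA$.
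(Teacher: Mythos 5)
Your overall strategy matches the paper's: the paper offers no separate proof here, explicitly presenting the proposition as a summary of Corollary~\ref{loc_KMgroup} applied to $G=\widehat{G_\mD(\K)}/K_0$, with the verification that such a $G$ is a topological group of Kac-Moody type delegated to \cite{CaRu}. Your identification of $\sup_{J\in\S}|J|$ with $f(\sA)$ via the correspondence between spherical subsets of $(W,S)$ and finite-type principal submatrices of $\sA$ is the right dictionary and is what the paper intends.

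The one place where your argument does not hold water is the transfer of ordinarity from $P_J/A$ to $G_x/A$. You correctly observe via Lemma~\ref{D_stab} that $G_x=\vg_0 B\Omega_x W_{J_0}B\vg_0^{-1}$ contains a conjugate of $P_{J_0}$ with finite index $|\Omega_x|$, so $|G_x/A|=|\Omega_x|\cdot|P_{J_0}/A|$ as supernatural numbers. But the inference ``$\F$ is $P_{J_0}/A$-ordinary $\Rightarrow$ $\F$ is $G_x/A$-ordinary'' requires in addition that $\Char\F$ not divide $|\Omega_x|$, and nothing in the stated hypotheses guarantees this. The phrase ``being $\F$-ordinary is preserved under finite extensions whose order divides $|\Omega|$'' is precisely the claim that fails if $\Char\F$ happens to divide $|\Omega_x|$. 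What actually closes this gap, and what the paper is implicitly relying on, is that for the Kac-Moody groups constructed in this section one takes $K_0\leq B$ so that $(B,N,S)$ is an honest (not generalised) BN-pair for $G$, i.e.\ $\Omega$ is trivial; then $\Omega_x=1$, $G_x$ is exactly a conjugate of $P_{J_0}$, and the two ordinarity hypotheses coincide with no transfer needed. If one wishes to allow nontrivial $\Omega$, the proposition as stated needs the extra hypothesis $\Char\F\nmid|\Omega|$, and you should say so rather than assert the transfer unconditionally.
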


Let us call a generalised Cartan matrix $\sA$
{\em generic} if $f (\sA)=1$.
Thus, for a generic $\sA$, we obtain hereditary abelian categories.
It would be interesting to investigate them further.

Another direction for further research is Schneider-Stuhler
resolutions in $\M_{A}(G)$ for topological Kac-Moody groups.
We are going to address them in consequent papers.

\section{Homological Duality}

We start with a locally compact totally disconnected group $G$
and its closed central subgroup $A$.
We make no restriction on $\F$ for now.

We consider one of the derived categories
$D^\star(\sM(G))$ where $\star\in\{\mbox{``empty''},-,+,b\}$.
We have been working with chain complexes previously,
but we feel obliged to switch to cochain complexes
at this point to follow standard conventions.
Let us consider  a full subcategory $D^\star (\sM(G))_{A,\chi}$
for each character $\chi$ of $A$.
It consists of cochain complexes
$M^\bullet = (M^n,d^n)$ such that for all $\va\in A$
we have an equality
$\va-\chi(\va)=0$ in $\Hom (M^\bullet, M^\bullet)$.
This enables us to define a full subcategory
$D^\star (\sM(G))_A \coloneqq \oplus_\chi D^\star (\sM(G))_{A,\chi}$
consisting of $A$-semisimple complexes. 
There are two further related categories:
a full subcategory $D^\star_A(\sM(G))$ of complexes
with $A$-semisimple cohomology
and
$D^\star (\sM_A(G))$. 
The natural functors
$D^\star(\sM_A(G))\rightarrow D^\star(\sM(G))_A$
and 
$D^\star (\sM(G))_A\rightarrow D^\star_A(\sM(G))$
are not equivalences, in general.
It is a moot point when they are
(cf. \cite[Exercises in III.2]{GeMa}).

Let $B^\bullet$ be a complex of 
$G$-$G$-bimodules, smooth as both
left and right $G$-modules
such that the left and the right actions of $A$ on $B^\bullet$ coincide. 
We denote these actions on $B^n$ by $\,^\vg b$ and $b^\vg$.
The bimodule $B$ defines ``a dual module'' for each $M^\bullet \in D(\sM(G))$
by
$$
\nabla (M^\bullet) = \nabla_{B^\bullet}(M^\bullet) \coloneqq \Hom (M^\bullet, B^\bullet), \ \ 
[\vg\cdot \varphi] (m) \coloneqq (\varphi (m))^{\vg^{-1}}.
$$
Observe that the functor $\nabla$ preserves $D(\sM(G))_A$
because 
the image of $\varphi$ necessarily takes values in the $A$-socle of $B^\bullet$.
In fact, $\nabla$ takes $D(\sM(G))_{A,\chi}$ to $D(\sM(G))_{A,\chi^{-1}}$. 
The preservation of other categories depends on $B^\bullet$.
We say that $B^\bullet$ is {\em dualising} if
$\nabla$ restricts to a self-equivalence
$D^b(\sM^{f.g.}(G))_A\rightarrow D^b(\sM^{f.g.}(G))_A$
of the derived categories of finitely generated modules
and $\nabla^2$ is naturally isomorphic to $\mbox{Id}_{D^b(\sM^{f.g.}(G))_A}$.

It would be extremely interesting to develop a theory of dualising complexes
in our generality in the spirit of Hartshorne \cite{Har}
and, in particular, characterise the dualising complexes
as done for rings by Yekutieli \cite[Def. 4.1]{Yek}.
\begin{prop} \label{dual_H}
  (cf. \cite[Th. 31]{Bern})
  Suppose that the field $\F$ is $K$-ordinary
  for a compact open subgroup $K$ of $G$.
  Then the Hecke algebra
  $\sH=\sH(G,\F,\mu_K)$
  is a dualising bimodule.
\end{prop}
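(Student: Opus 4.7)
The plan is to use the equivalence $\sF:\M(G)\xrightarrow{\cong}\M(\sH)$ from Proposition~\ref{equiv1} to translate the claim into a statement about $\Hom_{\sH}(-,\sH)$ on the derived category of Hecke-algebra modules, where the standard machinery of dualising complexes over (non-unital) rings applies. Under this translation the $G$-$G$-bimodule $\sH$ corresponds to $\sH$ viewed as a bimodule over itself, the functor $\nabla$ becomes $\Hom_\sH(-,\sH)$ with the canonical left $\sH$-module structure induced from the right bimodule action on the target, and biduality becomes the evaluation map $M\to\Hom_\sH(\Hom_\sH(M,\sH),\sH)$. The character-flipping observation $\nabla(D(\sM(G))_{A,\chi})\subseteq D(\sM(G))_{A,\chi^{-1}}$ already noted in the excerpt takes care of the $A$-semisimplicity condition, since $A$ is central and hence its left and right actions on $\sH$ coincide.

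First I would compute the dual on the distinguished projective generators. For each compact open subgroup $K$ with $\F$ being $K$-ordinary, there is a Yoneda-type identity $\Hom_\sH(\sH\star\Lambda_K,\sH)\cong\Lambda_K\star\sH$, with the left $\sH$-module structure on the right-hand side coming from the right bimodule action. Passed back through $\sF^{-1}$ this is still finitely generated, so $\nabla$ sends the generating class of finitely generated projectives into $D^b(\sM^{f.g.}(G))_A$, with $\chi$ flipped to $\chi^{-1}$. The biduality map is an isomorphism on each generator $\sH\star\Lambda_K$ by an approximation-by-idempotents calculation using the identity $\Lambda_K\star\Lambda_K=\Lambda_K$ and the fact that the $\Lambda_K$'s form an approximate identity.

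Next I would leverage finite projective dimension. Corollary~\ref{enough_pro} supplies enough projectives in $\sM_A(G)$, and noetherianity of $\sM(G)$ (noted earlier in the paper) guarantees that each finitely generated object admits a projective resolution by finitely generated projectives; in the intended Kac-Moody setting Theorem~\ref{mainthm} gives the essential boundedness of its length. Applying $\nabla$ term by term sends such a resolution to a bounded complex of finitely generated projectives in the $\chi^{-1}$-component, so $\nabla$ preserves $D^b(\sM^{f.g.}(G))_A$. A way-out-functor argument then upgrades the isomorphism $\mathrm{Id}\xrightarrow{\cong}\nabla^2$ from the generators to the whole bounded derived category: a natural transformation of $\partial$-functors that is an isomorphism on a projective generating class is an isomorphism everywhere.

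The main obstacle, where I expect the real work to lie, is the non-unitality of $\sH$ and the subtlety it introduces in the internal $\Hom$: the plain $\Hom_\sH(M,\sH)$ need not itself be a smooth $\sH$-module, so one must systematically replace it by its smooth part $\varinjlim_K \Lambda_K\star\Hom_\sH(M,\sH)$. Checking that this smoothing operation commutes with finite generation, with passage to bounded derived categories, and with the $A$-character decomposition is the delicate step. Bernstein handles this for $p$-adic reductive groups by decomposing along Bernstein components; in our generality the same strategy should succeed block by block under the central character $\chi$, since $G/A$ remains locally compact totally disconnected and inherits a basis of compact open subgroups from the quotients of $B$, which is precisely what is needed to reduce the smoothing to a finite colimit at each stage of the resolution.
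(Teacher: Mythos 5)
Your proposal shares the paper's core strategy: pass to $\sH$-modules via Proposition~\ref{equiv1}, compute the dual on projectives of the form $\sH e$ (your $\sH\star\Lambda_K$ is the same thing), and use $\Hom_\sH(\sH e,\sH)\cong e\sH$ to get biduality on generators. The execution, however, differs in two ways that are worth pointing out. First, the paper does not use a way-out-functor or ``extend-from-generators'' argument at all: it replaces $M^\bullet$ by a bounded-above complex $P^\bullet$ of finite direct sums of $\sH e$, identifies $\nabla^2(P^n)\cong P^n$ term by term, and then—this is the genuine content—verifies the identification is compatible with the differentials by writing each matrix entry as $d^n_{e,f}=e\,\Theta_{e,f}\,f$ and checking directly that $\nabla^2(e\,\Theta_{e,f}\,f)=e\,\Theta_{e,f}\,f$. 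This gives $\nabla^2(P^\bullet)=P^\bullet$ on the nose, so nothing needs to be ``upgraded''; the $\partial$-functor phrasing you use is also not the right tool for a functor between triangulated categories. Second, the obstacle you flag as ``where the real work lies''—that $\Hom_\sH(M,\sH)$ need not be smooth—is sidestepped, not confronted, by the paper's choice of resolution: for $M=\sH e$ the dual $e\sH$ is automatically smooth under the residual left $G$-action, so no smoothing step appears. You are right, on the other hand, that for $\nabla$ to land back in $D^b(\sM^{f.g.}(G))_A$ one needs some finiteness control; the paper's proof resolves in $K^-(\sM(G))$ and is silent on why $\nabla$ preserves boundedness, so your appeal to finite projective dimension (Theorem~\ref{mainthm}) is a reasonable supplement, even if it narrows the generality.
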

\begin{proof}
Thanks to Proposition~\ref{equiv1} we are dealing
with modules over the idempotented algebra $\sH$.
An object $M^\bullet \in D^b(\sM^{f.g.}(G))_A$
admits a projective resolution
$P^\bullet = (P^n,d^n) \cong M^\bullet$
in $K^-(\sM(G))$, i.e., $P_n =0$ for $n \gg 0$.  
Each $P_n$ can be chosen to be a finite direct sum of $\sH e$ for various idempotents $e$.

We can compute $\nabla (M^\bullet)$ on this resolution.
The natural action of $G$ on $\nabla (M^\bullet)$ is the
right actions $[\varphi\leftharpoonup \vg] (m) \coloneqq (\varphi (m))^{\vg}$
that we turn into the left action using the inverses.
Let us not do it so that we can treat $\nabla (P^\bullet)$
as a complex of right $\sH$-modules. In particular, we can use the natural
isomorphism
$$
\nabla (\sH e) = \Hom_\sH (\sH e, \sH) \cong e \sH, \ \ \
F \longleftrightarrow F(e)=eF(e)
$$
to construct the natural isomorphism of functors
$$
\nabla^2 \stackrel{\gamma}{\Longrightarrow} \mbox{Id}_{D^b(\sM^{f.g}(G))_A}, \ \ \
\nabla^2 (P^n) = \nabla^2 (\oplus_e \sH e) \xrightarrow{\cong}
\nabla (\oplus_e e\sH ) \xrightarrow{\cong}
\oplus_e \sH e = P^n.
$$
To show that it is well-defined
we need to compute what happens to differentials $d^n$.
Each differential is a matrix $(d_{e,f}^n)$ where
$d^n_{e,f}\in\Hom_\sH (\sH e, \sH f)$.
Using natural isomorphisms
$$
\Hom_\sH (\sH e, \sH f) \cong e \sH f, \
F \longleftrightarrow F(e)
\ \mbox{ and } \ \
\Hom_\sH (f \sH , e \sH) \cong e \sH f, \
F \longleftrightarrow F(f),  
$$
we can write each $d_{e,f}^n$
as $e \Theta_{e,f} f$ for some
$\Theta_{e,f} \in\sH$ that helps us to perform the key calculation:
$$
\nabla^2 \big( (d_{e,f}^n) \big)=
\nabla^2 \big( (e \Theta_{e,f} f) \big)=
\nabla \big( (e \Theta_{e,f} f) \big )=
(e \Theta_{e,f} f) =
(d_{e,f}^n).
$$
Naturality of the transformation $\gamma$ is apparent
after this calculation.
Finally, $\nabla$ is an equivalence because its quasi-inverse is itself.
\end{proof}

We would like to state the following conjecture.
It is known for $p$-adic reductive groups \cite[III.3]{SS2}.
It is obvious if the Davis building $\sD$ is a tree because
$\nabla_\sH (M)$ is necessarily quasiisomorphic
to the sum of its cohomologies as a consequence of projective
dimension one.
\begin{conj} \label{conj_H}
  Suppose we are under the assumptions of Proposition~\ref{dual_H}
  and $M\in \sM_A(G)$ is a simple module.
  Then $\nabla_\sH (M)$ is a complex with cohomologies in one degree.
\end{conj}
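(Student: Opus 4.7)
\textbf{The plan} is to imitate the Schneider--Stuhler argument \cite[III.3]{SS2}, with the Davis building $\sD_\bullet$ playing the role of $\sB\sT_\bullet$. Suppose we are in a setting where Conjecture~\ref{cat0thm} is available (in particular in dimension one by Theorem~\ref{tree}, or assuming the conjecture outright in general). Since $M$ is simple and smooth, there is an open compact subgroup $U\leq G$ with $M^{U}\neq 0$ generating $M$; by refining $U$ we may arrange $U=\sG_{x_{0}}$ to come from an exquisite geodesic system $\sG$ of subgroups of $G_\chi$ for which $\F$ is $\sG_x$-ordinary at every vertex. Conjecture~\ref{cat0thm} then furnishes a finite, finitely generated projective resolution of $M$ in $\M_{A,\chi}(G)$ by the chain complex $C_\bullet(\sD_\bullet,\utilde{\utilde{M^{\sG}}})$ of Proposition~\ref{chain_properties}.

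Next, apply $\nabla_\sH=\Hom_\sH(-,\sH)$ termwise. By Proposition~\ref{chain_properties}(2), each $C_{k}(\sD_\bullet,\utilde{\utilde{M^{\sG}}})$ is a direct sum $\bigoplus_{x} a{-}\ind_{G_{x}}^{G} M^{\sG_x}$ over $G$-orbit representatives of non-degenerate $k$-simplices; under the equivalence of Proposition~\ref{equiv1}, each summand is $\sH$-isomorphic to $\sH e$ for an idempotent $e$ assembled from $\L_{\sG_x}$ and an idempotent cutting out $M^{\sG_x}$ inside $M$. The identification $\Hom_\sH(\sH e,\sH)\cong e\sH$ used in the proof of Proposition~\ref{dual_H} then translates to a Frobenius-type isomorphism
$$\Hom_\sH\bigl(a{-}\ind_{G_{x}}^{G}M^{\sG_x},\,\sH\bigr)\;\cong\;\ind_{G_{x}}^{G}(M^{\sG_x})^{*}$$
up to the modular character of $G_{x}$, which is trivial by Theorem~\ref{unimod}. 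Consequently $\nabla_\sH(M)$ is quasi-isomorphic to the simplicial cochain complex $C^{\bullet}(\sD_\bullet,\sF_M)$, where $\sF_M$ is the $G$-equivariant sheaf on $\sD_\bullet$ with stalk $(M^{\sG_x})^{*}$ at $x$. Its cohomology computes the extensions $\Hom_\sH(M,\sH)$ and their higher analogues.

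It remains to show the cohomology of $C^{\bullet}(\sD_\bullet,\sF_M)$ is concentrated in a single degree $d(M)$. The space $|\sD|$ is contractible (Davis), so constant coefficients give cohomology only in degree $0$; for the non-constant $\sF_M$ one expects the nonvanishing stalks to be supported on a convex subcomplex $\sD_M\subseteq|\sD|$ measuring the depth of $M$, with cohomology concentrated in degree $\dim\sD_M$. \textbf{The main obstacle} is precisely this concentration step. For $p$-adic reductive groups Schneider--Stuhler pin down the support of $\sF_M$ using Bernstein's theorem on closedness of certain subcategories under subquotients \cite[Th.~I.3]{SS2}, a tool the authors explicitly flag as unavailable in the Kac--Moody setting. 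A substitute must therefore be geometric: a promising route is a Morse-theoretic filtration of $|\sD|$ by convex subcomplexes, combined with the idempotent commutation identities of Lemmas~\ref{prop_id} and~\ref{prop2_id}, reducing concentration to a local statement on CAT(0) stars where the exquisite system forces the relevant local cochain complex to be exact except in one degree.
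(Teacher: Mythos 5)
This statement is a \emph{conjecture} in the paper; the authors do not prove it. They merely remark that it is known for $p$-adic reductive groups (Schneider--Stuhler) and that it is immediate in the tree case because $\pd\leq 1$ forces $\nabla_\sH(M)$ to be quasi-isomorphic to the direct sum of its cohomologies, whence simplicity of $M$ together with $\nabla_\sH^2\cong\mathrm{Id}$ rules out two nonzero cohomology groups. There is therefore no proof in the paper to compare against, and your submission, quite rightly, is a plan with a flagged gap rather than a proof.

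Your plan follows the Schneider--Stuhler route that the paper itself points to, and you correctly locate the central obstruction: the concentration-in-one-degree step, for which the $p$-adic argument uses Bernstein's closure-under-subquotients theorem, explicitly unavailable here. That is the right assessment, and your suggested substitute (a Morse-theoretic filtration by convex subcomplexes plus the idempotent identities of Lemmas~\ref{prop_id} and~\ref{prop2_id}) is a reasonable direction. However, several further gaps are quietly elided that you should be aware of before pursuing this. First, the plan presupposes Conjecture~\ref{cat0thm}, proved in the paper only in dimension one; in higher dimensions you are conditioning a conjecture on a conjecture. Second, the step ``by refining $U$ we may arrange $U=\sG_{x_0}$ to come from an exquisite geodesic system'' is not established for Davis buildings — constructing such systems adapted to a given $M$ is itself an open problem (the paper poses it implicitly in Section~5). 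Third, the identification $\Hom_\sH(a\text{-}\ind_{G_x}^G M^{\sG_x},\sH)\cong\ind_{G_x}^G(M^{\sG_x})^*$ is a second-adjointness/Frobenius-duality statement which is a theorem of Bernstein in the reductive $p$-adic case but is unproven for topological Kac--Moody groups; it cannot be waved through with Proposition~\ref{dual_H} alone, which only handles the idempotented-algebra identity $\Hom_\sH(\sH e,\sH)\cong e\sH$. Fourth, finite generation of the chain complex requires admissibility of $M$, which is a genuine extra hypothesis in this generality. Finally, you do not mention the paper's one-line argument for the tree case (the $\pd\leq 1$ observation); it is worth recording because it is strictly simpler than running your cosheaf machinery even in dimension one.
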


If Conjecture~\ref{conj_H} holds, we can write
$\nabla_\sH (M) \cong M^\vee [d(M)]$
in the derived category. 
Both the module $M^\vee$ and the integer $d(M)$
are of exceptional interest. It is easy to show that
$M^\vee$ is also a simple module.
We finish the paper with a conjectural description
of the homologically dual module $M^\vee$ for topological groups of Kac-Moody type that
agrees with the known description for $p$-adic groups \cite{EvMi}.

Let $G$ be a topological group of Kac-Moody type as
defined in Section~\ref{real}.
We make additional assumptions for simplicity:
\begin{enumerate}
\item $B$ is compact,
\item $\F$ is a $B$-ordinary field (so we can choose $\mu=\mu_B$),
\item $(B,N,S)$ is a BN-pair on $G$,
\item $A$ is trivial.
\end{enumerate}
Assumption (1) can be achieved for an arbitrary topological group
of Kac-Moody group $G^\prime$ by replacing it with $G=G^\prime / K^\prime$
where $K^\prime$ is the kernel of $(\rho,M)\in\sM (A)$.
Assumption (3) can be achieved by restricting $(\rho,M)$ to $G_0$.

Let us denote $\sH( B\setminus G/B)$ 
the space of $\F$-valued compactly supported
$B$-bi-invariant functions on $G$.
This space is a  subalgebra of the Hecke algebra $\sH(G,\F,\mu)$.
For each element $\vw$ of the Weyl group $W$
we denote by $\Theta_\vw$  the delta-function of
the double coset $B \dot{\vw} B$, i.e.,
$\Theta_\vw (\vx)=1$ if $\vx\in B\dot{\vw}B$
and $\Theta_\vw (\vx)=0$ otherwise. Clearly, 
$\Theta_\vw, \, \vw \in W$ form an $\F$-basis of
{\em the spherical Hecke algebra} $\sH( B\setminus G /B)$.

We should relate the spherical Hecke algebra
to {\em the multiparameter Iwahori-Hecke algebra} $\H[q_\vs, q_\vs^{-1}]$ \cite{Geck}.
The formal variable $q_\vs,\,\vs \in S$ depends only on the $W$-conjugacy
class of $\vs$: we set $q_\vs=q_\vt$ if there exists $\vw\in W$
such that $\vs=\vw\vt\vw^{-1}$. 
Then $\H[q_\vs, q_\vs^{-1}]$ is a $\Z[q_\vs, q_\vs^{-1}]$-algebra
generated by elements $T_\vs$, for $\vs \in S$, which satisfy the following relations:
\begin{enumerate}
\item $T_{\vs} T_{\vt} T_{\vs} \ldots = T_{\vt} T_{\vs} T_{\vt} \ldots \ \ \ $
  for all $\vs \neq \vt \in S$
  with the element $\vs \vt$ of finite order
  where each side of the equality contains exactly $|\vs\vt|$ $T$-s.
\item $(T_\vs - q_\vs)(T_\vs +1)=0\ \ \ $ for all $\vs \in S$.
\end{enumerate}
The relation between these two algebras is summarised
in the following proposition, whose
proof is standard.
\begin{prop}
  The natural homomorphism
  $$\H[q_\vs, q_\vs^{-1}]\mathbin{ \underset{ \mathclap{ \Z[q_\vs, q_\vs^{-1}] } }{\otimes } } \F
  \longrightarrow \sH (B\setminus  G /B), \ \ \ 
  T_\vs \otimes 1 \mapsto \Theta_\vs, \ \ \
  q_\vs \mapsto |B:B\cap \dot{\vs} B \dot{\vs}^{-1}| \cdot 1_\F
  $$
  is an isomorphism of algebras.
\end{prop}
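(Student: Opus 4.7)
My strategy is the classical Iwahori--Matsumoto argument, adapted to the conventions of this paper. First, the Bruhat decomposition $G_0 = \bigsqcup_{\vw\in W} B\dot\vw B$, combined with the fact that $(KB,N) = (B,N)$ gives a genuine BN-pair for $G_0$ (Lemma~\ref{finite}), together with $G=G_0$ under our hypotheses, yields $G=\bigsqcup_{\vw\in W} B\dot\vw B$. Each $B\dot\vw B$ is a compact open subset of $G$, so $\Theta_\vw$ lies in $\sH(B\setminus G/B)$, and $\{\Theta_\vw : \vw\in W\}$ is an $\F$-basis. Note that $q_\vs \coloneqq |B:B\cap \dot\vs B\dot\vs^{-1}|$ coincides with the cardinality of $B\dot\vs B/B$, and this integer is invertible in $\F$ since it divides the supernatural order of the compact open subgroup $B$, which is $B$-ordinary.

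Second, I compute the convolution $\Theta_\vs \star \Theta_\vw$ for $\vs\in S$ and $\vw\in W$. Pick a (finite) decomposition $B\dot\vs B=\bigsqcup_{i=1}^{q_\vs}\vy_i B$ into right $B$-cosets. Using the definition of $\star$ and $\mu_B(B)=1$, a direct change of variables gives
\[
(\Theta_\vs\star\Theta_\vw)(\vg)=\sum_{i=1}^{q_\vs}\Theta_\vw(\vy_i^{-1}\vg).
\]
Axiom (iii.1) of the generalised BN-pair forces $B\dot\vs B\cdot B\dot\vw B\subseteq B\dot{\vs\vw}B\cup B\dot\vw B$, while a standard count using the two cases of the Bruhat order produces the fundamental Iwahori--Matsumoto formula:
\[
\Theta_\vs\star\Theta_\vw=\begin{cases}\Theta_{\vs\vw}, &\ell(\vs\vw)>\ell(\vw),\\ q_\vs\Theta_{\vs\vw}+(q_\vs-1)\Theta_\vw,&\ell(\vs\vw)<\ell(\vw).\end{cases}
\]
Specialising to $\vw=\vs$ gives the quadratic relation $\Theta_\vs^{\star 2}=q_\vs\Theta_e+(q_\vs-1)\Theta_\vs$, which is exactly $(\Theta_\vs-q_\vs\Theta_e)\star(\Theta_\vs+\Theta_e)=0$. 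Moreover, iterating the first case along a reduced expression $\vw=\vs_{i_1}\cdots\vs_{i_\ell}$ yields $\Theta_\vw=\Theta_{\vs_{i_1}}\star\cdots\star\Theta_{\vs_{i_\ell}}$; applying this to the two sides of a braid word $\vs\vt\vs\cdots = \vt\vs\vt\cdots$ of length $m_{\vs\vt}$ (both of which are reduced expressions for the same element), the braid relations $\Theta_\vs\star\Theta_\vt\star\Theta_\vs\cdots=\Theta_\vt\star\Theta_\vs\star\Theta_\vt\cdots$ follow. Also, $q_\vs$ depends only on the $W$-conjugacy class of $\vs$ because $\dot\vw\dot\vs\dot\vw^{-1}$ and $\dot\vt$ define the same double coset when $\vs,\vt$ are conjugate, so the parameter assignment $q_\vs\mapsto|B:B\cap\dot\vs B\dot\vs^{-1}|$ is well-defined.

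Third, the universal property of the multiparameter Iwahori--Hecke algebra provides a $\Z[q_\vs,q_\vs^{-1}]$-algebra homomorphism $T_\vs\mapsto\Theta_\vs$, $q_\vs\mapsto|B:B\cap\dot\vs B\dot\vs^{-1}|\cdot 1_\F$, well-defined thanks to the relations just verified. Tensoring over $\Z[q_\vs,q_\vs^{-1}]$ with $\F$ yields the claimed $\F$-algebra homomorphism $\varphi$. To show $\varphi$ is bijective, recall from Matsumoto's theorem that $\H[q_\vs,q_\vs^{-1}]$ has a $\Z[q_\vs,q_\vs^{-1}]$-basis $\{T_\vw : \vw\in W\}$ defined by $T_\vw\coloneqq T_{\vs_{i_1}}\cdots T_{\vs_{i_\ell}}$ for any reduced expression. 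Combining this with the braid-case formula, $\varphi(T_\vw\otimes 1)=\Theta_\vw$, so $\varphi$ sends an $\F$-basis to an $\F$-basis and is therefore an isomorphism of $\F$-algebras.

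The main technical obstacle is the coset-counting argument that produces the two-case convolution formula; the rest is bookkeeping. The computation is classical for BN-pairs but has to be checked under the present axioms of a generalised BN-pair, and one should verify carefully that the two cases of the Bruhat order give the correct multiplicities by intersecting $\bigsqcup_i\vy_i B$ with $\vg\cdot(B\dot\vw B)^{-1}$.
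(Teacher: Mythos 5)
Your proof is the classical Iwahori--Matsumoto argument, which is exactly what the paper intends by saying the proof is ``standard'' (the paper gives no details). The overall structure --- Bruhat decomposition, coset-counting to obtain the convolution formula, quadratic and braid relations, Matsumoto's basis to conclude bijectivity --- is correct and complete.

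One step is not quite right as stated. You justify that $q_\vs$ depends only on the $W$-conjugacy class of $\vs$ by observing that $\dot\vw\dot\vs\dot\vw^{-1}$ and $\dot\vt$ lie in the same double coset $B\dot\vt B$ when $\vt=\vw\vs\vw^{-1}$. That observation is true but does not yield $q_\vs=q_\vt$: one would also need $\dot\vw B\dot\vw^{-1}=B$, which generally fails, so conjugating $B\dot\vs B$ by $\dot\vw$ does not land you in $B\dot\vt B$. The correct (and still standard) route is to deduce it from the multiplicativity $q_{\vs\vw}=q_\vs q_\vw$ when $\ell(\vs\vw)=\ell(\vw)+1$, which you have already proved: if $\vs$ and $\vt$ are joined by an edge of odd label $m=m_{\vs\vt}$ in the Coxeter graph, the two reduced words of length $m$ for the longest element of $\langle \vs,\vt\rangle$ give $q_\vs^{(m+1)/2}q_\vt^{(m-1)/2}=q_\vt^{(m+1)/2}q_\vs^{(m-1)/2}$, hence $q_\vs=q_\vt$; the general case follows because conjugacy of simple reflections is generated by such odd-label edges. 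With that fix, the proof is complete.
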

Let us use this isomorphism to define a new involution.
Consider the antipode map on the Hecke algebra from Section~\ref{zero}: 
$$
\sigma : \sH \rightarrow \sH, \ \ 
\sigma (\Theta) (\vx) =    \Theta (\vx^{-1})
\ \mbox{ for all } \ \vx \in G.
$$
On the level of Iwahori-Hecke algebra the antipode is
a $\Z[q_\vs, q_\vs^{-1}]$-linear antihomomorphism of
$\H[q_\vs, q_\vs^{-1}]$ such that $\sigma (T_\vs) = T_\vs$.
We define {\em Iwahori-Matsumoto involution (antiinvolution)} 
as a $\Z[q_\vs, q_\vs^{-1}]$-linear homomorphism (antihomomorphism) of
$\H[q_\vs, q_\vs^{-1}]$ such that
$$
\iota_{IM} (T_\vs) = -q_\vs {T_\vs}^{-1} \ \ \ 
(\sigma_{IM} (T_\vs) = -q_\vs {T_\vs}^{-1} \ \ 
\mbox{ correspondingly}). 
$$
Observe that the four maps $\mbox{Id},\sigma, \sigma_{IM}, \iota_{IM}$ form a Klein four-group.
Now we use the functor $\sF$ and the idempotent $\Lambda_B$
(cf. Prop.~\ref{equiv1})
to formulate the final conjecture of our paper.
It is known for the $p$-adic reductive groups \cite{EvMi}.

\begin{conj} \label{conj_IM}
  Suppose we are under the additional assumptions (1)-(4) stated above
  and $M$ is a simple module in $\sM(G)^B$.
  Then $M^\vee$ is also a simple module in $\sM(G)^B$
  and the $\sH (B\setminus  G /B)$-modules
  $\Lambda_B \ast \sF (M)$ and
  $\Lambda_B \ast \sF (M^\vee)$ are twists of each other
  with respect to the Iwahori-Matsumoto involution $\iota_{IM}$.
\end{conj}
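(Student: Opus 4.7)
The plan is to compute $\nabla_\sH(M)$ explicitly via a Schneider--Stuhler-type resolution and then pass to $B$-invariants. First, under assumptions (1)--(4), the functor $M \mapsto M^B = \Lambda_B \ast \sF(M)$ restricts to an equivalence between $\sM(G)^B$ and the category of (right) $\sH(B\backslash G/B)$-modules: this follows formally from Proposition~\ref{equiv1} together with the idempotent $\Lambda_B \in \sH$, which identifies $\sH(B\backslash G/B)$ with $\Lambda_B \sH \Lambda_B$. Under this equivalence simple modules correspond to simple modules. So it suffices to identify the spherical Hecke module $(M^\vee)^B$ with $(M^B)^{\iota_{IM}}$, and to verify that the result is still generated by its $B$-invariants.

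Second, I would invoke Conjecture~\ref{cat0thm} to obtain a finite projective resolution $P^\bullet \to M$ in $\sM(G)$, with $P^k = C_k(\sD_\bullet, \VtI)$ for a geodesic exquisite system $\sG$ derived from conjugates of $B$ (so that $\sG_x = B$ at the fundamental vertex corresponding to $\emptyset \in \S$, and $\sG_x$ equals the appropriate spherical parabolic intersection at higher-dimensional simplices). By Proposition~\ref{chain_properties}(2), each $P^k$ is a finite direct sum of modules of the form $a-\ind_{G_x}^G V^{\sG_x}$. Applying $\nabla_\sH = \Hom_\sH(-, \sH)$ term-by-term, exactly as in the proof of Proposition~\ref{dual_H}, and then $\Lambda_B \ast (-)$ on the left, each term becomes a free rank-one right module over $\sH(B\backslash G/B)$, generated by the characteristic function $\Theta_{\vw_x}$ of the double coset attached to a minimal-length Weyl representative $\vw_x$ of $x$.

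Third, I would match the induced differentials against the Iwahori--Matsumoto substitution $T_\vs \mapsto -q_\vs T_\vs^{-1}$. The key combinatorial computation is that, at an edge of $\sD_\bullet$ fixed by a simple reflection $\vs$, the boundary matrix carries three ingredients: the sign $(-1)^{sign(R(\vg,x))}$ from Equation~(\ref{action_s3}), the normalization $1/\mu(\sG_x)$ packaged into the idempotents $\Lambda_x$ of Lemma~\ref{prop_id}, and the identification $q_\vs = |B : B\cap \dot{\vs}B\dot{\vs}^{-1}|$ coming from the isomorphism between the Iwahori--Hecke algebra and $\sH(B\backslash G/B)$. Assembling these constants should yield precisely the IM substitution. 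Assuming Conjecture~\ref{conj_H}, so that $\nabla_\sH(M)$ concentrates in a single cohomological degree, the surviving cohomology is then the IM-twisted $M^B$, whence $M^\vee$ belongs to $\sM(G)^B$ and is simple by the equivalence of the first paragraph.

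The principal obstacles are not the IM identification itself, which is a formal BN-pair calculation once the setup is in place, but rather the two conjectural inputs. Conjecture~\ref{cat0thm} is needed in dimension greater than one (Theorem~\ref{tree} handles only the tree case), and Conjecture~\ref{conj_H} is needed so that the derived dual sits in a single degree --- precisely the fact which in the $p$-adic setting is guaranteed by Bernstein's subquotient theorem (unavailable here, as discussed in the introduction). Progress on Conjecture~\ref{conj_IM} is therefore effectively contingent on resolving the two earlier conjectures for Davis buildings of Kac--Moody groups.
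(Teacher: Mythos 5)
This statement is posed in the paper as an open conjecture (Conjecture~\ref{conj_IM}); the paper offers no proof, only the remark that the analogous result is known for $p$-adic reductive groups via the Evens--Mirkovi\'{c} work. So there is no paper proof to compare against, and the right standard is whether your sketch is a sound and honest plan of attack. You correctly identify that the plan is doubly conditional on Conjectures~\ref{cat0thm} and~\ref{conj_H}, and you correctly flag the absence of Bernstein's subquotient theorem as the underlying obstacle; that part of your assessment matches the paper's own framing.

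Two points in your sketch need scrutiny beyond the acknowledged conjectural dependencies. First, the assertion that $M \mapsto \Lambda_B * \sF(M)$ restricts to an equivalence between $\sM(G)^B$ and $\sH(B\backslash G/B)$-modules does not ``follow formally'' from Proposition~\ref{equiv1}: it requires $\sM(G)^B$ to be closed under subquotients, which is exactly the Bernstein-type theorem the introduction says is unavailable here. For matching \emph{simple} modules with $B$-fixed vectors to simple $\sH(B\backslash G/B)$-modules, the standard idempotent lemma is enough, so that part is fine; but you then invoke the purported equivalence to deduce that $M^\vee$ again lies in $\sM(G)^B$, and that step is not justified by what you have written --- it is effectively a third conjectural input. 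Second, the claim that $\Lambda_B * \nabla_\sH(P^k)$ is free of rank one over $\sH(B\backslash G/B)$ is too optimistic for $k>0$: the chain modules $C_k(\sD_\bullet,\VtI)$ are algebraically induced from parahoric stabilisers $G_x$ strictly larger than $B$, so after dualising and passing to Iwahori-bi-invariants one obtains $\sH(B\backslash G/B)$-modules induced from parabolic subalgebras $\sH_J \subseteq \sH(B\backslash G/B)$, not free rank-one modules. The combinatorial matching of boundary maps against the Iwahori--Matsumoto substitution in your third paragraph would have to be redone against those induced modules, which is where the real work lies. Neither point is necessarily fatal, but as written the sketch glides over precisely the places where the $p$-adic argument uses tools this paper says are not yet available.
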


\end{document}